\newcommand{\ignorecontentsline}[3]{}
\newtheorem{thm}{Theorem}[section]
\newtheorem{cor}[thm]{Corollary}
\newtheorem{lem}[thm]{Lemma}
\newtheorem{prop}[thm]{Proposition}
\newtheorem{conj}[thm]{Conjecture}
\newtheorem{fact}[thm]{Fact}
\theoremstyle{definition}
\newtheorem{defn}[thm]{Definition}
\theoremstyle{remark}
\newtheorem{rem}[thm]{Remark}
\newtheorem{ex}[thm]{Example}
\newtheorem{que}[thm]{Question}
\newcommand{\Aut}{\textnormal{Aut}}
\newcommand{\slsl}{\rightthreetimes}
\newcommand{\epi}{\twoheadrightarrow}
\newcommand{\N}{\mathbf{N}}
\newcommand{\K}{\mathbf{K}}
\newcommand{\Z}{\mathbf{Z}}
\newcommand{\R}{\mathbf{R}}
\newcommand{\Q}{\mathbf{Q}}
\newcommand{\FC}{\mathsf{B}}
\newcommand{\WW}{\mathsf{W}}
\begin{document}

\address{CNRS and Univ Lyon, Univ Claude Bernard Lyon 1, Institut Camille Jordan, 43 blvd. du 11 novembre 1918, F-69622 Villeurbanne}

\email{cornulier@math.univ-lyon1.fr}
\subjclass[2010]{Primary 20E22, Secondary 20F69, 22D05, 22D10}

\title{Locally compact wreath products}
\author{Yves Cornulier}%
\date{March 24, 2018}
\thanks{Supported by ANR 12-BS01-0003-01 GDSous}

\begin{abstract}
Wreath products of non-discrete locally compact groups are usually not locally compact groups, nor even topological groups. We introduce a natural extension of the wreath product construction to the setting of locally compact groups.

As an application, we disprove a conjecture of Trofimov, constructing compactly generated locally compact groups of intermediate growth without nontrivial compact normal subgroups.
\end{abstract}


\maketitle

\section{Introduction}

Let $B,H$ be groups and let $X$ be a $H$-set. The {\bf unrestricted wreath product} $B\bar{\wr}_XH$ is the semidirect product $B^X\rtimes H$, where $H$ permutes the copies in the power $B^X$. The (restricted) {\bf wreath product} is its subgroup $B\wr_XH=B^{(X)}\rtimes H$, where $B^{(X)}$ is the restricted power. When $X=H$ with action by left translation, these are called the unrestricted and restricted {\bf standard} wreath product. In both cases, some authors also refer to the standard wreath product simply as ``wreath product", and to the general case as ``permutational wreath product".

Originally the definition comes from finite groups, where the restricted/ unrestricted distinction does not appear. Specifically, the first occurring example was probably the wreath product $C_2\wr_{\{1,\dots,n\}}S_n$, where $C_2$ is cyclic of order 2 and $S_n$ is the symmetric group. It is a Coxeter group of type $B_n/C_n$ and thus a isomorphic to the Weyl group in simple algebraic groups of these types.
 
An early use of general wreath products is the classical theorem \cite{KK} that every group that is extension of a normal subgroup $B$ with quotient $H$ embeds into the unrestricted standard wreath product $B\bar{\wr}H$. See \cite[Th.~6.2]{CC} for a topological version (with $B$ compact and $H$ discrete), as well as \cite{Rem} for a more  subtle generalization in the case of profinite groups.
 
In geometric group theory, the restricted wreath product occurs more naturally: indeed it is finitely generated as soon as $B,H$ are finitely generated and $X$ has finitely many orbits (while the unrestricted is uncountable as soon as $X$ is infinite and $B$ nontrivial).
 
Still, the definition does not immediately generalize to locally compact groups. Indeed, for $X$ infinite, the power $B^X$ fails to be locally compact as soon as $B$ is noncompact, and the restricted power $B^{(X)}$ fails to be locally compact as soon as $B$ is nondiscrete. This bad behavior is essentially well-known. For instance for the standard unrestricted wreath product, it was observed in \cite{D} that, for $B\neq 1$ and $H$ non-discrete the obvious product topology is a not even a group topology on $B\wr H$.

The purpose of this note is to indicate how the definition of wreath products naturally extends, in the context of geometric group theory, to the setting of locally compact groups. This is performed in \S\ref{lcw}.

This extension is natural even within the study of discrete groups. Let us provide three illustrations.

\begin{itemize}[leftmargin=*]
\item It is well-known that for any two finite groups $F_1,F_2$ of the same cardinal $n$, the groups $F_1\wr\Z$ and $F_2\wr\Z$ admit isomorphic (unlabeled) Cayley graphs, just taking $F_i\cup\{1_\Z\}$ as generating subset. This means that these groups admit embeddings as cocompact lattices in a single locally compact group, namely the isometry group of this common Cayley graph. A natural explicit group in which they indeed embed as cocompact lattices is the topological wreath product $\mathfrak{S}_n\wr^{\mathfrak{S}_{n-1}}\Z$ (to be defined in \S\ref{lcw}), where $\mathfrak{S}_k$ is the symmetric group on $k$ letters (see Example \ref{finicay}).

\item Adrien Le Boudec \cite{LB2} uses lattices in such wreath products to obtain two quasi-isometric non-amenable finitely generated groups, one being simple and the other having infinite amenable radical.

\item It is a difficult question to determine which wreath products of discrete groups $G=B\wr_{H/L} H$ have the Haagerup Property, assuming that $B$ and $H$ have the Haagerup Property. It was proved in \cite{CSV2} that this holds if $L=1$. Furthermore, assuming that $L=N$ is normal, we can embed it diagonally into $H\times B\wr_{H/N} (H/N)$; thus if in addition $H/N$ has the Haagerup Property, then $G$ also has the Haagerup Property. Considering topological wreath products allows us to extend this result to the case when $L$ is a commensurated subgroup: 
\begin{thm}\label{bhpro}
Assume that $B,H$ have the Haagerup Property, that $L$ is a commensurated subgroup of $H$ such that the relative profinite completion $H\slsl L$ (which is non-discrete in general, see \S\ref{rpc}) has the Haagerup Property. Then the wreath product $B\wr_{H/L}H$ also has the Haagerup Property.
\end{thm}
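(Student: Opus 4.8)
The plan is to reduce the statement to the known $L=1$ case from \cite{CSV2} by passing through a topological wreath product. Recall that the Haagerup Property is inherited by closed subgroups and is closed under taking (Hausdorff) quotients, and — crucially for what follows — that it is stable under the topological wreath product construction $\FC \wr_X \HH$ for locally compact groups $\FC,\HH$ with $\HH$-set $X$ having finitely many orbits with compact open point stabilizers, provided $\FC$ and $\HH$ have the Haagerup Property; this stability (for the new construction of \S\ref{lcw}) is presumably one of the basic properties established there, and I will assume it. Here the relevant topological wreath product is $W = B \wr_{(H\slsl L)/\overline{L}}\, (H\slsl L)$, where $\overline{L}$ denotes the closure of the image of $L$ in the relative profinite completion $H \slsl L$, which by construction is a compact open subgroup; since $B$ is discrete and $H\slsl L$ has the Haagerup Property by hypothesis, $W$ has the Haagerup Property.

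First I would set up the homomorphism $\varphi\colon H \to H\slsl L$ and observe that the coset space $H/L$ is $H$-equivariantly identified, via $\varphi$, with a dense orbit inside $(H\slsl L)/\overline{L}$ — indeed $\varphi^{-1}(\overline L) = L$ is exactly the defining property of the relative profinite completion (see \S\ref{rpc}), so $H/L \hookrightarrow (H\slsl L)/\overline L$ is an injection with dense image, equivariant for $H \to H\slsl L$. Next I would build a homomorphism
\[
\Psi\colon B \wr_{H/L} H \longrightarrow H \times W
\]
as follows: on the base group $B^{(H/L)}$, send a finitely supported function $f\colon H/L \to B$ to $(1, \iota_* f)$, where $\iota_* f \in B^{(\,(H\slsl L)/\overline L\,)}$ is the pushforward along the embedding $H/L \hookrightarrow (H\slsl L)/\overline L$ (still finitely supported); on the top group $H$, send $h \mapsto (h, \varphi(h))$. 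One checks this respects the semidirect-product relations because $\varphi$ intertwines the $H$-action on $H/L$ with the $H\slsl L$-action on $(H\slsl L)/\overline L$. The map $\Psi$ is injective: the first coordinate already detects the top group $H$ faithfully, and on the base group $\iota_*$ is injective since $\iota$ is. Therefore $B\wr_{H/L}H$ embeds as a subgroup of $H \times W$, which has the Haagerup Property (a finite direct product of groups with the Haagerup Property has it). Since $B\wr_{H/L}H$ is discrete, it is a closed subgroup, and the Haagerup Property passes to it.

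The main obstacle — and the place where the $L$ commensurated (rather than normal) hypothesis does real work — is verifying that the image of $\Psi$ genuinely lands in $H\times W$ with the correct topology, i.e. that the pushforward of a finitely supported function on $H/L$ really is supported on finitely many cosets of $\overline L$ in $H\slsl L$, and more importantly that the $H\slsl L$-translates appearing when we conjugate by elements $\varphi(h)$ stay within the restricted power $B^{((H\slsl L)/\overline L)}$ rather than leaking into the unrestricted power. This is automatic here because $B$ is discrete, so the restricted power is just the algebraic restricted power with the discrete topology and finite support is preserved under the permutation action; the genuinely new input is simply that $\overline L$ is compact and open in $H\slsl L$, which is exactly the content of \S\ref{rpc} and what makes $W$ a well-defined object of \S\ref{lcw}. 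A secondary point to check is that $H \to H\slsl L$ need not be injective, so one must keep the first coordinate $h\mapsto(h,\varphi(h))$ to retain faithfulness; with that in place the argument closes. I would also remark that when $L=N$ is normal one has $H\slsl L = H/N$ with $\overline L$ trivial, and $\Psi$ specializes to the diagonal embedding into $H \times B\wr_{H/N}(H/N)$ used in \cite{CSV2}, so this is a clean common generalization.
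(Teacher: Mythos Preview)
Your approach is essentially the paper's: both use the diagonal embedding into $H \times \big(B\wr_{(H\slsl L)/\overline{L}}(H\slsl L)\big)$ and then reduce to the case of a compact open stabilizer, which is exactly the paper's Lemma~\ref{haag_compact} --- the technical heart you correctly flag as the input you are assuming.

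There is one genuine (if easily repaired) gap. The sentence ``Since $B\wr_{H/L}H$ is discrete, it is a closed subgroup'' is not valid reasoning: an injective homomorphism from a discrete group into a locally compact group need not have closed image (e.g.\ $\Z\to\R/\Z$ by an irrational rotation), and without properness the Haagerup Property does not pass back along $\Psi$. What must be checked is that $\Psi$ is \emph{proper}, and the paper does this explicitly. In your setup it is immediate: if $\Psi(f,h)=\big(h,(\iota_*f,\varphi(h))\big)$ lies in a compact subset of $H\times W$, then $h$ ranges over a finite subset of the discrete group $H$ (first coordinate), and $\iota_*f$ over a finite subset of the discrete group $B^{((H\slsl L)/\overline L)}$ (second coordinate), so by injectivity of $\iota_*$ the element $f$ ranges over a finite set as well. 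A minor aside: the map $H/L\to (H\slsl L)/\overline L$ is in fact a bijection (see \S\ref{rpc}), not merely a dense injection; this does not affect your argument but tidies the picture.
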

(All relevant definitions are given in \S\ref{HaPW}.)
This is a particular case of Theorem \ref{thaag}, which applies to more general (non-discrete) groups. An instance where it applies is when $H=\mathrm{SL}_2(\Z[1/k])$ and $L=\mathrm{SL}_2(\Z[1/\ell])$, where $\ell$ divides $k$. It is not covered by the previously known results (except in the trivial case when $k$ divides some power of $\ell$, in which case $\Z[1/\ell]=\Z[1/k]$). 
Let us also mention that Theorem \ref{thaag} also includes a statement about Property PW, a combinatorial stronger analogue of the Haagerup Property.
\end{itemize}

Finally, using one instance of this wreath product construction, we obtain:

\begin{thm}[See Theorem \ref{bethm}]\label{eig}
There exists a totally disconnected, compactly generated, locally compact group that has subexponential growth and is not compact-by-discrete (i.e., has no compact open normal subgroup).
\end{thm}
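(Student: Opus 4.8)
The plan is to realise the group as one instance of the topological wreath product construction of \S\ref{lcw}. Take a finite group $A$ for the lamp and a finitely generated group $G$ of intermediate growth of Grigorchuk type for the acting group, let $L\le G$ be a commensurated subgroup and $X=G/L$, and form $W=A\wr^{L}_{X}G$, the $L$-datum being what makes the ``top'' factor non-discrete --- its relative profinite completion $G\slsl L$, cf.\ \S\ref{rpc}. By construction $W$ is totally disconnected and locally compact; it is compactly generated because $G$ is finitely generated, $A$ is finite, and $X$ has finitely many $G$-orbits (one), so that a compact open subgroup together with finitely many further elements generate; and it contains $\Gamma:=A\wr_X G$ as a cocompact lattice, as in Example~\ref{finicay}.

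For the growth I would use that a cocompact lattice is quasi-isometric to the ambient compactly generated locally compact group and that growth type is a quasi-isometry invariant, reducing to the growth of $\Gamma=A\wr_X G$; its subexponentiality, for the relevant pair $(G,X)$, is exactly the content (upper bound) of the Bartholdi--Erschler estimates on growth of permutational extensions. Superpolynomial growth, hence genuine intermediate growth, is then automatic once the last point below is established: by the structure theory of totally disconnected locally compact groups of polynomial growth, such a group, if compactly generated, has a compact open normal subgroup, so a group without one grows faster than every polynomial.

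The absence of a compact open normal subgroup is the heart of the matter, and the role of the $L$-completion is precisely to make the non-discreteness of $W$ irreducible. Suppose $K\trianglelefteq W$ is compact and open. Its image under the canonical continuous open homomorphism $W\to G\slsl L$, whose kernel is the topological base, is a compact \emph{open} normal subgroup of $G\slsl L$; so it suffices to choose $L$ so that $G\slsl L$ has none. For this one wants $L$ to be commensurable with no normal subgroup of $G$: a compact open normal subgroup of $G\slsl L$ meets the defining compact open subgroup $\overline L$ in a finite-index subgroup, is itself compact, hence is commensurable with $\overline L$, hence pulls back to a subgroup of $G$ that is both normal in $G$ and commensurable with $L$. (Should the construction of \S\ref{lcw} not present $W$ as an extension of $G\slsl L$ in this way, one argues instead from a faithful $W$-action on a locally finite graph with no nonempty finite invariant vertex set --- an orbital Schreier graph of $X$, or a Cayley graph of $\Gamma$ --- on which a compact open normal subgroup would act with uniformly finite orbits and produce a forbidden finite-to-one invariant quotient. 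And in any case, since $X$ is infinite and $G$-transitive and $G$ is just infinite, $\Gamma=A\wr_X G$ has no nontrivial finite normal subgroup, so $K\cap\Gamma=1$, forcing $\Gamma$ to be finite-index in the discrete group $W/K$, which the structure of $W$ is to rule out.)

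I expect the genuine difficulty to lie not in any single one of these verifications but in arranging the construction so that they all hold at once: one needs a Grigorchuk-type group $G$, to supply intermediate growth, carrying a commensurated subgroup $L$ of infinite index and infinite order that is far from normal in the above sense, and for which the Schreier action on $G/L$ remains within the scope of the Bartholdi--Erschler estimates. Identifying such a triple $(G,L,A)$, and checking that the topological completion is compatible with those combinatorial growth bounds, is where the real work is.
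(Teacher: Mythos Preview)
Your proposal rests on a misreading of the semirestricted wreath product $B\wr^A_X H$ from \S\ref{lcw}: the superscript $A$ is a compact open subgroup of the \emph{lamp group} $B$, not a subgroup of the acting group $H$. The non-discreteness of $B\wr^A_X H$ comes from the compact open subgroup $A^X$ of the semirestricted power $B^{X,A}$; the acting group $H$ stays exactly as it is, and no relative profinite completion enters the construction. (The completion $H\slsl L$ appears only in the Haagerup discussion of \S\ref{HaPW}, not in the definition of the group.) Consequently there is no ``canonical continuous open homomorphism $W\to G\slsl L$'' to exploit, and your search for a commensurated subgroup $L\le G$ that is ``far from normal'' is aimed at a group that is not the one defined in \S\ref{lcw}.

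Once the construction is read correctly, the proof is much shorter than you anticipate and involves none of the difficulties in your last paragraph. One takes the \emph{discrete} Grigorchuk group $\Gamma$ as the acting group, the Bartholdi--Erschler subgroup $\Lambda$ (no commensuration hypothesis needed) giving $X=\Gamma/\Lambda$, and for the lamp the pair $B=\mathfrak{S}_n$, $A=\mathfrak{S}_{n-1}$ with $n\ge 3$. Proposition~\ref{copcip} embeds the discrete group $F\wr_{\Gamma/\Lambda}\Gamma$ (any $F$ of order $n$) as a cocompact lattice in $G=\mathfrak{S}_n\wr^{\mathfrak{S}_{n-1}}_{\Gamma/\Lambda}\Gamma$, so $G$ inherits intermediate growth directly from Bartholdi--Erschler. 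The absence of a compact open normal subgroup is handled not by a quotient argument but by the structural computation of $\WW(G)$ in \S\ref{poly}: Corollary~\ref{wg0} gives $\WW(G)=1$ because $\mathrm{Core}_{\mathfrak{S}_n}(\mathfrak{S}_{n-1})=1$ and either $\WW(\Gamma)=1$ or the $\Gamma$-action on $\Gamma/\Lambda$ is faithful. There is no compatibility to check between a completion and the growth bounds; the Bartholdi--Erschler estimate is used as a black box on the discrete lattice.
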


This relies on the construction by Bartholdi and Erschler \cite{BE} of some (discrete) wreath products of subexponential growth. This disproves a conjecture of Trofimov \cite[(**), p.~120]{Tro1} about the structure of vertex-transitive graphs, see \S\ref{trocon}. This conjecture is also the main subject of discussion in the more recent \cite{Tro2}.

\medskip

\noindent {\bf Outline.}
\begin{itemize}
\item In \S\ref{lcw} we introduce semirestricted wreath products, which is the promised natural extension of wreath products to the setting of locally compact groups.
\item In \S\ref{HaPW} we prove a stability result for the Haagerup Property and its combinatorial strengthening Property PW, including Theorem \ref{bhpro} as a particular case.
\item In \S\ref{poly}, we describe the subgroup of bounded element and the polycompact radical for arbitrary semirestricted locally compact wreath products. This is used in a very particular case to obtain that this subgroup is trivial, so as to prove that one group is not compact-by-discrete, in \S\ref{intgro}.
\item In \S\ref{intgro}, we prove Theorem \ref{eig}, explain why it disproves Trofimov's conjecture, and ask some further open questions on locally compact groups of intermediate growth.
\item In \S\ref{presen}, we extend some results of infinite presentability to the locally compact setting, and we also consider them in the analogous context of wreathed Coxeter groups.
\item In \S\ref{otherw}, we present a variant of the construction of \S\ref{lcw}, relying on a commensurating action of the acting group.
\end{itemize}

\bigskip

\textbf{Acknowledgement.} I thank Adrien Le Boudec for his interest and motivating discussions. I thank Pierre-Emmanuel Caprace for pointing me out  Trofimov's conjecture as well as useful remarks. I thank the referee for a careful reading and useful references.

\section{Wreath products in the locally compact context}\label{lcw}

We wish to extend the wreath product $B\wr_X H$ to locally compact groups. In all the following, $B,H$ are groups and $X$ is an $H$-set; the group $H$ acts on $B^X$ by $h\cdot f(x)=f(h^{-1}x)$.

We begin with the easier case when $B$ is still assumed to be discrete; now $H$ is a locally compact group. On the other hand, we still assume that $X$ is discrete: $X$ is a continuous discrete $H$-set. This means that one of the following equivalent conditions is fulfilled:
\begin{itemize}
\item the action of $H$ on $X$ is continuous (that is, the action map $H\times X\to X$ is continuous)
\item for every $x\in X$, the stabilizer $H_x$ is open in $H$;
\item $X$ is isomorphic as an $H$-set to a disjoint union $\bigsqcup_{i\in I}H/L_i$, where $(L_i)_{i\in I}$ is a family of open subgroups of $H$.
\end{itemize}

Then the action of $H$ on the discrete group $B^{(X)}$ (restriction of the action on $B^X$) is continuous. Then the semidirect product $B^{(X)}\rtimes H$ is a locally compact group for the product topology. Note that when $H$ is non-discrete, this does not include the standard wreath product; in this setting, the closest generalization is the case when $X=H/L$ with $L$ compact open. 

Now let us deal with the general case. We know that the restricted wreath product behaves well when $B$ is discrete and the unrestricted wreath product behaves well when $B$ is compact. The natural definition consists in interpolating between restricted and unrestricted wreath products. We first define it with no topological assumption:

\begin{defn}Let $B,H,X$ be as above and let $A$ be a subgroup of $B$.
We first define the {\bf semirestricted power}
\[B^{X,A}=\{f\in B^X:\;f(x)\in A,\;\forall^* x\in X\},\] 
where $\forall^*$ means ``for all but finitely many". The {\bf semirestricted wreath product} is defined as the semidirect product
\[B\wr^A_XH=B^{X,A}\rtimes H.\]
\end{defn}

\begin{rem}
The semirestricted power is a particular case of the (semi)restricted product, which underlies the classical notion of Adele group, and is also considered more generally (and with closer motivation) in \cite[\S 3.1]{BCGM},  to notably construct non-cocompact lattices in some metabelian locally compact groups. 

An instance of semirestricted power (attributed to the author) appears in a paper of Eisenmann and Monod \cite[\S 3]{EM}: namely, $F$ is a finite perfect group and $K$ a nontrivial subgroup such that $F$ is not normally generated by any element of $K$, but that generates $F$ normally. Then for $X$ infinite, $F^{X,K}$ is a perfect locally compact group, with no infinite discrete quotient, that is not topologically normally generated by any element.

Locally compact wreath products $B\wr_{H/L}H$ with $B$ discrete, but $H$ arbitrary, are mentioned in \cite[Theorem C]{GM}.
 
An instance of semirestricted wreath product is mentioned in \cite[Prop.\ 6.14]{LB}.
\end{rem}

Next, the general definition in the locally compact setting is when $A$ is compact open in $B$.

The following lemma is standard. 
\begin{lem}[\cite{CH}, Prop.\ 8.2.4]\label{extop}
Let $G$ be a group and $H$ a subgroup. Let $\mathcal{T}$ be a group topology on $H$. Suppose that every conjugation in $G$ restricts to a continuous isomorphism between two open subgroups of $H$. Then there is a unique topology on $G$ making $H$ open with the induced topology coinciding with $\mathcal{T}$. \qed
\end{lem}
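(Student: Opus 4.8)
The plan is to prove the lemma by explicitly exhibiting the unique topology and checking that it works. First I would define, for each $g \in G$, the map $c_g \colon H \to H$ (a partial map, really) given by conjugation: by hypothesis there are open subgroups $U_g, V_g \le H$ such that $c_g$ restricts to a topological isomorphism $U_g \to V_g$. I would then declare a subset $\Omega \subseteq G$ to be open if and only if, for every $g \in \Omega$, the set $g^{-1}\Omega \cap H$ is a $\mathcal{T}$-neighbourhood of $1$ in $H$; equivalently, one takes as a base of neighbourhoods of an arbitrary point $g \in G$ the family $\{gW : W \in \mathcal{N}\}$ where $\mathcal{N}$ is the filter of $\mathcal{T}$-neighbourhoods of $1$ in $H$. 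This forces $H$ to be open with its subspace topology equal to $\mathcal{T}$, and forces uniqueness, since in any group topology making $H$ open with the right induced topology the left translates of a neighbourhood basis of $1$ in $H$ must form a neighbourhood basis at each point.

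The substantive work is then to verify this really is a group topology on $G$, i.e.\ that multiplication and inversion are continuous, or equivalently (by the standard criterion for a filter base $\mathcal{N}$ at the identity to define a group topology) that: (i) each $W \in \mathcal{N}$ contains a $W' \in \mathcal{N}$ with $W'W' \subseteq W$; (ii) each $W \in \mathcal{N}$ contains some $W^{-1}$ with $W' \in \mathcal{N}$, $W'^{-1}\subseteq W$; and (iii) for every $g \in G$ and every $W \in \mathcal{N}$ there is $W' \in \mathcal{N}$ with $g W' g^{-1} \subseteq W$. Conditions (i) and (ii) hold because $\mathcal{T}$ is already a group topology on $H$ and $\mathcal{N}$ is its neighbourhood filter at $1$. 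Condition (iii) is exactly where the hypothesis on conjugations enters: given $g$ and $W$, shrink $W$ inside the open subgroup $V_g$ (replace $W$ by $W \cap V_g$, still in $\mathcal{N}$ since $V_g$ is $\mathcal{T}$-open hence a neighbourhood of $1$); then $c_g^{-1}(W \cap V_g) = c_{g^{-1}}(W\cap V_g) \cap U_g$ is, by continuity of the topological isomorphism $V_g \to U_g$ (the inverse of $c_g|_{U_g}$), a $\mathcal{T}$-neighbourhood of $1$, hence lies in $\mathcal{N}$, and conjugating it by $g$ lands inside $W$.

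I would also double-check the mild but necessary point that $\mathcal{N}$, consisting of neighbourhoods of $1$ in $(H,\mathcal{T})$, is a genuine filter base consisting of subsets of $G$ that all contain $1_G = 1_H$, and that the resulting topology is Hausdorff iff $\mathcal{T}$ is — though the statement as given does not demand Hausdorffness, so this can be omitted or remarked upon only if needed later. Finally, from (i)--(iii) the standard theorem (e.g.\ Bourbaki, \emph{Topologie g\'en\'erale} III.1.2) produces a unique group topology on $G$ for which $\mathcal{N}$ is the neighbourhood filter of the identity; by construction $H$ is open and carries the subspace topology $\mathcal{T}$, and any other group topology with these two properties has the same neighbourhood filter at every point, giving uniqueness.

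The main obstacle is purely bookkeeping: making sure that when one intersects $W$ with $V_g$ and pulls back along $c_g$ one stays inside the domain $U_g$ where $c_g$ is a homeomorphism, so that continuity of the \emph{partial} conjugation map can legitimately be invoked; once the partial-domain issue is handled carefully, condition (iii) is immediate and there is no real difficulty. Since the paper already cites this as ``standard'' (\cite{CH}, Prop.\ 8.2.4) and marks it \qed in the statement, in the final text I would simply refer to that reference rather than reproduce the verification.
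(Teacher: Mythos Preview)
Your proposal is correct and is exactly the standard verification via the Bourbaki criterion on neighbourhood filters; the paper itself gives no proof at all, simply citing \cite{CH} and marking the statement \qed, which is precisely what you conclude you would do in the final text. There is therefore nothing to compare against, and your outlined argument (including the handling of condition (iii) via the partial conjugation isomorphism $U_g\to V_g$) is the expected one.
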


\begin{prop}
Suppose that $B$ is a locally compact group and $A$ a compact open subgroup and $X$ a (discrete) set. There is a unique structure of topological group on $B^{X,A}$ that makes the embedding of $A^X$ a topological isomorphism to an open subgroup. It is locally compact.

Suppose in addition that $H$ is a locally compact group and that the $H$-action on $X$ is continuous (i.e., has open point stabilizers). Then there is a unique structure of topological group on $B\wr^A_XH=B^{X,A}\rtimes H$ that makes it a topological semidirect product. 
\end{prop}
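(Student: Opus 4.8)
The two assertions are handled separately: the first via Lemma~\ref{extop}, the second by a direct verification that the $H$-action is jointly continuous.

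For the first assertion, apply Lemma~\ref{extop} with $G=B^{X,A}$ and with $A^X$, equipped with its (compact) product topology, as the distinguished subgroup. One must check that every conjugation of $B^{X,A}$ restricts to a continuous isomorphism between two open subgroups of $A^X$. Fix $g\in B^{X,A}$ and let $S\subseteq X$ be the finite set of $x$ with $g(x)\notin A$. For $f\in A^X$ one has $(gfg^{-1})(x)=g(x)f(x)g(x)^{-1}$, which lies in $A$ automatically when $x\notin S$ and lies in $A$ exactly when $f(x)\in A\cap g(x)^{-1}Ag(x)$ when $x\in S$. Since $A$ is compact open in $B$, each $g(x)^{-1}Ag(x)$ is compact open in $B$, so $A\cap g(x)^{-1}Ag(x)$ is an open subgroup of $A$; hence the set of $f\in A^X$ with $gfg^{-1}\in A^X$ is the open subgroup $\{f\in A^X:\,f(x)\in A\cap g(x)^{-1}Ag(x)\ \forall x\in S\}$, its image under conjugation is the analogous open subgroup, and $f\mapsto gfg^{-1}$ is a homomorphism there which is continuous (being continuous in each coordinate) with continuous inverse $f\mapsto g^{-1}fg$. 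Lemma~\ref{extop} now furnishes the unique topology making $A^X$ an open topological subgroup; it is locally compact because $A^X$ is compact (Tychonoff) and open.

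For the second assertion I use the standard fact that if $N$ and $H$ are topological groups and $\alpha:H\to\Aut(N)$ is a homomorphism such that the map $H\times N\to N$, $(h,n)\mapsto\alpha(h)(n)$, is continuous, then $N\rtimes H$ with the product topology is a topological group. Here $H$ acts on $N=B^{X,A}$ by $(h\cdot f)(x)=f(h^{-1}x)$; this preserves $B^{X,A}$, and being a permutation of coordinates it restricts to a homeomorphism of the open subgroup $A^X$, so each $h$ acts by a topological automorphism. It remains to prove that $H\times B^{X,A}\to B^{X,A}$ is jointly continuous. Using that each $\alpha(h)$ is a homeomorphism, the identity $\alpha(h_0h')(f)=\alpha(h_0)(\alpha(h')(f))$ reduces joint continuity at $(h_0,f_0)$ to joint continuity at $(1_H,f_0)$; then, writing $f=f_0g$ and using $h\cdot(f_0g)=(h\cdot f_0)(h\cdot g)$ together with continuity of multiplication in $N$, this reduces to: (i) $h\mapsto h\cdot f_0$ is continuous at $1_H$; and (ii) $(h,g)\mapsto h\cdot g$ is continuous at $(1_H,1)$. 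Statement (ii) is easy: given a basic neighbourhood $W=\{a\in A^X:\,a(x)\in U_x\ \forall x\in T\}$ of $1$ with $T$ finite and each $U_x$ an open neighbourhood of $1$ in $A$, the open subgroup $\bigcap_{x\in T}H_x$ of $H$ fixes $T$ pointwise and hence maps $W$ into $W$. For (i), the key point is that $f_0^{-1}(h\cdot f_0)$ must first land in the open subgroup $A^X$: writing $S_0$ for the finite set where $f_0\notin A$, any $h\in\bigcap_{x\in S_0}H_x$ fixes $S_0$ pointwise, so $h\cdot f_0$ agrees with $f_0$ on $S_0$ and takes values in $A$ off $S_0$, whence $f_0^{-1}(h\cdot f_0)\in A^X$; intersecting further with $\bigcap_{x\in T}H_x$ then forces $f_0^{-1}(h\cdot f_0)\in W$. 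This gives joint continuity, so $B\wr^A_XH$ is a topological group, visibly locally compact as a product of $B^{X,A}$ and $H$; uniqueness is immediate, the product topology being the only one making it a topological semidirect product.

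The only genuinely delicate point is (i) — more precisely, ensuring that $f_0^{-1}(h\cdot f_0)$ stays inside $A^X$ as $h$ ranges over a neighbourhood of the identity. This is exactly where the finiteness of the exceptional set $S_0$ and the openness of point stabilizers are used together, and it is the reason the semirestricted power, rather than the full unrestricted power, is the right object in this construction.
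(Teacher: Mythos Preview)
Your proof is correct and follows essentially the same approach as the paper's: both invoke Lemma~\ref{extop} for the first assertion and establish joint continuity of the $H$-action on $B^{X,A}$ for the second, with the same underlying reduction (split into the orbit map $h\mapsto h\cdot f_0$ at the identity of $H$ and the action near $(1_H,1)$). Your write-up is in fact more explicit than the paper's, which leaves the verification of the hypothesis of Lemma~\ref{extop} to the reader and phrases the continuity check via nets rather than neighbourhoods.
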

\begin{proof}
The first fact follows from Lemma \ref{extop}.
Let us now check that $H$ acts continuously on $B^{X,A}$. If $h_i\to h$, $w_i\to w$, then for $i$ large enough, we can write $w_i=\alpha_iw$ with $\alpha_i\in A^X$, $\alpha_i\to 1$. Also write $h_i=m_ih$, where $m_i\to 1$. Then 
\[h_i\cdot w_i= h_i\cdot (\alpha_iw)=(h_i\cdot\alpha_i)(h_i\cdot w)\]
\[(m_i\cdot(h\cdot\alpha_i))(m_i\cdot (h\cdot w)).\]
We have $\nu_i=h\cdot\alpha_i\to 1$, and hence $m_i\cdot\nu_i\to 1$: indeed, for all $x\in X$, $m_i\cdot \nu_i(x)=\nu_i(m_i^{-1}x)$; since $m_i\to 1$, for large $i$ we have $m_i^{-1}x=x$, whence the fact. Similarly $m_i\cdot (h\cdot w)$ tends to $h\cdot w$ and since $B^{X,A}$ is a topological group, we obtain $h_i\cdot w_i\to h\cdot w$.
\end{proof}

Recall that a homomorphism between locally compact groups is copci if it is continuous, proper, with cocompact image.

\begin{prop}\label{copcip}
Let $B_1,B_2$ be locally compact groups with compact open subgroups $A_1,A_2$, and $u:B_1\to B_2$ be a continuous homomorphism mapping $A_1$ into $A_2$. This yields continuous homomorphisms 
\[f:B_1^{X,A_1}\to B_2^{X,A_2},\qquad f':B_1\wr^{A_1}_XH\to B_2\wr^{A_2}_XH.\]
Consider the induced map $\bar{u}:B_1/A_1\to B_2/A_2$.
\begin{enumerate}
\item Suppose that $X\neq\emptyset$. Then $f$ is proper if and only if $f'$ is proper, if and only if $\bar{u}$ is injective (that is, $u^{-1}(A_2)=A_1$).

\item Suppose that $X$ is infinite. Then $f$ has cocompact image if and only $f'$ has cocompact image, if and only if $\bar{u}$ is surjective (that is, the composite map $B_1\to B_2\to B_2/A_2$ is surjective).

\item Thus for $X$ infinite, $f$ is copci if and only if $\bar{u}$ is bijective.
\end{enumerate}
\end{prop}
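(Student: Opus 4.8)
The plan is to reduce each of (1) and (2) to a statement about the compact open subgroups $A_i^X\le B_i^{X,A_i}$ (open by construction, compact by Tychonoff), and then to read off (3). Two elementary facts will be used. \textbf{Fact A:} if $\phi\colon G\to G'$ is a continuous homomorphism of locally compact groups and $U\le G$, $U'\le G'$ are compact open subgroups with $\phi(U)\subseteq U'$, then $\phi$ is proper if and only if the open subgroup $\phi^{-1}(U')$ is compact, and $\phi$ has cocompact image if and only if $\phi(G)$ has finitely many orbits on the coset space $G'/U'$ (for the left-translation action); in the latter one may replace $\phi(G)$ by $\overline{\phi(G)}$, since the $G'$-action on the discrete set $G'/U'$ is continuous and so $\phi(G)$ and $\overline{\phi(G)}$ have the same orbits on it. Fact A follows from: $G'$ is tiled by cosets of $U'$; a compact subset meets finitely many of them; a $\phi$-preimage of a coset of $U'$ is empty or a coset of $\phi^{-1}(U')$; and $\overline{\phi(G)}\backslash G'\to\overline{\phi(G)}\backslash G'/U'$ has discrete base and compact fibres (images of $U'$). \textbf{Fact B:} for a continuous $H$-equivariant homomorphism $\phi\colon N\to N'$, the map $\phi\rtimes\mathrm{id}_H$ is proper, resp.\ has cocompact image, if and only if $\phi$ is; for properness this is the standard extension lemma, and for cocompactness one uses that $M\rtimes H$ is closed in $N'\rtimes H$ for $M$ a closed $H$-invariant subgroup, that $\overline{(\phi\rtimes\mathrm{id})(N\rtimes H)}=\overline{\phi(N)}\rtimes H$, and that $(N'\rtimes H)/(\overline{\phi(N)}\rtimes H)\cong N'/\overline{\phi(N)}$.

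For (1), I would first compute $f^{-1}(A_2^X)=(A_1')^{X,A_1}$, where $A_1'=u^{-1}(A_2)$ is an open subgroup of $B_1$ containing $A_1$; this group has $A_1^X$ as a compact open subgroup, with coset space $(A_1')^{X,A_1}/A_1^X$ in bijection with the restricted power $(A_1'/A_1)^{(X)}$ via $\beta A_1^X\mapsto(\beta(x)A_1)_x$. Hence $f^{-1}(A_2^X)$ is compact if and only if $(A_1'/A_1)^{(X)}$ is finite, which for $X$ infinite means $A_1'=A_1$, i.e.\ $u^{-1}(A_2)=A_1$, i.e.\ $\bar u$ is injective. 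By Facts A and B this is equivalent to properness of $f$ and of $f'$. (For $X$ nonempty finite one still gets that $f$ is proper iff $f'$ is proper, now amounting to $[u^{-1}(A_2):A_1]<\infty$.)

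For (2), the key step is to identify the coset space $B_2^{X,A_2}/A_2^X$ with the restricted power $(B_2/A_2)^{(X)}$ of the pointed set $B_2/A_2$ (base point $A_2$), via $\beta A_2^X\mapsto(\beta(x)A_2)_x$, so that the $B_2^{X,A_2}$-action becomes coordinatewise left translation. One then checks that the $f(B_1^{X,A_1})$-orbit of a tuple $(c_x)_x$ is exactly $\{(c_x')_x\in(B_2/A_2)^{(X)}:c_x'\in u(B_1)c_x\text{ for all }x\}$ — the nontrivial inclusion uses that the $B_1$-coordinates may be chosen trivial outside a finite set — so the orbit set is canonically the restricted power $\big(u(B_1)\backslash B_2/A_2\big)^{(X)}$ of the set of double cosets, pointed at $u(B_1)A_2$. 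For $X$ infinite this is finite if and only if there is a single double coset, i.e.\ $u(B_1)A_2=B_2$, i.e.\ the composite $B_1\to B_2\to B_2/A_2$ is surjective, i.e.\ $\bar u$ is surjective; by Facts A and B this is equivalent to $f$ and $f'$ having cocompact image. Part (3) then drops out: $f$ is continuous, so for $X$ infinite it is copci precisely when it is proper with cocompact image, that is, precisely when $\bar u$ is both injective and surjective, hence bijective.

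The one place I expect genuine friction is the bookkeeping caused by $A_1$ and $A_2$ not being normal, so that $B_i/A_i$ is only a pointed set and $u(B_1)A_2$ only a subset of $B_2$: one has to keep the sides straight (cosets of $A_i^X$ on the correct side, left-translation actions) and, in (2), make sure the description of an $f(B_1^{X,A_1})$-orbit is an equality and not merely an inclusion. Beyond that, Facts A and B and the two cardinality computations are routine.
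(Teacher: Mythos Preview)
Your argument is correct and follows the same route as the paper's: reduce from $f'$ to $f$ via the semidirect-product structure, test properness by computing $f^{-1}(A_2^X)=(u^{-1}(A_2))^{X,A_1}$, and test cocompactness by passing to the image modulo $A_2^X$ inside $(B_2/A_2)^{(X)}$. Your treatment is in fact more careful than the paper's --- the double-coset bookkeeping in (2) is cleaner than the paper's loose ``finite index'' language, and your parenthetical remark that for finite nonempty $X$ properness of $f$ only amounts to $[u^{-1}(A_2):A_1]<\infty$ (rather than $u^{-1}(A_2)=A_1$) is a genuine refinement: the paper's statement of (1), as written for all $X\neq\emptyset$, is literally correct only when $X$ is infinite.
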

\begin{proof}
It is enough to check everything for $f$. The inverse image of the compact open subgroup ${A_2}^X$ is $(u^{-1}(A_2))^{X,A_1}$, and is compact if and only if $u^{-1}(A_2)=A_1$. This yields the first part. 

For the second, if $M$ is the projection in $B_2/A_2$ of the image, then the projection in $(B_2/A_2)^{(X)}=B_2^{X,A_2}/A_2^X$ of the image is $M^{(X)}$. It has finite index only when $M=B_2/A_2$, and since $A_2^X$ is compact, conversely if $M=B_2/A_2$ then cocompactness follows.
\end{proof}

\begin{rem}
Since $A$ is a compact open subgroup of $B$, the unit connected component $B^\circ=A^\circ$ is compact. We readily see that $(B\wr^A_X H)^\circ$ is the unrestricted wreath product $B^\circ\bar{\wr}_X H^\circ\simeq (B^\circ)^X\times H^\circ$.
\end{rem}

 \begin{ex}\label{finicay}
Fix any finitely generated group $\Gamma$. Let $F$ be a finite group, and fix a bijection of $F$ with $\{1,\dots,n\}$. The left action of $F$ on itself yields a homomorphism $F\to\mathfrak{S}_n$, inducing a bijection $F\to\mathfrak{S}_n/\mathfrak{S}_{n-1}$, where $\mathfrak{S}_{n-1}$ is any point stabilizer. Hence, by Proposition \ref{copcip}, it induces an embedding of $F\wr\Gamma$ into $\mathfrak{S}_{n}\wr^{\mathfrak{S}_{n-1}}\Gamma$ as a cocompact lattice.

It would thus be interesting to further investigate these groups $\mathfrak{S}_{n}\wr^{\mathfrak{S}_{n-1}}\Gamma$.

Actually, the isometry group $H$ of the Cayley graph of $\Gamma$, with respect to some finite generating subset $S$, can turn out to be larger than $\Gamma$, e.g., non-discrete (e.g., when $\Gamma$ is free over $S$ and $S$ contains at least two elements). Then, the isometry group of the Cayley graph of $F\wr\Gamma$ (with respect to $F\cup S$) includes a larger subgroup including $\mathfrak{S}_{n}\wr^{\mathfrak{S}_{n-1}}\Gamma$, namely $\mathfrak{S}_{n}\wr^{\mathfrak{S}_{n-1}}_L H$, where $L$ is the (compact) stabilizer of $1\in\Gamma$ in the isometry group of the Cayley graph of $\Gamma$. We can expect this to often coincide with the full isometry group of the given Cayley graph of $F\wr\Gamma$. 

Note that for the wreath product $C\wr\Z$, this only yields an embedding into an overgroup of finite index, while there are natural known non-discrete envelopes, see Example \ref{envw2} for $q=2$.
 \end{ex}

The semirestricted wreath product construction preserves unimodularity:

\begin{prop}\label{unimod}
The semirestricted wreath product $B\wr^A_XH$ is unimodular as soon as $B$ and $H$ are unimodular.
\end{prop}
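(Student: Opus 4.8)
The plan is to use the semidirect decomposition $G:=B\wr^A_XH=B^{X,A}\rtimes H$ together with the standard behaviour of modular functions under semidirect products: if $N\rtimes Q$ is a semidirect product of locally compact groups with $\Delta_N\equiv\Delta_Q\equiv 1$ and if every conjugation $c_q\colon n\mapsto qnq^{-1}$ ($q\in Q$) preserves Haar measure on $N$, then $\Delta_{N\rtimes Q}\equiv 1$ (this is immediate from the formula for the modular function of a semidirect product). Since $H$ is unimodular by hypothesis, it suffices to prove: (i) $B^{X,A}$ is unimodular; and (ii) for every $h\in H$, conjugation by $h$ preserves Haar measure on $B^{X,A}$.

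For (i), I would exhaust $B^{X,A}$ by the subgroups $U_S:=B^S\times A^{X\setminus S}=\{f\in B^{X,A}:f(x)\in A\ \text{for all }x\notin S\}$, indexed by the finite subsets $S\subseteq X$. Each $U_S$ contains the open subgroup $A^X$ (open by the very construction of the topology on $B^{X,A}$), hence is itself open; and $U_S$ is a direct product of finitely many copies of the unimodular group $B$ with the compact — hence unimodular — group $A^{X\setminus S}$, so $U_S$ is unimodular. Since the modular function of a locally compact group restricts, on any open subgroup, to the modular function of that subgroup, and $B^{X,A}=\bigcup_S U_S$, we conclude $\Delta_{B^{X,A}}\equiv 1$.

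For (ii), observe that conjugation $c_h$ by $h$ on $B^{X,A}$ is exactly the coordinate-permutation map $f\mapsto\bigl(x\mapsto f(h^{-1}x)\bigr)$ attached to the bijection $x\mapsto hx$ of $X$; in particular it preserves the open subgroup $A^X=\prod_{x\in X}A$ and acts there just by permuting the factors, so it preserves the product Haar measure on $A^X$. Now invoke the elementary fact that a topological automorphism $\phi$ of a locally compact group $\Gamma$ that stabilizes an open subgroup $\Lambda$ and preserves Haar measure on $\Lambda$ preserves Haar measure on $\Gamma$ — which follows at once from the description $\mu_\Gamma(E)=\sum_{g\Lambda\in\Gamma/\Lambda}\mu_\Lambda(g^{-1}E\cap\Lambda)$ together with the fact that $\phi$ permutes the left cosets $\Gamma/\Lambda$ — applied to $\Gamma=B^{X,A}$, $\Lambda=A^X$, $\phi=c_h$. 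Feeding (i), (ii) and the unimodularity of $H$ into the semidirect-product formula gives $\Delta_G\equiv 1$. The only substantive point is (ii) — that $H$ acts on the semirestricted power by measure-preserving transformations — the rest being bookkeeping with open subgroups; the one thing to watch is that the compact open subgroup $A^X\subseteq B^{X,A}$ does not by itself force $B^{X,A}$ to be unimodular, so the exhaustion by the visibly unimodular $U_S$ in step (i) is genuinely needed.
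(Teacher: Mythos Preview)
Your proof is correct and follows the same overall strategy as the paper: use the semidirect decomposition, show $B^{X,A}$ is unimodular, show $H$ acts on it by Haar-preserving automorphisms, and conclude. The only difference is cosmetic: for the unimodularity of $B^{X,A}$ the paper argues that conjugation by each $\delta_x(b)$ (hence by all of $B^{X,A}$) locally preserves the Haar measure of $A^X$, whereas you instead exhaust $B^{X,A}$ by the open subgroups $U_S=B^S\times A^{X\smallsetminus S}$, each visibly unimodular; both arguments are equally short and rely on the same underlying fact that $B$ is unimodular.
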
 
\begin{proof}
Since $A$ is open in $B$ and $B$ is unimodular, $A$ is unimodular and conjugation by $B$ preserves locally its Haar measure around 1. It follows that conjugation of $A^X$ by an element of the form $\delta_x(b)$ (mapping $x$ to $b$ and other elements of $X$ to 1) preserves locally the Haar measure of $A^X$, and hence this holds for all elements of $B^{X,A}$. Hence $B^{X,A}$.

Clearly $H$ preserves the Haar measure of $A^X$, and hence it locally preserves the Haar measure of $B^{X,A}$. Since $H$ is unimodular, it follows that the semidirect product is also unimodular.
\end{proof} 

\begin{rem}
Some authors, such as Klopsch \cite[\S 4.3]{Kl} refer to a possible notion of profinite wreath product, defining it in a particular case.

Let us define it here, calling it {\bf compact wreath product}; it will not be used elsewhere in the paper and is distinct from the constructions we consider. Namely, let $B$ be an {\it abelian} compact group and $H$ a profinite group. The compact wreath product $B\hat{\wr}H$ is the projective limit of $B\bar{\wr}P$, where $P$ ranges over (Hausdorff) finite quotients of $P$. It is mentioned in the particular case: $B$ cyclic of order $p$ and $H$ the $p$-adic group $\Z_p$ in \cite[\S 4.3]{Kl}; the same example also occurs in \cite{DS}.

The main drawback of this construction, namely the restriction to $B$ abelian, is due to the fact that when $P$ is a finite quotient of $K$ and $P'$ a quotient of $P$, there is a canonical homomorphism from $B\wr P$ onto  $B\wr P'$ only when $B$ is abelian. Its main advantage is that it does not refer to any choice of open subgroup in $B$ and does not require that $H$ is discrete.
\end{rem}
 
\section{Haagerup and PW Properties}\label{HaPW}
 
\subsection{Definition of Haagerup and PW Properties} 
 
Recall that locally compact group has the {\bf Haagerup Property} if the function 1 on $G$ admits an approximation, uniformly on compact subsets, by continuous positive definite functions vanishing at infinity. A locally compact group has the Haagerup Property if and only if all its open, compactly generated subgroups have the Haagerup Property. For these compactly generated subgroups, or more generally for $\sigma$-compact locally compact groups, the Haagerup Property is equivalent to the existence of a proper continuous conditionally negative definite function, or equivalently of a metrically proper affine isometric action on a Hilbert space. All these facts are due to Akemann and Walter \cite{AW}. 

Also recall that a locally compact group has {\bf Property PW} if it admits a continuous action on a discrete set $X$ with a subset $M\subset X$ such that the function $g\mapsto\ell(g)=\#(M\triangle gM)$ takes finite values and is proper. This notion has been widely considered (at least for discrete group actions) before being given a name in \cite{CSV1} and being studied in \cite{CFW} (where in particular it is checked that $\ell$ is automatically continuous). Clearly Property PW implies $\sigma$-compactness. It is equivalent to the existence of a metrically proper continuous action on a CAT(0) cube complex.

\subsection{Commensurated subgroups and relative profinite completion}\label{rpc}

Recall that two subgroups of a group are {\bf commensurate} if their intersection has finite index in both; a subgroup is {\bf commensurated} if its conjugates are pairwise commensurate. It is a classical observation that a subgroup $L$ of a group $H$ is commensurated if and only if $L$ has finite orbits on $H/L$.

\begin{defn} Let $H$ be a locally compact group and $L$ be an open subgroup of $H$. The {\bf relative profinite completion} of $(H,L)$ is the projective limit $H\slsl L=\underleftarrow{\lim}\,H/M$, where $M$ ranges over the open finite index subgroups of $L$.
\end{defn}
Endowed with the projective limit topology, this is a locally compact space (regardless of $L$ being commensurated) with continuous (left) $H$-action and a canonical continuous $H$-equivariant map $H\to H\slsl L$. If, in addition, $L$ is commensurated, then there is a unique continuous group law making it a group homomorphism.

The closure $L\slsl L$ of the image of $L$ in $H\slsl L$ is an open subgroup and $L\to L\slsl L$ is the usual profinite completion of $L$. Note that the induced map $H/L\to (H\slsl L)/(L\slsl L)$ is a bijection, so we denote the latter as $H/L$ if necessary.

\begin{rem}
a) The relative profinite completion was introduced by Belyaev \cite{Be}; it is sometimes referred as Belyaev completion, or profinite completion of $H$ localized at the subgroup $L$ (all references I am aware of assume that $H$ is discrete).

b) Let now again $L$ be an open commensurated subgroup of $H$. A closely related construction is the {\bf Schlichting completion} $H\slsl_{\mathrm{Schl}}L$: this is the closure of the image of $H$ in the Polish group of permutations of $H/L$. Actually, this is canonically the quotient of the relative profinite completion by the core of $L\slsl L$ in $H\slsl L$ (that is, the largest normal subgroup of $H\slsl L$ included in the closure $L\slsl L$ of the image of $L$). In particular, this canonical quotient map $H\slsl L\to H\slsl_{\mathrm{Schl}}L$ has a compact kernel. (See for instance \cite[\S 4]{RW} for these facts.)

c) The Schlichting completion is sometimes called relative profinite completion. This choice is, in my opinion, confusing and we did not follow it, notably because $H\slsl H$ is the profinite completion, while $H\slsl_{\mathrm{Schl}}H$ is the trivial group.
An object called ``relative profinite completion" should include the usual profinite completion as a particular case. The Schlichting completion is a practical construction, but is a derived object of the more fundamental relative profinite (or Belyaev) completion.
\end{rem}

\subsection{The stability results}
 
\begin{thm}\label{thaag}
Consider a semirestricted wreath product $G=B\wr^A_X H$ ($H,B$ are locally compact group, $X$ an $H$-set with open point stabilizers, $A$ is a compact open subgroup of $B$).

1) Assume that $B,H$ have the Haagerup Property. Assume that for every point stabilizer $L$ of $X$, $L$ is a commensurated subgroup of $H$ and the relative profinite completion $H\slsl L$ has the Haagerup Property. Then $G$ has the Haagerup Property.

2) Assume that $G$ is compactly generated (when $X$ is non-empty and $A\neq B$, this means that $H,B$ are compactly generated and $X$ has finitely many $H$-orbits). Then the previous statement holds true when all occurrences of ``Haagerup Property" are replaced with ``Property PW".
\end{thm}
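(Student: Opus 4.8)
The plan is to prove both parts at once, by producing --- from a single wall‑space construction --- a proper continuous conditionally negative definite function on $G$ for part~1 (which suffices, by \cite{AW}) and a proper continuous action of $G$ on a space with walls for part~2 (which suffices, cf.\ \cite{CFW}). The construction will be a locally compact adaptation of the wreath‑product wall space of Cornulier--Stalder--Valette \cite{CSV2}, who settle the discrete case $L=1$.

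First I would reduce to a convenient normal form. In part~2 the hypothesis is precisely that $B,H$ are compactly generated and $X$ has finitely many orbits; in part~1 I would reduce to the compactly generated case through the standard reduction of the Haagerup property to open compactly generated subgroups, using that every open compactly generated subgroup of $G$ lies, modulo a compact normal subgroup, in a closed subgroup of a finite product of groups $B_j\wr^{A_j}_{X_j}H_j$ with $B_j,H_j$ compactly generated, $X_j$ a single orbit, and the hypotheses inherited (passing to open subgroups of $H$ preserves the Haagerup property, keeps $L$ commensurated, and preserves the Haagerup property of the relative profinite completion). Thus I may assume $B,H$ compactly generated, $X=H/L$ a single orbit, $L$ commensurated, $H\slsl L$ having the Haagerup property (resp.\ Property~PW). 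Next, the map $(f,h)\mapsto(h,(f,\pi(h)))$, where $\pi\colon H\to\widetilde H:=H\slsl L$ is the canonical homomorphism, is a continuous injective homomorphism of $B\wr^A_{H/L}H$ into $H\times\bigl(B\wr^A_{\widetilde H/\widetilde L}\widetilde H\bigr)$, with $\widetilde L:=L\slsl L$; it is proper because injectivity and properness are visible on the first coordinate. Since $H$ has the Haagerup property (resp.\ PW), it remains to treat $B\wr^A_{\widetilde H/\widetilde L}\widetilde H$, where $\widetilde H$ is again compactly generated (generated by the union of a compact set and $\widetilde L$) with the Haagerup property (resp.\ PW), and --- the decisive point --- $\widetilde L$ is now a \emph{compact} open subgroup, being the profinite completion of $L$.

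For the construction, I would fix a pointed proper continuous action of $\widetilde H$ on a space with measured walls (resp.\ on a CAT(0) cube complex) $(Y,\mathcal W_Y,y_0)$; replacing $y_0$ by a $\widetilde L$‑fixed point (which exists, $\widetilde L$ being compact --- e.g.\ as a circumcentre) gives a $\widetilde H$‑equivariant map $\iota\colon X=\widetilde H/\widetilde L\to Y$, and the weight $w(x):=1+d_{\mathcal W_Y}(y_0,\iota(x))$ on $X$ has the crucial property that $\{x\in X:w(x)\le R\}$ is \emph{finite} for every $R$ --- being the image, in the \emph{discrete} set $X$, of a relatively compact subset of $\widetilde H$. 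I would similarly fix $(Z,\mathcal W_Z,z_0)$ for $B$; since $A$ is compact I may take $z_0$ to be $A$‑fixed, and after replacing $Z$ by $Z\sqcup(B/A)$ I may assume $A$ is exactly the stabiliser of $z_0$, so that $\psi_B(b):=d_{\mathcal W_Z}(z_0,bz_0)^2$ is a continuous, proper, bi‑$A$‑invariant conditionally negative definite function on $B$ vanishing exactly on $A$. Then I would build, as in \cite{CSV2}, the wreath wall space on the set of pairs $(\varphi,y)$ with $\varphi\colon X\to Z$ equal to $z_0$ off a finite set and $y\in Y$, on which $G=B\wr^A_X\widetilde H$ acts in the obvious fashion, taking as walls those pulled back from $\mathcal W_Y$ (which record the $\widetilde H$‑position) together with ``lamp'' walls built from $\mathcal W_Z$ and $\mathcal W_Y$, arranged so that the length of $(f,h)\in G$ is comparable to
\[
\psi_{\widetilde H}(h)\;+\;\sum_{x\in X}w(x)\,\psi_B(f(x)).
\]

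Finiteness of this length is immediate ($\psi_B$ vanishes on $A$ and $f(x)\in A$ for cofinitely many $x$), and continuity on the compact‑open pieces follows from bi‑$A$‑invariance of $\psi_B$. The point is properness: from a bound $\le C$ one gets $\psi_{\widetilde H}(h)\le C$, hence $h$ in a compact set; since $\psi_B$ vanishes \emph{exactly} on $A$, there is $\varepsilon(C)>0$ with $\psi_B\ge\varepsilon(C)$ on the compact set $\psi_B^{-1}([0,C])\smallsetminus A$, so at most $C/\varepsilon(C)$ of the $x$ have $f(x)\notin A$, and each such $x$ satisfies $w(x)\le C/\varepsilon(C)$, i.e.\ lies in the finite set $\{x:w(x)\le C/\varepsilon(C)\}$; hence $(f,h)$ ranges over a relatively compact subset of $G$. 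The step I expect to be the real obstacle is the verification that the ``lamp'' wall family is genuinely $G$‑invariant --- the familiar subtlety being that $y_0$ is not $G$‑fixed, so the naive ``charge lamp $x$ by $w(x)\psi_B(f(x))$'' description has to be recast equivariantly --- together with the continuity and compact‑open‑subgroup bookkeeping needed to run the construction of \cite{CSV2} in the locally compact setting. Finally, Property~PW is read directly off this space with walls for part~2, and the Haagerup property from the associated conditionally negative definite function for part~1.
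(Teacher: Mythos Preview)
Your reduction steps --- to a single orbit, then via the diagonal embedding into $H\times(B\wr^A_{\widetilde H/\widetilde L}\widetilde H)$ to the case of $\widetilde L$ compact open --- match the paper's exactly. One slip: properness of that embedding is not ``visible on the first coordinate'', since the projection to $H$ is neither injective nor proper; the correct check is that the preimage of the compact open set $Q\times(A^{X}\times\widetilde L)$, with $Q\subset H$ compact, lies in the compact set $A^{X}\times Q$.

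The real issue is the core construction. The length $\psi_{\widetilde H}(h)+\sum_xw(x)\,\psi_B(f(x))$ is not what the CSV2 wall space produces, and as you yourself note cannot be made $G$-equivariant, since $w$ is anchored at a basepoint. The paper avoids this obstacle entirely by using two \emph{decoupled} conditionally negative definite functions: the unweighted $\xi'(fh)=\sum_x\xi(f(x))$ (coming from the $\ell^2$-sum of copies of the Hilbert $B$-module, permuted by $\widetilde H$), together with the kernel
\[
D_\mu(1,fh)\;=\;\mu\bigl\{M\subset X:\;M\vdash\mathrm{Supp}_A(f)\cup\{L,hL\}\bigr\},
\]
where $\mu$ is an $\widetilde H$-invariant measured walling on $X=\widetilde H/\widetilde L$ with proper wall distance (obtained from a proper right-$\widetilde L$-invariant conditionally negative definite function on $\widetilde H$ via a Robertson--Steger type argument). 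This $D_\mu$ is measure-definite (hence conditionally negative definite) and manifestly left-$G$-invariant, and a bound $D_\mu\le n$ confines $\mathrm{Supp}_A(f)\cup\{hL\}$ to the finite $n$-ball of $(X,d_\mu)$: this is precisely the job your weight $w$ was meant to do, but done equivariantly by letting the walls on $X$ cut the \emph{support} of $f$ rather than by charging each lamp from a fixed origin. So your ``real obstacle'' is an artefact of trying to couple the lamp walls to the $Y$-walls; uncoupling removes it. Finally, the paper treats Property~PW by a separate explicit commensurated-subset construction rather than through $D_\mu$, precisely because a cut-metric decomposition of $D_\mu$ is not a~priori $G$-invariant --- a subtlety your unified sketch would still have to confront.
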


The proof will be by reduction to the case when $X=H/L$, with $L$ compact open. This latter case can be viewed as a locally compact extension of the case of standard wreath products (slightly more general in the discrete case, where it covers the case with finite stabilizers).

\begin{lem}\label{haag_compact}
Consider a semirestricted wreath product $G=B\wr^A_{H/L} H$, with $L$ compact open in $H$. Assume that $B,H$ have the Haagerup Property. Then $G$ has the Haagerup Property. 
\end{lem}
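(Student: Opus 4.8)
The plan is to reduce to the case where $B$ and $H$ are $\sigma$-compact and then exhibit a proper continuous conditionally negative definite function on $G$. Recall (Akemann--Walter) that a locally compact group has the Haagerup Property as soon as all its open compactly generated subgroups do, and that a $\sigma$-compact locally compact group has the Haagerup Property iff it admits a proper continuous conditionally negative definite function. So I would first treat the case where $B,H$ are $\sigma$-compact with the Haagerup Property, $A$ a compact open subgroup of $B$ and $X$ a countable $H$-set with open point stabilizers. The general case then follows: given an open compactly generated subgroup $G_0\le G$, one uses that every compact subset of $B^{X,A}$ has finite support modulo $A$ to find $\sigma$-compact open subgroups $B_0\le B$ and $H_0\le H$ (still with the Haagerup Property) containing respectively $A$ and $L$, and an $H_0$-invariant countable subset $X_0\subseteq X=H/L$ with finitely many $H_0$-orbits, such that $G_0$ embeds as a topological subgroup --- hence a closed subgroup, being locally compact --- of $(B_0\wr^A_{X_0}H_0)\times A^{X\setminus X_0}$. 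Since the Haagerup Property passes to direct products and to closed subgroups, since $A^{X\setminus X_0}$ is compact, and since $B_0\wr^A_{X_0}H_0$ embeds as a closed subgroup (a fibre product over $H_0$) in a finite direct product of groups $B_0\wr^A_{H_0/L_i}H_0$ with each $L_i$ compact open in $H_0$, it suffices to settle the $\sigma$-compact, single-orbit case.

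The crux is to produce a proper continuous conditionally negative definite function $\phi_B$ on $B$ whose zero set is \emph{exactly} $A$. Since $B$ is $\sigma$-compact with the Haagerup Property there is a proper continuous conditionally negative definite function $\phi_1$ on $B$; as $A$ is compact, the restriction to $A$ of the associated affine isometric action has a fixed point, so after replacing $\phi_1$ by a cohomologous (hence still proper) function we may assume $\phi_1|_A=0$. Separately, let $B$ act on $\ell^2(B/A)$ by the quasi-regular representation $\pi$ and let $v$ be the Dirac mass at the trivial coset; then $\phi_2(b)=\|v-\pi(b)v\|^2$ is continuous (as $A$ is open), conditionally negative definite, bounded, and vanishes exactly on $A$. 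Now $\phi_B:=\phi_1+\phi_2$ is proper, continuous, conditionally negative definite, and vanishes exactly on $A$. Also fix any proper continuous conditionally negative definite function $\phi_H$ on $H$.

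Then I would combine these into
\[
\Psi\colon\;B\wr^A_XH\to\R_{\ge 0},\qquad \Psi(f,h)=\sum_{x\in X}\phi_B\big(f(x)\big)+\phi_H(h),
\]
which is a finite sum since $\phi_B(f(x))=0$ for all but finitely many $x$. The map $f\mapsto\sum_x\phi_B(f(x))$ is an $H$-invariant conditionally negative definite function on the normal subgroup $B^{X,A}$; since the $B^{X,A}$-part of $(f_1,h_1)^{-1}(f_2,h_2)$ is $h_1^{-1}\cdot(f_1^{-1}f_2)$, invariance makes this function pull back to a conditionally negative definite function on $G$. Adding the pullback of $\phi_H$ along $G\to H$ shows $\Psi$ is conditionally negative definite; and $\Psi$ is continuous because on each coset of the open subgroup $A^X$ the first sum has only finitely many nonzero terms. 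For properness: if $\Psi(f,h)\le R$, then $\phi_H(h)\le R$ puts $h$ in a fixed compact subset of $H$, while $\sum_x\phi_B(f(x))\le R$ puts every $f(x)$ in the compact set $C_R=\{\phi_B\le R\}$ and bounds the number of coordinates with $f(x)\notin A$ by $R/\delta$, where $\delta=\inf\{\phi_B(b):b\in C_R\setminus A\}>0$. Hence such $f$ meet only finitely many cosets of the compact open subgroup $A^X$, i.e.\ lie in a relatively compact subset of $B^{X,A}$, so $\Psi$ is proper and $G$ has the Haagerup Property.

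The step I expect to be the main obstacle is the reduction to the $\sigma$-compact case: an open compactly generated subgroup of $G$ need not respect the wreath decomposition --- in particular the acting subgroup may have infinitely many orbits on $X=H/L$ --- so making the embedding above precise requires some bookkeeping. The other point to watch is that the zero set of $\phi_B$ must be exactly $A$ and not a possibly strictly larger (still compact) subgroup: otherwise the coordinates of $f$ could drift inside that zero set and escape to infinity in $B^{X,A}$ while $\Psi$ stays bounded, so properness would fail --- this is precisely what the auxiliary function $\phi_2$ prevents.
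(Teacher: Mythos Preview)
Your conditionally negative definite function $\Psi$ is genuinely CND and continuous, but the properness argument has a real gap. You correctly show that if $\Psi(f,h)\le R$ then each individual $f$ has $A$-support of size at most $R/\delta$. However, you then conclude that ``such $f$ meet only finitely many cosets of $A^X$'', and this is false: nothing in $\Psi$ controls \emph{where} in $X=H/L$ the support lies. Concretely, fix any $b\in C_R\smallsetminus A$ and let $\delta_x(b)\in B^{X,A}$ be the function supported at the single point $x$ with value $b$. Then $\Psi(\delta_x(b),1)=\phi_B(b)\le R$ for every $x\in X$, yet as $x$ ranges over the infinite set $X$ these elements lie in pairwise distinct cosets of $A^X$, hence escape every compact set. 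So $\Psi$ is not proper.

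This is precisely the difficulty the paper's proof is designed to overcome. Your $\Psi$ is essentially the combination the paper calls $\xi'+\phi_H\circ\pi$ (your extra term $\phi_2$ sharpens $\xi'$ to detect the support size, but that is still not enough). The missing ingredient is a second kernel that pins down the \emph{location} of $\mathrm{Supp}_A(f)$ relative to the basepoint~$L$. The paper obtains this from a measured walling on $H/L$: starting from a proper right-$L$-invariant conditionally negative definite function on $H$ (hence an $H$-invariant kernel $\psi$ on $H/L$), one takes its square root and realizes it, via a Robertson--Steger type result, as the wall pseudodistance $d_\mu$ for an $H$-invariant Radon measure $\mu$ on $2^{H/L}_*$. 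One then sets
\[
D_\mu(f_1h_1,f_2h_2)=\mu\bigl\{M\subset H/L:\; M\vdash \mathrm{Supp}_A(f_1^{-1}f_2)\cup\{h_1L,h_2L\}\bigr\},
\]
which is left-invariant, measure-definite (hence CND), and has the key feature that $D_\mu(1,fh)\le n$ forces $\mathrm{Supp}_A(f)\cup\{hL\}$ into the finite $d_\mu$-ball of radius $n$ around $L$. Adding $D_\mu(1,\cdot)$ to your $\Psi$ (or to the paper's $\xi'$) then gives a proper CND function. Without something playing the role of $D_\mu$, your plan cannot close.
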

\begin{proof}

First recall that a $H$-invariant walling on $H/L$ means an $H$-invariant Radon measure $\mu$ on the locally compact space $2^{H/L}_*=2^{H/L}\smallsetminus\{\emptyset,H/L\}$. To such a walling, we can associate the pseudo-distance $d_\mu$ on $H/L$ defined by
\[d_\mu(x,x')=\mu\big\{M\subset H/L:\; M\vdash \{x,x'\}\big\}.\]
Here $\vdash$ reads as ``cuts" and $M\vdash N$ means that both intersections $M\cap N$ and $M\cap N^c$ ($N^c$ being the complement) are non-empty.

Let us now prove the result. Using that the Haagerup property is stable under directed unions of open subgroups \cite[Prop.\ 6.1.1]{CCJJV}, we first reduce to the case when $G$ is $\sigma$-compact. Namely, we need to show that every $\sigma$-compact open subgroup $U$ of the given semirestricted product $G=B^{H/L,A}\rtimes H$ is included in an open subgroup that is also, after modding out by a compact normal subgroup, a semirestricted product of the same form, but in addition $\sigma$-compact. (Taking the quotient by a compact normal subgroup does not matter because the Haagerup property is clearly invariant under taking extensions by compact kernels.)

Denote by $(B/A)^{(H/L)}$ the set of functions from $H/L$ to $B/A$ with finite support, where the support is the set of points on which the value differs from the basepoint of $B/A$ (that is, the trivial left $A$-coset). The projection $B\to B/A$ induces a continuous projection $\rho:B^{H/L,A}\to (B/A)^{(H/L)}$. Also denote by $\pi_1$ and $\pi_2$ the projections from $B^{H/L,A}\rtimes H$ to $B^{H/L,A}$ and $H$ defined by $\pi_1(f,h)=f$, $\pi_2(f,h)=h$. These are continuous maps.

Since $U$ is $\sigma$-compact and $(B/A)^{(H/L)}$ is discrete, $\rho(\pi_1(U))$ is countable,  and hence included in $(B_1/A)^{(V/L)}$ for some open, $\sigma$-compact subgroup $B_1$ of $B$, including $A$, and some open, $\sigma$-compact subgroup $V$ of $H$, including $L\cup\pi_2(U)$. Then $U$ is included in the open subgroup $B_1\wr_{H/L}^AV$, and actually sits in a smaller open subgroup, namely the semidirect product $G_1=(B_1^{(V/L)}A^{H/L})\rtimes V$. Define $Y=H/L\smallsetminus V/L$. Then 
\[G_1=(B_1^{V/L,A}\times A^Y)\rtimes V.\] Observing that $Y$ is $V$-invariant, we deduce that $A^Y$ is a compact normal subgroup of $G_1$ and $G_1/A^Y$ is isomorphic to the $\sigma$-compact semirestricted wreath product $B_1\wr_{V/L}^AV$. This terminates the proof of the reduction to the case when $H$ and $B$ are $\sigma$-compact.

Since $H$ has the Haagerup Property and is $\sigma$-compact, it admits a continuous proper conditionally definite function. By averaging by the compact subgroup $L$, we can choose it to be right $L$-invariant. Thus it yields a proper $H$-invariant kernel $\psi$ on $H/L$. Set $\kappa=\sqrt{\psi}$. By \cite[Proposition 2.8(iii)]{CSV2} (which is strongly inspired by Robertson-Steger \cite[Proposition 1.4]{RS}), there exists an $H$-invariant walling $\mu$ on $H/L$ such that $\kappa=d_\mu$. 

For $f\in B^{H/L,A}$, define $\mathrm{Supp}_A(f)=\{x\in H/L:f(x)\notin A\}$. Define $D_\mu:G\times G\to\R$ as follows: for $f_i\in B^{H/L,A}$, $h_i\in H$, $i=1,2$,
\[D_\mu(f_1h_1,f_2h_2)=\mu\big\{M\subset H/L: M\vdash\mathrm{Supp}_A(f_1^{-1}f_2)\cup\{h_1L,h_2L\}\big\}.\]

The difference with the case of standard wreath products from \cite{CSV2} is that we have $\{h_1L,h_2L\}$ instead of $\{h_1,h_2\}$ and $\mathrm{Supp}_A$ instead of $\mathrm{Supp}$. The proof follows, however, the same lines.

Then $D_\mu$ is well-defined, continuous, and left-invariant. It is well-defined because the set $\mathrm{Supp}_A(f_1^{-1}f_2)\cup\{h_1L,h_2L\}$ is finite and the set of $M$ cutting a given finite subset is a compact open subset of $2^{H/L}_*$. It is continuous because it is locally constant. It is obviously left-invariant by $B^{H/L,A}$. Finally, it is $H$-invariant, by an immediate verification using that $\mu$ is $H$-invariant.

Recall that a map $\Psi:Y\times Y\to\R$ is measure-definite if it is $L^1$-embeddable, in the sense that there is a map $\varphi$ from $Y$ to some $L^1$-space such that $\Psi(y,y')=\|\varphi(y)-\varphi(y')\|$ for all $y,y'\in Y$ (that is, $\Psi$ is isometric, although not necessarily injective). It is well-known that measure-definite implies conditionally negative definite (see \cite{RS}, from which the terminology is borrowed).

We claim that $D_\mu$ is measure-definite, that is, isometrically embeddable into $L^1$ (this, regardless of the assumption that $\mu$ is $H$-invariant). This fact being closed under combinations and pointwise limits, it is enough to check it when $\mu=\delta$ is a Dirac measure at some $M\subset H/L$. Here $D_\delta(f_1h_1,f_2h_2)$ is $1$ or 0 according to whether $M$ cuts $\mathrm{Supp}_A(f_1^{-1}f_2)\cup\{h_1L,h_2L\}$. Being a $\{0,1\}$-valued kernel, that $D_\delta$ is a pseudo-distance is equivalent to the condition that being at $D_\delta$-distance zero is an equivalence relation; the easy argument, already performed in \cite{CSV2}, is left to the reader. Next, a $\{0,1\}$-valued pseudo-distance is obviously measure-definite.

If $D_\mu(1,fh)\le n$, then $\mu\{M:M\vdash\mathrm{Supp}_A(f)\cup\{L,hL\}\}\le n$. 
If $F$ is a finite subset, then $\mu\{M:M\vdash F\}\ge\sup_{x,y\in F}\mu\{M:M\vdash\{x,y\}$. Hence for all $x,y\in \mathrm{Supp}_A(f)\cup\{L,hL\}$, we have 
$d_\mu(x,y)\le n$. In particular, $\mathrm{Supp}_A(f)\cup\{hL\}$ is included in the $n$-ball $B_n$ of $H/L$, which is finite by properness.

To conclude, we combine with another affine action. Using that $B$ is $\sigma$-compact, we can fix one continuous proper affine isometric action of $B$ on a Hilbert space $\mathcal{H}$; we can suppose that $A$ fixes 0. Let $\xi$ be the corresponding conditionnally negative definite function on $B$. Then let $B^{H/L,A}$ act on the $\ell^2$-sum $\bigoplus_{x\in H/L}\mathcal{H}_x$ of copies of $x$, the $x$-th component in $B^{H/L,A}$ acting on the $x$-component of the direct sum by the given action, and trivially on other components. The action extends to an action of the semirestricted wreath product, $H$ permuting the components. The resulting conditionally negative definite function is $\xi'(fh)=\sum_{x\in H/L}\xi(f(x))$; it is continuous. 

If $\xi'(fh)\le n$, then $\xi(f(x))\le n$ for all $x$. In particular, if both $D_\mu(1,fh)\le n$ and $\xi'(fh)\le n$, then $\mathrm{Supp}_A(f)\subset B_n$ and in $B_n$, $f$ takes values in the compact subset $\xi^{-1}([0,n])$ and $h\in B_n$. We see that this forces $fh$ to belong in some compact subset of $G$. Thus the continuous conditionally negative definite function $fh\mapsto D_\mu(1,fh)+\xi'(fh)$ is proper on $G$.
\end{proof}

\begin{lem}\label{pw_compact}
Consider a semirestricted wreath product $G=B\wr^A_{H/L} H$, with $L$ compact open in $H$. Assume that $B,H$ have the Property PW. Then $G$ has Property PW. 
\end{lem}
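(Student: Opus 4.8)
The plan is to transcribe the proof of Lemma~\ref{haag_compact} into the combinatorial language underlying Property PW: affine isometric actions on Hilbert spaces become actions on CAT(0) cube complexes, conditionally negative definite functions become displacement functions $g\mapsto\#(M\triangle gM)$, and wallings become integer (counting) wallings. No preliminary reduction is needed here, since Property PW implies $\sigma$-compactness; in particular $B$, $H$ and hence $G$ are $\sigma$-compact, and by the characterisation recalled in \S\ref{HaPW} I may fix metrically proper continuous actions of $B$ and of $H$ on CAT(0) cube complexes $C_B$ and $C_H$.

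Both new ingredients rest on one observation: a compact group $K$ acting continuously on a CAT(0) cube complex $C$ fixes a cube (it has finite orbits on vertices, the combinatorial convex hull of such an orbit is a finite $K$-invariant subcomplex on which $K$ acts through a finite quotient, and a finite group acting on a CAT(0) cube complex fixes a cube), and hence fixes a vertex of the cubical subdivision $C'$ — the barycentre of that cube; here $C'$, obtained by subdividing each cube $[0,1]^k$ along the hyperplanes $x_i=\tfrac12$, is again a CAT(0) cube complex on which any group acting so on $C$ still acts metrically properly and continuously. Applying this to $L\curvearrowright C_H$ gives a vertex $v$ of $C_H'$ fixed by $L$, hence an $H$-equivariant map $\iota\colon H/L\to(C_H')^{(0)}$, $hL\mapsto hv$; pulling back along $\iota$ the hyperplane structure of $C_H'$ produces an $H$-invariant walling $\mathcal W$ on $H/L$ (the analogue of the pair $(\psi,\mu)$), for which the number of walls separating $x$ from $x'$ is the number of hyperplanes of $C_H'$ separating $\iota(x)$ from $\iota(x')$, so that — since $L$ fixes $v$ and $H\curvearrowright C_H'$ is metrically proper — the sets $B_n:=\{x\in H/L:\#\{W\in\mathcal W:W\vdash\{L,x\}\}\le n\}$ are finite with union $H/L$. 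Applying the observation to $A\curvearrowright C_B$ gives a vertex $w$ of $C_B'$ fixed by $A$; letting $Z$ be the discrete $B$-set of halfspaces of $C_B'$ and $N:=\{\eta\in Z:w\in\eta\}$, one has $AN=N$ (since $Aw=w$), while $\#(N\triangle bN)$ — twice the number of hyperplanes of $C_B'$ separating $w$ from $bw$ — is a proper function of $b\in B$ that is bi-$A$-invariant and vanishes on $A$. This pair $(Z,N)$ is the replacement for ``a proper affine $B$-action with $A$ fixing $0$''.

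I would then assemble a $G$-action on a discrete set from two pieces. \emph{Piece 1}: copying the definition of $D_\mu$, put
\[D(f_1h_1,f_2h_2)=\#\bigl\{W\in\mathcal W:\ W\vdash\mathrm{Supp}_A(f_1^{-1}f_2)\cup\{h_1L,h_2L\}\bigr\};\]
exactly as in Lemma~\ref{haag_compact} this is finite-valued, locally constant, left-$G$-invariant, and $D(1,fh)\le n$ forces $\mathrm{Supp}_A(f)\cup\{hL\}\subseteq B_n$. Being built from bipartitions via the $G$-invariant counting measure on $\mathcal W$, $D$ arises from a $G$-invariant wall structure on $G$ and hence, in standard fashion (cf.~\cite{CSV1,CSV2}), from a continuous $G$-action on a discrete set with a distinguished subset $M_1$ for which $\#(M_1\triangle gM_1)$ is, up to a positive constant, $D(1,g)$. \emph{Piece 2}: let $G$ act on $\Omega:=\bigsqcup_{x\in H/L}Z_x$ (copies of $Z$) by $(f,h)\cdot(x,\eta)=(hx,f(hx)\eta)$ — a continuous action, the stabiliser of $(x,\eta)$ being the open subgroup $\{(f,h):h\in H_x,\ f(x)\in\mathrm{Stab}_B(\eta)\}$ — with distinguished subset $M:=\bigsqcup_xN_x$; since $M\triangle gM=\bigsqcup_x\{x\}\times(N\triangle f(x)N)$ one gets $\#(M\triangle gM)=\sum_{x\in\mathrm{Supp}_A(f)}\#(N\triangle f(x)N)<\infty$, using $AN=N$ and that $f(x)\in A$ for all but finitely many $x$. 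On the disjoint union of the two pieces the displacement function $fh\mapsto\#(M\triangle(fh)M)+\#(M_1\triangle(fh)M_1)$ is then finite-valued, continuous and proper: if it is $\le n$ then $D(1,fh)$ is bounded in terms of $n$, forcing $hL$ and $\mathrm{Supp}_A(f)$ into a fixed finite ball $B_{n'}\subseteq H/L$ (so $h\in B_{n'}L$, compact since $L$ is), and $\#(M\triangle(fh)M)\le n$ then forces $f$ to take values on $B_{n'}$ in the compact set $\{b\in B:\#(N\triangle bN)\le n\}$; hence $fh$ ranges over a compact subset of $G$, so $G$ has Property PW.

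The step I expect to be the main obstacle — and the only genuinely new point over Lemma~\ref{haag_compact} — is producing the $A$-invariant PW structure $(Z,N)$ on $B$: in the Haagerup proof one simply translates the affine action so that the compact group $A$ fixes the origin, whereas here one must exhibit a PW structure on $B$ whose defining halfspace is $A$-invariant, which is exactly where compactness of $A$ is used, through ``fixed point $\rightsquigarrow$ fixed cube $\rightsquigarrow$ fixed vertex of the cubical subdivision''. A secondary, routine point is promoting the kernel $D$ from merely measure-definite (which sufficed for the Haagerup case) to an honest $G$-action on a discrete set, as Property PW requires; this parallels \cite{CSV1,CSV2}.
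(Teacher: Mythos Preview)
Your overall strategy is right, and Piece~2 is essentially the paper's second commensurating action. But you have the difficulty exactly inverted, and Piece~1 as written has a gap.

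What you call the ``main obstacle'' --- making the PW data $L$- and $A$-invariant --- is immediate and needs no cubical subdivision: since $h\mapsto\#(M\triangle hM)$ is continuous, the stabiliser of $M$ is open, so a compact subgroup has only finitely many translates of $M$, and one replaces $M$ by their intersection. The paper disposes of this in a single clause (``We can suppose that $M$ is $L$-invariant'').

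Conversely, the step you label ``secondary, routine'' is the actual content, and the paper says so explicitly at the outset of its proof: for Property~PW it is \emph{not} known to suffice that $D$ be a left-invariant sum of cut-metrics; one needs the decomposition itself to be $G$-invariant. Your justification ``built from bipartitions via the $G$-invariant counting measure on $\mathcal W$'' does not give this. Each $W\in\mathcal W$ is a bipartition of $H/L$, not of $G$; the associated $\{0,1\}$-pseudometric $D_W$ on $G$ has infinitely many zero-classes (indexed by the side of $W$ containing $hL$ together with $\bar f|_{W^c}\in(B/A)^{(W^c)}$) as soon as $B\neq A$, so it is not a cut-metric, and refining it into cuts $G$-equivariantly is exactly the work to be done. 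The paper does this by an explicit construction: the $G$-set $Z=\{(y,p):y\in Y,\ \mathrm{Supp}(p)\subset W_y^c\}$ with commensurated subset $M\times\{1\}$, adapting \cite{CSV1} and \cite[Prop.~4.G.2]{CFW}. Your citation of \cite{CSV2} points to the measure-theoretic Haagerup argument, which produces no commensurating action; \cite{CSV1} treats only the standard wreath product and still requires the adaptations to $H/L$ and to $\mathrm{Supp}_A$ that form the bulk of the paper's proof.
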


\begin{proof}
The proof could be expected to work along the same lines as for Property PW. The problem is that for Property PW, we do not know if it is enough to have a left-invariant continuous proper distance that is a sum of cut-metrics: Property PW indeed requires that the decomposition under cut-metrics be invariant by left-translation, which is not a priori clear. This means that we have to be more explicit, essentially following the discrete case \cite{CSV1}, or rather its version in terms of commensurating actions given in \cite[Proposition 4.G.2]{CFW}.

Consider a continuous discrete $H$-set $Y$ with commensurated subset $M$, in the sense that $\ell_0(h)=M\bigtriangleup hM$ is finite for all $h\in H$. We will eventually assume that $\ell_0$ is proper on $H$. We can suppose that $M$ is $L$-invariant. For $y\in Y$, define $W_y=\{hL\in H/L:y\in hM\}$ (indeed $y\in hM$ is a right-$H$-invariant condition on $h$). Let $Z$ be the set of pairs $(y,p)$, where $y\in Y$ and $p$ is a function $H/L\to B/A$ with $\mathrm{Supp}(p)$ finite and included in the complement $W_x^c$ of $W_x$.

We want to define an action of $B\wr^A_{H/L}H$ on $Z$. We begin defining 
\[h\cdot (y,p)=(hy,h\cdot p);\quad f\cdot (y,p)=(y,\bar{f}|_{W_y^c}p),\quad h\in H,f\in B^{H/L,A},\]
$\bar{f}$ being the image of $f$ in $(B/A)^{(H/L)}$.
First, for $(y,p)\in Z$, observe that \[\mathrm{Supp}_A(h\cdot p)=h\,\mathrm{Supp}_A(p)\subset hW_y^c=W_{hy}^c.\] 
Thus $h\cdot (y,p)\in Z$. That $f\cdot(y,p)\in Z$ is clear. That these maps define actions of $H$ and $B^{H/L,A}$ is clear; both actions have open stabilizers and thus are continuous.
To see that this extends to an action of the topological semidirect product, we have to compute, for $h\in H,f\in B^{H/L,A}$ 
\[h\cdot f\cdot h^{-1}\cdot(y,p)=h\cdot f\cdot(h^{-1}y,h^{-1}\cdot p)\]
\[=h\cdot (h^{-1}y,\bar{f}|_{W^c_{h^{-1}y}}h^{-1}\cdot p)=(y,h\cdot\bar{f}|_{W^c_{h^{-1}y}}p);\]
we have $h\cdot\bar{f}|_{W^c_{h^{-1}y}}=(h\cdot f)|_{hW^c_{h^{-1}y}}=(h\cdot f)|_{W^c_y}=(hfh^{-1})|_{W^c_y}$. Thus $h\cdot f\cdot h^{-1}\cdot(y,p)=(hfh^{-1})\cdot (y,p)$ and this duly yields a continuous action of the semidirect product.

Define $N=M\times\{1\}$, $\ell_0(h)=\#(M\bigtriangleup hM)$ and $\ell(g)=\#(N\bigtriangleup gN)$. Let us check that $\ell$ takes finite values.
By subadditivity, it is enough to check that $\ell$ takes finite values on both $H$ and $B^{H/L,A}$. For $h\in H$, we have $hN\bigtriangleup N=(hM\bigtriangleup M)\times\{1\}$ and hence $\ell$ is finite on $H$, where it coincides with $\ell_0$.
We have $f\cdot (y,p)\in N$ if and only if $y\in M$ and $\bar{f}_{W^c_y}p=1$.
Thus $(y,p)\in N\smallsetminus f^{-1}N$ if and only if $y\in M$, $p=1$, and $\bar{f}_{W^c_y}\neq 1$. The latter means that $W^c_y\cap\mathrm{Supp}_A(f)\neq\emptyset$. For a given element $hL\in \mathrm{Supp}_A(f)\subset H/L$, the condition $hL\in W^c_y$ means $hL\notin W_y$, that is, $y\notin hM$. Thus we have
\[N\smallsetminus f^{-1}N=\{(m,1):m\in\bigcup_{hL\in \mathrm{Supp}_A(f)}M\smallsetminus hM.\}\]
Thus $f^{-1}N\smallsetminus N=f^{-1}(N\smallsetminus fN)$ is also finite. So $\ell$ takes finite values on both $B^{H/L,A}$ and $H$, and hence on all of $G$, by sub-additivity.

We have $(M\bigtriangleup hM)\times\{1\}\subset N\bigtriangleup fhN$. Thus $\ell(fh)\ge\ell_0(h)$.
Also since for each fixed $hL\in\mathrm{Supp}_A(f)$, the subset $N\smallsetminus f^{-1}N$ includes $(M\smallsetminus hM)\times\{1\}$, we have $\ell(f)\ge \#(M\smallsetminus hM)$.

We can assume from the beginning that $M\smallsetminus hM$ and $hM\smallsetminus M$ have the same cardinal for all $h$ (replace if necessary $Y$ with $Y\times\{0,1\}$ and $M$ with $M\times\{0\}\cup M^c\times\{1\}$).
Under this assumption (made for convenience), we have $2\ell(f)\ge \ell(h)$ for all $hL\in\mathrm{Supp}_A(f)$. In other words, $\mathrm{Supp}_A(f)$ is included in the set of $hL$ such that $\ell_0(h)\le 2\ell(f)$. Assuming from now one that $\ell_0$ is proper, this set is finite.

Let $(f_nh_n)$ be a sequence in $G$, with $f_n\in B^{H/L,A}$ and $h_n\in H$, such that $\ell(f_nh_n)$ is bounded, the property $\ell(f_nh_n)\ge\ell_0(h_n)$ and the properness of $\ell_0$ ensures that $(h_n)$ is bounded. Hence $\ell(f_n)\le\ell(f_nh_n)+\ell(h_n)$ is also bounded, say by $k_0$. Let $F(k_0)$ be the set of $hL\in H/L$ such that $\ell_0(h)\le 2k_0$; it is finite by properness of $\ell_0$. Then $\mathrm{Supp}_A(f_n)$ is included in $F(k_0)$. 

If $B$ is compact, this shows that $\ell$ is proper on $G$ and we are done. In general, we consider another, simpler commensurating action of $G$, so as to also make use of the assumption that $H$ has Property PW. Namely, start from a commensurating action of $B$, say on $Y'$, commensurating a subset $M'$, such that the function $\ell_1:b\mapsto \#(M'\bigtriangleup bM')$ is proper on $B$. We can suppose that $M'$ is $A$-invariant.

Let $B^{H/L,A}$ act on $Z'=Y'\times H/L$ by $f\cdot (y,x)=(f(x)y,x)$ (we can think of $Z'$ as disjoint copies of $Y'$ on which the factors in $B^{H/L,A}$ act separately). This action has open stabilizers and hence is continuous. It commensurates the subset $N'=M'\times H/L$. The group $H$ also acts on $Z'$ by permuting copies; this action is continuous and preserves $M'$; clearly it extends the semirestricted wreath product. For $g\in B\wr^A_{H/L}H$, define $\ell'(g)=\#(M'\bigtriangleup gM')$. Then $\ell'(fh)=\ell'(f)$ for all $f\in B^{H/L,A}$ and $h\in H$. Precisely, $\ell'(fh)=\sum_{\gamma L\in H/L}\ell_1(f(\gamma L))$. In particular, if $\ell'(fh)\le k_1$, then $f$ is valued in $K_1=\ell_1^{-1}([0,k_1])$, which is compact by properness of $\ell_1$. 

Therefore, if $\ell(f_nh_n)+\ell'(f_nh_n)$ is bounded, then $(h_n)$ is bounded, and for some $k_0$ as above, $\mathrm{Supp}_A(f_n)$ is included in $F(k_0)$ and $f$ takes values in some compact subset $K_1$. This means that $f_nh_n$ stays in some compact subset of $B\wr^A_{H/L}H$, showing the properness of $\ell+\ell'$.
\end{proof}

\begin{proof}[Proof of Theorem \ref{thaag}]
If $X=\emptyset$ or $B=A$, the quotient of $G$ by the compact normal subgroup $B^X$ is isomorphic to $H$, and the Haagerup/PW Property follows. We can thus suppose, in each case, that $X\neq\emptyset$ and $B\neq A$.

Recall that a group has the Haagerup Property if and only if all its compactly generated open subgroups do. So we can suppose that $G$ is compactly generated (in the PW case this is an assumption); in particular $H,B$ are compactly generated and $X$ has finitely many $H$-orbits: $X=\bigsqcup_{i=1}^n X_i$. Then $G$ embeds as a closed subgroup in $\prod_{i=1}^n B\wr^A_{X_i}H$, which reduces to the transitive case: $X=H/L$. By assumption, $L$ is commensurated.

Consider the diagonal embedding $G\to (B\wr^A_{H/L}(H\slsl L))\times H$.
It is proper: indeed, if $Q$ is a compact neighborhood of 1 in $H$, the inverse image of 
the compact neighborhood of $(A^{H/L}\times (L\slsl L))\times Q$ of 1 is included in the subset $A^{H/L}\times Q$. (We use here that a continuous homomorphism between locally compact group is proper if and only if the inverse image of some compact neighborhood of 1 is compact.)

Therefore, it is enough to show that $B\wr^A_{H/L}(H\slsl L)$ has the Haagerup Property, resp.\ Property PW. In other words, we are reduced to the case when $L$ is compact. This is the contents of Lemma \ref{haag_compact} in the Haagerup case and Lemma \ref{pw_compact} in the PW case. 
\end{proof}

We leave the converse of Theorem \ref{thaag}, for the Haagerup Property, as a conjecture.

\begin{conj}\label{conj_hagrec}
Under the assumptions that all point stabilizers are commensurated subgroups of $H$ (and assuming $A\neq B$), the converse holds: if $B\wr^A_X H$ has the Haagerup Property, then for every point stabilizer $L$ of $X$, the relative profinite completion $H\slsl L$ has the Haagerup Property.
\end{conj}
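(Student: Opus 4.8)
My plan is to reduce the conjecture to the problem of producing an invariant proper walling on $H/L$ out of a proper conditionally negative definite function on $G$ — i.e.\ to ``running the construction of Lemma~\ref{haag_compact} backwards''. First, the routine reductions. Since the Haagerup Property passes to closed subgroups, and since for an $H$-invariant splitting $X=X_1\sqcup X_2$ the set of $(f,h)\in B\wr^A_XH$ with $f(x)=1$ for all $x\in X_2$ is a closed subgroup isomorphic to $B\wr^A_{X_1}H$, we may handle one $H$-orbit at a time and assume $X=H/L$ with $L$ commensurated. Using that the Haagerup Property is detected on compactly generated open subgroups and is stable under directed unions of open subgroups, the reduction performed at the start of the proof of Lemma~\ref{haag_compact} lets us assume $B$ and $H$, hence also $H\slsl L$, are $\sigma$-compact. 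Finally, when $L$ is compact the map $H\to H\slsl L$ is onto with compact kernel, so $H\slsl L$ inherits the Haagerup Property from the closed subgroup $H\le G$; thus we may assume $L$ is non-compact.

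The key reformulation is that, writing $o\in H/L$ for the base point, and since $L\slsl L$ is compact open in $H\slsl L$ with $H/L\cong(H\slsl L)/(L\slsl L)$, the group $H\slsl L$ has the Haagerup Property \emph{if and only if} the action of $H\slsl L$ on $H/L$ admits an invariant measured walling $\mu$ (an invariant Radon measure on $2^{H/L}_*$) all of whose $d_\mu$-balls are finite: the ``only if'' by averaging a proper continuous conditionally negative definite function on $H\slsl L$ over the compact group $L\slsl L$, descending it to an invariant proper kernel on $H/L$, and invoking \cite[Prop.~2.8]{CSV2} exactly as in Lemma~\ref{haag_compact}; the ``if'' because $\eta\mapsto d_\mu(o,\eta o)$ is then a proper continuous conditionally negative definite function on $H\slsl L$. (An $H$-invariant Radon measure on $2^{H/L}_*$ is automatically $(H\slsl L)$-invariant, $H$ being dense and the $(H\slsl L)$-action on $2^{H/L}_*$ being continuous.) So the task becomes: \emph{given a proper continuous conditionally negative definite function $\psi$ on $G$, construct an $H$-invariant measured walling on $H/L$ with finite $d_\mu$-balls.} Equivalently, since $H\slsl L$ embeds as a closed subgroup of the locally compact group $G':=B\wr^A_{H/L}(H\slsl L)$ and $G$ is dense in $G'$ (though, $L$ being non-compact, not a topologically embedded subgroup), it would suffice to prove that $G$ having the Haagerup Property forces $G'$ to have it.

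I would attack this by mimicking, in reverse, the proof of Lemma~\ref{haag_compact}: from a proper affine isometric action $G\curvearrowright\mathcal{H}$, use the commuting family of copies $\delta_x(B)\cong B$ ($x\in H/L$) to extract a ``$B$-proper'' affine datum at each coordinate, and use the way these data sit relative to the $H$-action to read off a walling of $H/L$ recording how far apart the ``active'' coordinates of an element are — this being precisely the information that was packaged into $\xi'$ and $D_\mu$ in Lemma~\ref{haag_compact}. (Restricting $\psi$ to the closed subgroups $H$ and $\delta_o(B)$ already gives the Haagerup Property of $H$ and of $B$; the walling on $H/L$ is the one missing ingredient.) The obstruction, and the reason the naive attempt fails, is that the pulled-back kernel $k(x,y)=\psi\big(\delta_x(b_0)^{-1}\delta_y(b_0)\big)$ on $H/L$ (for any $b_0\in B\smallsetminus A$) is conditionally negative definite but not $H$-invariant, since conjugation by $H$ is not isometric for $\psi$, and there is no compact group to average over that would restore $H$-invariance. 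Upgrading such a non-equivariant datum to a genuine $H$-invariant proper walling on $H/L$ — equivalently, transferring the Haagerup Property across the dense inclusion $G\subset G'$ — is the main obstacle: the Haagerup Property is not in general inherited by overgroups containing a given group densely, so the proof must genuinely exploit the wreath structure, namely the abundance of commuting copies of $B$ indexed by $H/L$ together with the $H$-equivariant consistency of their affine data, in order to make visible the compact ``profinite-completion direction'' $L\slsl L$ that $G$ itself encodes only implicitly, through the interaction of $B$ with $H$. A first, more tractable, target would be the Property PW analogue of the conjecture, where wallings are the primitive objects and the construction of Lemma~\ref{pw_compact} is closer to being reversible.
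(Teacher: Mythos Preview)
This statement is labelled a \emph{conjecture} in the paper and is not proved there. What the paper does provide is: (i) a reduction (Remark~\ref{reduc}) to the case $A=1$, $B$ discrete cyclic of prime order, and $X=H/L$ transitive; and (ii) a proof in the special case when $L$ is \emph{normal} in $H$, via a route quite different from yours --- passing to the relative Haagerup Property of the pair $(B^{(H/L)}\rtimes H,\,B^{(H/L)})$, invoking \cite{CT} to replace $H$ by $H/L$ in that pair, and then a direct argument showing that a conditionally negative definite function proper on $B^{(H/L)}$ is automatically proper on all of $B^{(H/L)}\rtimes (H/L)$.

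Your proposal is not a proof but an outline that correctly isolates the obstruction. The reductions to the transitive and $\sigma$-compact cases are fine, as is your disposal of the case $L$ compact. The reformulation of the goal as the construction of an $H$-invariant proper walling on $H/L$ is a reasonable restatement. However, the step you yourself flag as ``the main obstacle'' --- upgrading a non-$H$-invariant conditionally negative definite kernel on $H/L$ to an $H$-invariant one, or equivalently transferring the Haagerup Property along the dense-image map $G\to G'=B\wr^A_{H/L}(H\slsl L)$ --- is precisely the open content of the conjecture. You give no mechanism to overcome it, and none is supplied in the paper either: as you note, dense images do not in general transmit the Haagerup Property, and the appeal to the ``abundance of commuting copies of $B$'' is suggestive but not yet an argument. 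So your proposal locates the difficulty accurately but does not resolve it; the paper's only positive result toward the conjecture (the normal case) uses the relative-Haagerup technology of \cite{CT}, which has no obvious extension beyond normal $L$.
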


\begin{rem}\label{reduc}
Conjecture \ref{conj_hagrec} can be reduced to the case when $A=1$ and $B$ is discrete cyclic of prime order, and $X=H/L$ is a transitive $H$-set. 

Indeed, we can first choose $b\in B\smallsetminus A$ and consider the closure of the subgroup it generates, to assume that $B$ has a dense cyclic subgroup. This implies that either $B$ is infinite cyclic, or compact abelian. Then, in the latter case $A^X$ is a compact normal subgroup in the semirestricted wreath product, and hence we can mod it out and still preserve the Haagerup Property. Therefore, we can suppose $A=1$. Thus we can suppose that $B$ is discrete and cyclic (infinite or prime order if necessary), and $G=B\wr_{X}H$. Also, if the stabilizer condition fails for some $x$, we can pass to the closed subgroup $B\wr_{Y}H$, where $Y$ is the $H$-orbit of $x$. 
\end{rem}

\begin{prop}
Conjecture \ref{conj_hagrec} holds when $L$ is a normal subgroup.
\end{prop}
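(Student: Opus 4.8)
The plan is first to invoke Remark \ref{reduc}: replacing $B$ by the closure of a cyclic subgroup generated by an element of $B\smallsetminus A$, modding out $A^X$, and passing to an $H$‑orbit, we reduce to the case $A=1$, $B$ discrete and cyclic (infinite, or of prime order), $X=H/L$ a transitive $H$‑set, and $L$ open and normal in $H$, so that $G=B\wr_{H/L}H=B^{(H/L)}\rtimes H$. Since the Haagerup Property is inherited by, and detected on, compactly generated open subgroups, we may further assume $G$ (hence $H$, $B$) $\sigma$‑compact and $H/L$ countable. If $H/L$ is finite then $H\slsl L$ is profinite, hence trivially has the Haagerup Property, so we assume $H/L$ infinite.

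\textbf{Using normality of $L$.} Because $L$ is normal it acts trivially on the transitive set $H/L$. Feeding this into the definition of $H\slsl L$ gives a short exact sequence of topological groups $1\to\widehat L\to H\slsl L\to H/L\to 1$, where $\widehat L=L\slsl L$ is the (compact) profinite completion of $L$; indeed the image of $L$ in $H\slsl L$ is the closed compact subgroup $\widehat L$, and the quotient is the constant inverse system $H/L$. As the Haagerup Property is invariant under quotients by, and extensions with, compact normal subgroups, $H\slsl L$ has the Haagerup Property if and only if the discrete group $H/L$ does. Thus the whole content of the proposition is the implication: \emph{if $G=B\wr_{H/L}H$ has the Haagerup Property, then $H/L$ has it.}

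\textbf{The main step (and the main obstacle).} Here $H/L$ is the quotient of $G$ by the open normal subgroup $N=B^{(H/L)}\rtimes L=B^{(H/L)}\times L$ ($L$ again acting trivially on $B^{(H/L)}$), and $N$ is in general non‑compact, so one cannot merely push the Haagerup Property down to the quotient — the wreath structure must be used. The device is to fix $b_0\in B\smallsetminus\{1\}$ and the ``lamp at the base point'' $s=\delta_{eL}(b_0)\in B^{(H/L)}\le G$. A direct computation in $B^{(H/L)}\rtimes H$ gives $sls^{-1}=l$ for all $l\in L$, while the commutator $[s,h]=\delta_{eL}(b_0)-\delta_{hL}(b_0)\in B^{(H/L)}$ depends only on $hL\in H/L$ and tends to infinity in $B^{(H/L)}\le G$ as $hL\to\infty$ (even though $h$ itself need not leave compact subsets of $H$). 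Consequently, starting from positive‑definite $C_0$‑functions $\phi_n\to 1$ on $G$ (available since $G$ is $\sigma$‑compact and Haagerup) and from the conjugations by finite products of lamps, one produces on $H$ (and on the set $H/L$) positive‑definite functions and kernels that vanish at infinity on $H/L$ and tend to $1$. The hard point — and the step I expect to be the main obstacle — is to upgrade these into genuine \emph{$L$‑bi‑invariant} positive‑definite approximations of $1$ on $H$ (equivalently, positive‑definite functions on the group $H/L$), the difficulty being precisely that there is no group‑theoretic section $H/L\to G$ in general and that one cannot average over the non‑compact $L$; overcoming it requires exploiting that $L$ fixes every lamp coordinate and that $H/L$ permutes the lamp positions freely and transitively, for instance via a limiting/averaging argument over lamp positions.

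\textbf{Conclusion of the scheme.} Equivalently, one may phrase the extraction in the language of Lemma \ref{haag_compact}: from a proper continuous conditionally negative definite function on $G$ one wishes to produce an $H$‑invariant (hence, $L$ acting trivially, genuinely $H/L$‑invariant) proper walling on $H/L$, and the obstacle is the same — the invariance of the walling obtained this way. Once an $L$‑bi‑invariant, $C_0$‑modulo‑$L$ positive‑definite approximation of $1$ on $H$ (or a proper $H/L$‑invariant walling on $H/L$) is in hand, it descends to the discrete group $H/L$, which is therefore Haagerup; by the second paragraph this means $H\slsl L$ is Haagerup, as required.
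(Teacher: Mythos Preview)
Your reductions and the observation that $H\slsl L$ has the Haagerup Property if and only if the discrete group $H/L$ does are both correct, and your commutator computation with the lamp $s=\delta_{eL}(b_0)$ is exactly the right endgame. But your own diagnosis is accurate: the proposal has a genuine gap at the ``main obstacle''. You never produce $L$-bi-invariant positive-definite (or conditionally negative definite) functions on $H$, and the suggested ``limiting/averaging over lamp positions'' is not an argument --- nothing you wrote explains why such an average would remain positive definite or proper.

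The paper closes this gap with an external input from \cite{CT}, and this is where the abelianness of $B$ in the reduction is actually used. After the reductions, $V=B^{(H/L)}$ is a discrete \emph{abelian} group on which $L$ acts trivially. Haagerup on $G=V\rtimes H$ gives, trivially, the \emph{relative} Haagerup Property for the pair $(V\rtimes H,\,V)$: a continuous conditionally negative definite function on $G$ that is proper in restriction to $V$. The point of \cite[Corollary~5]{CT} (a characterisation of relative Haagerup/Kazhdan for semidirect products with abelian kernel) is precisely that one may then mod out the trivially-acting $L$: the pair $(V\rtimes (H/L),\,V)$ also has the relative Haagerup Property. This is the ``averaging over the non-compact $L$'' you were looking for, and it genuinely needs $V$ abelian. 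Once you have a conditionally negative definite $\psi$ on $V\rtimes(H/L)$ that is proper on $V$, your commutator argument finishes verbatim: $h_n\delta h_n^{-1}\to\infty$ in $V$ as $h_n\to\infty$ in $H/L$, so $\psi(h_n\delta h_n^{-1})\to\infty$, and subadditivity of $\sqrt{\psi}$ forces $\psi(h_n)\to\infty$; the same estimate handles general sequences $f_nh_n$. Thus $\psi$ is proper on $H/L$ and you are done.
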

\begin{proof}
By Remark \ref{reduc}, we can assume that $A=1$ and $B$ is discrete abelian nontrivial. By assumption, $G=B^{(H/L)}\rtimes H$ has the Haagerup Property. Thus $(B^{(H/L)}\rtimes H,B^{(H/L)})$ has the relative Haagerup Property (assuming as we can that $G$ is $\sigma$-compact, this means that it admits a continuous affine isometric action on a Hilbert space that is proper in restriction to $B^{(H/L)}$. By \cite[Corollary 5]{CT}, it follows that $(B^{(H/L)}\rtimes (H/L),B^{(H/L)})$ also have the relative Haagerup Property. Let $\psi$ be a continuous conditionally positive definite function on $G$. If $(h_n)$ is any sequence in $H/L$ leaving compact subsets and $b$ is a nontrivial element in $B$, and $\delta$ the function on $H/L$ mapping 1 to $b$ and other elements to 1, then $h_n\delta h_n^{-1}\in B^{(H/L)}$ tends to infinity. Since $\psi$ is proper on $B^{(H/L)}$, we obtain that $\psi(h_n\delta h_n^{-1})$ tends to infinity, and hence (since $\sqrt{\psi}$ is symmetric and subadditive) that $\psi(h_n)$ tends to infinity. Since this holds for every $(h_n)$, we deduce that $\psi$ is proper on $H$. Actually, the argument also works for any sequence $(f_nh_n)$ with $h_n$ leaving compact subsets of $H$; for a sequence $f_nh_n$ with $(h_n)$ bounded and $(f_n)$ leaving compact subsets of $B^{(H/L)}$, we directly use properness on $B^{(H/L)}$ to infer that $\psi(f_nh_n)$ tends to infinity.
\end{proof}

\begin{rem}
The converse of Theorem \ref{thaag} in the PW case can also naturally be asked; however I refrain to any conjecture since the analogues of the results of \cite{CT} are not available in this case.
\end{rem}

\section{Polycompact and bounded radicals}\label{poly}

Recall that in a locally compact group, 
\begin{itemize}
\item $\WW(G)$ denotes the {\bf polycompact radical}, namely the subgroup generated by all compact normal subgroups, which is also the union of all compact normal subgroups. 
\item $\FC(G)$ denotes the {\bf bounded radical}, namely the union of all relatively compact conjugacy classes ($\FC$ stands for bounded). It is also sometimes called ``topological FC-center".
\end{itemize}

We have $\WW(G)\subset\FC(G)\supset Z(G)$, where the latter is the center.
Beware that $\WW(G)$ and $\FC(G)$ can fail to be closed, see \cite{WY}, and also precisely in the case of semirestricted wreath products, see Examples \ref{wy_var} and \ref{wy_var2}.

\begin{prop}\label{wfc_wr}
Consider a semirestricted locally compact wreath product $G=B\wr^A_X H$ with $A\neq B$ and $X\neq\emptyset$. Also
\begin{itemize}
\item Write $X=X_\infty\sqcup X_{\mathrm{f}}$, separating the union of all infinite and all finite orbits.
\item Let $N$ be the kernel of the $H$-action on $X$. Define $N'\supset N$ to be the inverse image in $H$ of $\WW((H/N)_\delta)$, where $(H/N)_\delta$ denotes $H/N$ with the discrete topology (so that $\WW((H/N)_\delta)$ is the union of all finite normal subgroups of $H/N$). Define $N''\supset N$ to be the set of elements of $h$ acting on $X$ as a finitely supported permutation. 

Define $\Xi\subset H$ to be $N$ if $B$ is non-compact and $\Xi=N'\cap N''$ if $B$ is compact.

\item Define $\mathrm{Core}_B(A)$ to be the largest normal subgroup of $B$ included in $A$.
\end{itemize}
Then 
\[\WW(G)=(\mathrm{Core}_B(A)^{X_\infty}\times \WW(B)^{X_{\mathrm{fin}},\WW(B)\cap \mathrm{Core}_B(A)})\rtimes (\WW(H)\cap \Xi);\]
\[\FC(G)=(\mathrm{Core}_B(A)^{X_\infty}\times \FC(B)^{X_{\mathrm{fin}},\FC(B)\cap \mathrm{Core}_B(A)})\rtimes (\FC(H)\cap \Xi).\]

In particular, if $X$ has no finite $H$-orbit, then 
\[\WW(G)=\mathrm{Core}_B(A)^{X}\rtimes (\WW(H)\cap N);\quad\FC(G)=\mathrm{Core}_B(A)^{X}\rtimes (\FC(H)\cap N).\]
\end{prop}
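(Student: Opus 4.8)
The plan is to use the two standard characterizations: $g\in\FC(G)$ iff its $G$-conjugacy class is relatively compact, and $g\in\WW(G)$ iff the closure $\overline{\langle\langle g\rangle\rangle}$ of its normal closure is compact. Write $W=B^{X,A}$ for the base group, so that $G\cong W\times H$ as a topological space and a subset of $G$ is relatively compact iff both of its projections to $W$ and to $H$ are. Recall that $A^X$ is compact open in $W$ with $W/A^X=(B/A)^{(X)}$ discrete, so a subset of $W$ is relatively compact iff the supports $\mathrm{Supp}_A(f)$ are contained in a common finite set and the values lie in a common compact subset of $B$ (the latter being automatic when $B$ is compact). I would first compute the two radicals intersected with $W$, then their images under $\pi_2\colon G\to H$, and finally reassemble using the semidirect structure.

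\emph{The base group.} For $f\in W$ every $G$-conjugate $f'({}^{h'}f)f'^{-1}$ again lies in $W$, so $\langle\langle f\rangle\rangle_G\subseteq W$ and I only have to decide when the conjugacy class, resp.\ the normal closure, is relatively compact inside $W$. Conjugating by $H$ alone moves $\mathrm{Supp}_A(f)$ to $h'\,\mathrm{Supp}_A(f)$, so relative compactness of the supports forces $\mathrm{Supp}_A(f)\subseteq X_{\mathrm{fin}}$ and, there, bounds the support in the finite saturation $H\cdot\mathrm{Supp}_A(f)$. The value analysis is coordinatewise: at a point $x$ the conjugates produce all $B$-conjugates $bf(x)b^{-1}$, so relative compactness of the conjugacy class needs $f(x)\in\FC(B)$, while relative compactness of the normal closure needs $\overline{\langle\langle f(x)\rangle\rangle_B}$ compact, i.e.\ $f(x)\in\WW(B)$. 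Finally, if infinitely many points of $X_{\mathrm{fin}}$ (necessarily spread over infinitely many finite orbits) carried values in $A\smallsetminus\mathrm{Core}_B(A)$, a conjugation by a suitable finitely supported $f'$ would push arbitrarily many of them out of $A$ and spread the support to infinity; hence cofinitely many values on $X_{\mathrm{fin}}$ must lie in $\mathrm{Core}_B(A)$. The same spreading argument on an infinite orbit shows that on $X_\infty$ the values must lie in $\mathrm{Core}_B(A)$, so that all $B$-conjugates stay in $A$. Conversely these conditions suffice, using that $\mathrm{Core}_B(A)^{X_\infty}$ is a compact normal subgroup and that finite joins of the compact normal subgroups $\overline{\langle\langle f(x)\rangle\rangle_B}\subseteq\WW(B)$ stay compact. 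This yields the displayed base-group factors, with $\FC(B)$ for $\FC(G)$ and $\WW(B)$ for $\WW(G)$.

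\emph{The acting group and the subgroup $\Xi$.} Projecting gives $\pi_2(\FC(G))\subseteq\FC(H)$ and $\pi_2(\WW(G))\subseteq\WW(H)$, and the role of $\Xi$ is to record the extra constraint on $h$. If $B$ is non-compact and $hx_0\neq x_0$, conjugating a general element $(f,h)$ by an unbounded $b\,\delta_{x_0}$ produces the value $bf(x_0)$ at $x_0$, which is unbounded; hence every element of $\FC(G)$ has $h\in N$, and conversely for $h\in N$ the action on $X$ is trivial, all base parts of conjugates collapse, and $(1,h)$ lies in the radical exactly when $h\in\FC(H)$, resp.\ $\WW(H)$. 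If $B$ is compact the values stay bounded and the only obstruction is support-spreading (the fixed $f$ being a bounded perturbation): one needs $h$ to move a finite set $P$, i.e.\ $h\in N''$, and, since $h'hh'^{-1}$ moves $h'P$, one needs the saturation $HP$ to be finite. The key point is the equivalence, for $h\in N''$, of ``$HP$ finite'' with $h\in N'$: when $HP$ is finite the conjugates of $h$ all act within the finite set $HP$ and trivially outside, so the normal closure of $hN$ in $H/N$ embeds in $\mathrm{Sym}(HP)$ and is a finite normal subgroup, i.e.\ $hN\in\WW((H/N)_\delta)$; conversely a finite normal subgroup of $H/N$ has finitely many conjugates, each a finitely supported permutation, whence $HP$ is finite. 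Granting this, $(1,h)$ lies in the radical iff $h\in N'\cap N''$ together with $h\in\FC(H)$, resp.\ $\WW(H)$; that is, $\pi_2$ of each radical equals $(\FC(H)\text{ or }\WW(H))\cap\Xi$.

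\emph{Assembly and main obstacle.} The base-group factor is normal in $G$ and, by the first step, contained in the radical; the elements $(1,h)$ with $h\in\Xi$ and $h\in\FC(H)$ (resp.\ $\WW(H)$) also lie in it, so their products give the claimed semidirect product inside the radical. For the reverse inclusion, given $(f,h)$ in the radical the projection gives $h\in\FC(H)\cap\Xi$ (resp.\ $\WW(H)\cap\Xi$, using $\WW(G)\subseteq\FC(G)$ for the $\Xi$-part); hence $(1,h)$ is in the radical and $f=(f,h)(1,h)^{-1}$ lies in radical $\cap\,W$, which is the base-group factor computed above. This proves both formulas, and the ``in particular'' case follows since when $X$ has no finite orbit one has $X_{\mathrm{fin}}=\emptyset$ and $N'\cap N''=N$ (a nonempty finitely supported $P$ would meet an infinite orbit and have infinite saturation). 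I expect the main obstacle to be the compact-$B$ case: both making the support-spreading argument uniform over the whole conjugacy class and normal closure of a general element $(f,h)$, and identifying the geometric condition ``$HP$ finite'' with the group-theoretic condition $h\in N'$, require care, and it is precisely there that the definition of $\Xi$ is vindicated.
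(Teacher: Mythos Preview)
Your proposal is correct and follows essentially the same approach as the paper: both arguments hinge on conjugating by the Dirac elements $\delta_x(b)$ to spread supports and constrain values, and both identify $\Xi$ via the same finite-support/finite-normal-closure analysis in $H/N$. The only differences are organizational: the paper first mods out by the compact normal subgroup $\mathrm{Core}_B(A)^X$ to reduce to $\mathrm{Core}_B(A)=1$ and then proves the two inclusions separately through a list of bullet points, whereas you work directly with $\mathrm{Core}_B(A)$ present and structure the argument as ``intersect with the base, project to $H$, reassemble''; you also make the equivalence between ``$HP$ finite'' and $h\in N'$ (given $h\in N''$) explicit, which the paper leaves implicit.
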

\begin{proof}
Modding out by the compact normal subgroup $\mathrm{Core}_B(A)$, we can reduce to assume $\mathrm{Core}_B(A)=1$ and prove, in this case, that 
\[\WW(G)=\WW(B)^{(X_{\mathrm{fin}})}\rtimes (\WW(H)\cap \Xi);\qquad \FC(G)=\FC(B)^{(X_{\mathrm{fin}})}\rtimes (\FC(H)\cap \Xi).\]

Let $M$ be the group on the right side. We first check the inclusions $\supset$ in both cases. This follows from the following inclusions:

\begin{itemize}

\item $\WW(B)^{(X_{\mathrm{fin}})}\subset \WW(G)$. For every compact normal subgroup $K$ of $B$ and finite $H$-invariant subset $F\subset X$, $\WW(B)^F$ is a compact normal subgroup and hence is included in $\WW(G)$; it follows that the union of all these subgroups ($F$ ranging over finite $H$-invariant subsets of $X$ and $K$ among all compact normal subgroups of $B$), which is precisely $\WW(B)^{(X_{\mathrm{fin}})}$, is included in $\WW(G)$.

\item $\FC(B)^{(X_{\mathrm{fin}})}\subset \FC(G)$. Let $f$ belong to this subgroup. Then the support of $f$ being finite and included in $X_{\mathrm{fin}}$, the union of $H$-orbits is a finite $H$-invariant subset $F$. If $I$ is the finite image of $f$ and $J$ is the union of closures of conjugacy classes of $I$, then $J$ is compact and the conjugacy class of $f$ is included in $J^F$, which is compact.

\item $\WW(H)\cap N\subset \WW(G)$: indeed this is the union of $K\cap N$ where $K$ ranges over compact normal subgroups of $H$. Since $K\cap N$ is also normal in $G$, it is therefore included in $\WW(G)$. Taking the union yields the inclusion.

\item $\FC(H)\cap N\subset \FC(G)$: if $f\in N$, then it centralizes $B^{X,A}$ and hence its $H$-conjugacy class equals its $G$-conjugacy class. Hence if $f\in\FC(H)\cap N$ then $f\in\FC(G)$.

\item if $B$ is compact, $\WW(H)\cap N'\cap N''\subset \WW(G)$. Let $h$ be an element in $N'\cap N''\cap \WW(H)$. Let $M$ be the closure of the normal subgroup of $H$ generated by $h$.
 Since $h\in N''$, $M$ acts on $X$ by finitely supported permutations. Since $h\in N'$, the image of $M$ in the group of permutations of $X$ is finite. Combining, the union $Y$ of all supports of elements of $M$ is a finite subset of $X$. Since $M$ is normal in $H$, $Y$ is $H$-invariant. Since $h\in \WW(H)$, $M$ is compact. Hence $B^Y\rtimes M$ is a compact normal subgroup of $G$. Hence $h\in \WW(G)$.

\item if $B$ is compact, $\FC(H)\cap N'\cap N''\subset \FC(G)$. We argue as in the previous case. The difference is that $M$ is not necessarily compact; however, $M/(M\cap N)$ is still finite. In $B^Y\rtimes M$, modulo the compact normal subgroup $B^Y$, the $G$-conjugacy class of $f$ has compact closure; hence this holds in $B^Y\rtimes M$ and hence in $G$. 
\end{itemize}

This shows the inclusions $\supset$.
We now have to prove the reverse inclusions. Denote by $\pi$ the projection $G\to H$.

\begin{itemize}
\item $\WW(G)\subset\pi^{-1}(\WW(H))$ and $\FC(G)\subset\pi^{-1}(\FC(H))$ is clear.

\item $\WW(G)\subset\FC(G)\subset\pi^{-1}(N'\cap N'')$. Indeed, let $C$ be a conjugacy class included in $\FC(G)$. For $g\in B^{X,A}$ and $fh\in C$, we have $g(fh)g^{-1}=gf(hgh^{-1})^{-1}h$. Define $\delta_x(b)$ be the function mapping $x$ to $b\in B$ and other elements of $X$ to 1, and apply this to $g=\delta_x(b)$. Then $\delta_x(b)fh\delta_x(b)^{-1}=\delta_x(bf(x))\delta_{hx}(b^{-1})$. Fix $b\notin A$. Then, when $fh$ ranges over $C$ and $x$ ranges over elements such that $hx\neq x$, then the $x$-projection of $\delta_x(b)fh\delta_x(b)^{-1}$ is $b^{-1}$, and thus $x$ has to range over finitely many elements only. In other words, the set $Z$ of $x$ such that $h(x)\neq x$ for some $fh\in C$ is finite. This means that for every $fh\in C$, $h$ acts as the identity on the complement of $Z$. This shows that $C\subset\pi^{-1}(N'\cap N'')$.

\item if $B$ is non-compact, $\WW(G)\subset\FC(G)\subset\pi^{-1}(N)$. We prove the complementary inclusion: suppose that $fh\in G$ with $h\notin N$. Fix $x$ such that $hx\neq x$. Then
\[p:=[fh,\delta_x(b)]=fh\delta_x(b)h^{-1}f^{-1}\delta_x(b^{-1})=f\delta_{hx}(b)f^{-1}\delta_x(b^{-1})\]\[=\delta_{hx}(f(hx)bf(hx)^{-1})\delta_x(b^{-1}).\]
So $p(x)=b^{-1}$. Since $B$ is non-compact, this shows that these commutators, when $b$ ranges over $B$, do not have a compact closure and thus $fh\notin \FC(G)$.
\end{itemize}

Now we know that the projection to $H$ is exactly $\WW(H)\cap\Xi$, we only have to show the reverse inclusions for $f\in \WW(G)\cap B^{X,A}$, namely $\WW(G)\cap B^{X,A}\subset\WW(B)^{(X_{\mathrm{fin}})}$ and $\FC(G)\cap B^{X,A}\subset\FC(B)^{(X_{\mathrm{fin}})}$. This is equivalent to the three inclusions below.

\begin{itemize}
\item $\WW(G)\cap B^{X,A}\subset\WW(B)^{X,A}$ and $\FC(G)\cap B^{X,A}\subset\FC(B)^{X,A}$: both inclusions are immediate.
\item $\WW(G)\cap B^{X,A}\subset \FC(G)\cap B^{X,A}\subset B^{(X_{\mathrm{fin}})}$. Let $f$ belong to $\FC(G)\cap B^{X,A}$. Suppose by contradiction that $f$ is not supported by $X_{\mathrm{fin}}$. Then there exists $x\in X_\infty$ such that $f(x)\neq 1$. Conjugating $f$ by some element of the form $\delta_x(b)$, we can suppose that $f(x)\notin A$. Hence the $H$-conjugates of $f$ do not have compact closure, a contradiction. This shows that $f$ is supported by $X_{\mathrm{f}}$. 
\end{itemize}

To see that the last statement follows, we need to check that $N'\cap N''=N$ in case there is no finite orbit. Indeed, $(N'\cap N)$ is the union of all subgroups of the form $M$ where $M$ includes $N$ with finite index, $M/N$ is normal in $H/N$ and acts on $X$ with finite support. Given such an $M$, the union of all its supports is finite and $H$-invariant, hence empty, so $M=N$ and finally $N\cap N'=N$.
\end{proof}

\begin{cor}\label{wg0}
$\WW(G)=1$ if and only if $\WW(H)\cap \Xi=1$, ($X_{\mathrm{fin}}\neq\emptyset$ $\Rightarrow$ $\WW(B)=1$), and $\mathrm{Core}_A(B)=1$.

In case $X_{\mathrm{fin}}=\emptyset(\neq X)$, this can be restated as: $\WW(G)=1$ if and only if $\WW(H)\cap N=1$ and $\mathrm{Core}_A(B)=1$.
\end{cor}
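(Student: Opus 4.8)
The plan is to extract the corollary directly from the description of $\WW(G)$ in Proposition \ref{wfc_wr}. Since a semidirect product $P\rtimes Q$ is trivial exactly when $P=Q=1$, the equality $\WW(G)=1$ holds if and only if $\WW(H)\cap\Xi=1$ (which is the first listed condition) together with
\[\mathrm{Core}_B(A)^{X_\infty}\times \WW(B)^{X_{\mathrm{fin}},\,\WW(B)\cap \mathrm{Core}_B(A)}=1.\]
So the whole task reduces to showing that this last equality is equivalent to the conjunction of ``$\mathrm{Core}_B(A)=1$'' and the implication ``$X_{\mathrm{fin}}\neq\emptyset\Rightarrow\WW(B)=1$''.

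The elementary observation to use repeatedly is that a semirestricted power $C^{Y,D}$ (in particular an ordinary power $C^Y$ or a restricted power $C^{(Y)}$) is trivial when $Y=\emptyset$, while for $Y\neq\emptyset$ it contains a copy of $C$ via $\delta_{y_0}$ for any $y_0\in Y$, hence is trivial precisely when $C=1$. I would then split into cases. If $X_\infty\neq\emptyset$, triviality of the factor $\mathrm{Core}_B(A)^{X_\infty}$ forces $\mathrm{Core}_B(A)=1$; the second factor then collapses to the restricted power $\WW(B)^{(X_{\mathrm{fin}})}$, which is trivial iff $X_{\mathrm{fin}}=\emptyset$ or $\WW(B)=1$, i.e.\ iff $X_{\mathrm{fin}}\neq\emptyset\Rightarrow\WW(B)=1$. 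If instead $X_\infty=\emptyset$, then $X=X_{\mathrm{fin}}\neq\emptyset$, the product is just the single semirestricted power $\WW(B)^{X,\,\WW(B)\cap\mathrm{Core}_B(A)}$, trivial iff $\WW(B)=1$; and here one notes that $\mathrm{Core}_B(A)=\bigcap_{b\in B}bAb^{-1}$ is a closed subgroup of $B$ contained in the compact set $A$, hence a compact normal subgroup, so $\mathrm{Core}_B(A)\subseteq\WW(B)$ and $\WW(B)=1$ already yields $\mathrm{Core}_B(A)=1$. In both cases one lands exactly on: $\WW(G)=1$ iff $\WW(H)\cap\Xi=1$, $\mathrm{Core}_B(A)=1$, and $X_{\mathrm{fin}}\neq\emptyset\Rightarrow\WW(B)=1$, which is the asserted equivalence.

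For the reformulation, suppose $X_{\mathrm{fin}}=\emptyset\neq X$. Then the implication involving $\WW(B)$ is vacuous, and it remains only to identify $\Xi$ with $N$: if $B$ is non-compact this is the definition of $\Xi$, and if $B$ is compact it is the equality $N'\cap N''=N$ established at the end of the proof of Proposition \ref{wfc_wr} (absence of finite $H$-orbits forces every normal subgroup of $H$ acting on $X$ with finite support to lie in $N$). Hence $\WW(G)=1$ iff $\WW(H)\cap N=1$ and $\mathrm{Core}_B(A)=1$. There is essentially no obstacle here: the only points requiring care are the bookkeeping of the empty-index-set cases and the remark that $\mathrm{Core}_B(A)$ is absorbed by $\WW(B)$; all the substantive work was already done in Proposition \ref{wfc_wr}.
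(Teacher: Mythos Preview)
Your proof is correct and is precisely the direct deduction from Proposition \ref{wfc_wr} that the paper intends; the paper states the corollary without proof, and your case analysis (including the observation that $\mathrm{Core}_B(A)$, being compact and normal in $B$, lies in $\WW(B)$, and the appeal to $N'\cap N''=N$ from the end of the proof of Proposition \ref{wfc_wr}) fills in exactly the details one would expect.
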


\begin{cor}
Assume that $X_{\mathrm{fin}}$ is infinite. Then $\WW(G)$ is closed if and only if $\WW(B/\mathrm{Core}_B(A))=1$ and $\WW(H)\cap N$ is closed.
\end{cor}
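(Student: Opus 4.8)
The plan is to feed the explicit description of $\WW(G)$ from Proposition~\ref{wfc_wr} into a closedness criterion for subgroups of semidirect products, after a preliminary reduction. First I would discard the harmless compact part. Since $\mathrm{Core}_B(A)$ is a compact normal subgroup of $B$, the subgroup $C=\mathrm{Core}_B(A)^X$ is compact, normal in $G$, and by the formula of Proposition~\ref{wfc_wr} it is contained in $\WW(G)$. Because $C$ is compact, the preimage in $G$ of any compact normal subgroup of $G/C$ is again compact normal, whence $\WW(G)/C=\WW(G/C)$, and similarly $\WW(B)/\mathrm{Core}_B(A)=\WW(B/\mathrm{Core}_B(A))$, so the condition ``$\WW(B/\mathrm{Core}_B(A))=1$'' is unaffected by the reduction. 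As $C$ is closed and contained in $\WW(G)$, the latter is closed in $G$ if and only if $\WW(G/C)$ is closed in $G/C$; and $G/C\cong(B/\mathrm{Core}_B(A))\wr^{A/\mathrm{Core}_B(A)}_XH$ has base with trivial core and the same $N$ and $\Xi$ as $G$. So one may assume $\mathrm{Core}_B(A)=1$, in which case Proposition~\ref{wfc_wr} gives $\WW(G)=\WW(B)^{(X_{\mathrm{fin}})}\rtimes(\WW(H)\cap\Xi)$, and the assertion to prove becomes: this is closed if and only if $\WW(B)=1$ and $\WW(H)\cap N$ is closed.

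Next I would invoke the elementary fact that in a topological semidirect product $P\rtimes H$, a subgroup of the form $S_P\rtimes S_H$ with $S_P$ normal in it is closed if and only if $S_P$ is closed in $P$ and $S_H$ is closed in $H$: indeed $S_P\rtimes H$ is closed once $S_P$ is, and $S_P\rtimes S_H$ is then the preimage of $S_H$ under the quotient map $S_P\rtimes H\to H$, while conversely $S_P=S\cap P$. Applying this with $P=B^{X,A}$, $S_P=\WW(B)^{(X_{\mathrm{fin}})}$ and $S_H=\WW(H)\cap\Xi$ splits the proof into: (i) $\WW(B)^{(X_{\mathrm{fin}})}$ is closed in $B^{X,A}$ if and only if $\WW(B)=1$; and (ii) under $\WW(B)=1$ one has $\Xi=N$, so the surviving condition is that $\WW(H)\cap N$ be closed. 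Part (ii) is routine: were $B$ compact, then $\mathrm{Core}_B(A)$ being trivial would force $B$ finite with $\WW(B)=B$, hence $B=A=1$, contrary to hypothesis; so $\WW(B)=1$ already forces $B$ noncompact and $\Xi=N$.

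The real content, and the step I expect to be the main obstacle, is part (i). The ``if'' direction is immediate. For ``only if'', assume $\WW(B)\neq1$ and fix a nontrivial element $b$ in a compact normal subgroup of $B$; here one needs $b$ to lie in the compact open subgroup $A$, and verifying that the hypothesis actually supplies such a $b$ is the delicate point. Granting this, pick distinct $x_1,x_2,\dots\in X_{\mathrm{fin}}$ (possible since $X_{\mathrm{fin}}$ is infinite) and consider the net, indexed by the finite subsets $F\subseteq\{x_1,x_2,\dots\}$, of configurations equal to $b$ on $F$ and trivial elsewhere. Each lies in $\WW(B)^{(X_{\mathrm{fin}})}$, and because $b\in A$ this net converges in $B^{X,A}$ to the configuration equal to $b$ at \emph{every} $x_i$, which still lies in $B^{X,A}$ but has infinite support, hence is not in $\WW(B)^{(X_{\mathrm{fin}})}$; so $\WW(B)^{(X_{\mathrm{fin}})}$ is not closed. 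Making this net argument precise in the locally compact---but not metrizable---group $B^{X,A}$, and in particular pinning down exactly which subgroup of $B$ the closedness of $\WW(B)^{(X_{\mathrm{fin}})}$ really constrains (so that it matches ``$\WW(B/\mathrm{Core}_B(A))=1$'' and not some a priori weaker condition phrased in terms of $\WW(B)\cap A$), is the crux of the argument.
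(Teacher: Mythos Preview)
Your reduction by modding out the compact normal subgroup $\mathrm{Core}_B(A)^X$ is precisely the paper's approach; the paper's proof then consists of the single phrase ``the result then immediately follows'', so you have in effect supplied all of the content. Your splitting lemma for closedness of $S_P\rtimes S_H$ inside $P\rtimes H$ is correct (for the converse direction, note $S_H=S\cap H$ with $H$ closed), and your argument for (ii) is fine: the reason $B$ compact with $\mathrm{Core}_B(A)=1$ forces $B$ finite is that $A$, being open in the compact group $B$, has finite index, so $\mathrm{Core}_B(A)$ is a finite intersection of open subgroups, hence itself open.

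Where you hesitate, at the ``delicate point'' in (i), your instinct is exactly right and actually exposes a genuine issue rather than a gap in your reasoning. After the reduction, a subgroup is closed if and only if its intersection with the compact open subgroup $A^X$ is closed there; hence closedness of $\WW(B)^{(X_{\mathrm{fin}})}$ in $B^{X,A}$ amounts to closedness of $(\WW(B)\cap A)^{(X_{\mathrm{fin}})}$ in $A^X$, and since $X_{\mathrm{fin}}$ is infinite this holds if and only if $\WW(B)\cap A=1$. But $\WW(B)\cap A=1$ does \emph{not} force $\WW(B)=1$: take $A=\{1\}$ (so $B$ is discrete) and $B$ any nontrivial finite group, with $H$ discrete acting on some $X$ with $X_{\mathrm{fin}}$ infinite. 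Then $\mathrm{Core}_B(A)=1$, the whole group $G$ is discrete so $\WW(G)$ is automatically closed, yet $\WW(B/\mathrm{Core}_B(A))=\WW(B)=B\neq 1$. So the ``only if'' direction of the corollary fails as stated; the natural condition your analysis produces is $\WW(B/\mathrm{Core}_B(A))\cap(A/\mathrm{Core}_B(A))=1$, together with closedness of $\WW(H)\cap\Xi$.
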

\begin{proof}
Since both properties are unchanged when modding out by the compact normal subgroup $\mathrm{Core}_B(A)$, we can suppose that $\mathrm{Core}_B(A)=1$, and the result then immediately follows.
\end{proof}

\begin{rem}Note that ($\WW(G)$ is closed) $\Leftrightarrow$ ($\FC(G)$ is closed) holds for arbitrary locally compact groups. Indeed $\Leftarrow$ is stated in \cite[Prop.\ 2.4(ii)]{Cind}, but the converse $\Rightarrow$ follows from \cite[Prop.\ 2.4(iv)]{Cind}.
\end{rem}

\begin{rem}The assumption only excludes two trivial cases, that is $A=B$ or $X=\emptyset$. Actually, $\WW(G)=A^X\rtimes \WW(H)$ if $A=B$ and $\WW(G)=\WW(H)$ if $X=\emptyset$, and similarly for $\FC(G)$.
\end{rem}

\begin{ex}\label{wy_var}
Let $(F_i)$ be a family of finite groups, each $F_i$ acting faithfully on some finite set $X_i$. Then the compact group $H=\prod F_i$ acts on $X=\bigsqcup X_i$. Then in this case, $N'=N''=\bigoplus F_i$. So if $B$ is any nontrivial finite group, we have $\WW(B\wr_X H)=B^{(X)}\rtimes\bigoplus F_i=\bigoplus B\wr F_i$. In this case $\WW(G)$ is not closed. This example is very similar to the classical original example of Wu-Yu \cite{WY}. 
\end{ex}

\begin{ex}\label{wy_var2}
Let $B$ be a finite group and $A$ a proper subgroup with trivial core (e.g., $B$ is non-abelian of order 6 and $A$ has order 2). Let $X$ be a faithful $\Z$-set with only finite orbits. Then for $G=B\wr^A_X\Z$, both $\WW(G)$ and $\FC(G)$ are equal to $B^{(X)}$ (which is not closed).
\end{ex}

\section{Locally compact groups of intermediate growth}\label{intgro}

\subsection{The construction}

It is natural to wonder whether there exist CGLC (compactly generated locally compact) groups of intermediate growth that are not too close to discrete groups.

This naturally led to the following question: {\it is every CGLC group of subexponential growth compact-by-Lie}? The point is that the answer is positive for groups of polynomial growth \cite{Los}.

A construction of Bartholdi-Erschler \cite{BE} along with the semirestricted wreath product construction leads to a negative answer:

\begin{thm}\label{bethm}
There exists a totally disconnected CGLC group of intermediate growth that is not compact-by-discrete.
\end{thm}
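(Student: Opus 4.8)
The plan is to take one of the Bartholdi--Erschler groups $\Delta = B \wr_X \Gamma$ of intermediate growth, where $B$ is a suitable finite group, $\Gamma$ is a finitely generated group (acting on the discrete set $X$ with finitely many orbits), and to realize it as a cocompact lattice in a semirestricted wreath product $G = \mathfrak{S}_n \wr^{\mathfrak{S}_{n-1}}_X \Gamma$, exactly as in Example \ref{finicay}. More precisely, picking a faithful transitive $B$-action of degree $n = |B|$ identifies $B$ with $\mathfrak{S}_n/\mathfrak{S}_{n-1}$, and Proposition \ref{copcip}(3) shows that the induced map $\Delta = B \wr_X \Gamma \to \mathfrak{S}_n \wr^{\mathfrak{S}_{n-1}}_X \Gamma = G$ is copci, i.e. $\Delta$ is a cocompact lattice in $G$. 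Here $\mathfrak{S}_{n-1} = A$ is compact open in the finite (hence discrete) group $\mathfrak{S}_n = B$, so $G$ is a genuine CGLC group, and it is totally disconnected since $B$ is discrete and we may take $\Gamma$ discrete (the unit component is trivial by the Remark following Proposition \ref{copcip}).

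The next step is growth. Growth type is a quasi-isometry invariant, and a cocompact lattice is quasi-isometric to the ambient locally compact group (with respect to word metrics associated to compact generating sets); since $\Delta$ has intermediate (subexponential, superpolynomial) growth by Bartholdi--Erschler, so does $G$. So I would just invoke that $\Delta \hookrightarrow G$ is copci, hence a quasi-isometry, hence $G$ has the same growth type as $\Delta$, namely intermediate.

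The heart of the matter is showing $G$ is not compact-by-discrete, equivalently (for a CGLC group) that $G$ has no nontrivial compact open normal subgroup, equivalently that the polycompact radical $\WW(G)$ is trivial. This is where Proposition \ref{wfc_wr} (via Corollary \ref{wg0}) does the work: with $H = \Gamma$ discrete, $B = \mathfrak{S}_n$, $A = \mathfrak{S}_{n-1}$, I need (i) $\mathrm{Core}_{\mathfrak{S}_n}(\mathfrak{S}_{n-1}) = 1$, which holds because $\mathfrak{S}_{n-1}$ contains no nontrivial normal subgroup of $\mathfrak{S}_n$ for $n \geq 3$ (for $n \geq 5$ this is clear from simplicity of $A_n$; for $n = 3,4$ one checks $A_n \not\subset \mathfrak{S}_{n-1}$ directly), so I should choose $B$ with $|B| = n \geq 3$; (ii) if $X_{\mathrm{fin}} \neq \emptyset$ then $\WW(B) = 1$ — since $B$ is finite this fails, so I must arrange that the Bartholdi--Erschler $\Gamma$-set $X$ has \emph{no} finite orbit (this holds for the standard construction, where $X = \Gamma$ with $\Gamma$ infinite and the action is by translation, or more generally one restricts to the case $X_{\mathrm{fin}} = \emptyset$); (iii) $\WW(\Gamma) \cap \Xi = 1$ where, since $\Gamma$ is discrete and $X$ has no finite orbit, $\Xi = N$ is the kernel of the $\Gamma$-action on $X$ — for $X = \Gamma$ acting by left translation $N = 1$, so this is immediate; in general one wants the Bartholdi--Erschler action to be faithful, or at least to have $\WW(\Gamma) \cap N = 1$. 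Under these choices Corollary \ref{wg0} gives $\WW(G) = 1$, so $G$ has no nontrivial compact normal subgroup, in particular no compact open normal subgroup, so $G$ is not compact-by-discrete.

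The main obstacle I anticipate is purely bookkeeping about the Bartholdi--Erschler construction: one must verify that their intermediate-growth example can be taken in the form $B \wr_X \Gamma$ with $B$ finite (it is — $B$ is a finite $2$-group or similar in their setup), with the $\Gamma$-set $X$ having no finite orbit and (if $\Gamma$ has nontrivial finite normal subgroups) faithful, so that hypotheses (ii) and (iii) above are met; and one must double-check that $n = |B| \geq 3$, or otherwise first replace $B$ by $B \times B$ or by a permutational representation of degree $\geq 3$ (which does not affect the intermediate growth of $B \wr_X \Gamma$ since $B$ stays finite) to guarantee $\mathrm{Core}_{\mathfrak{S}_n}(\mathfrak{S}_{n-1}) = 1$. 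None of this is deep, but it is the only place where one must look outside the machinery already set up in the paper. Everything else — that $G$ is CGLC, totally disconnected, of intermediate growth, and lattice-commensurable with $\Delta$ — follows formally from Proposition \ref{copcip}, the growth-quasi-isometry invariance, and Corollary \ref{wg0}.
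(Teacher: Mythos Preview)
Your proposal is correct and matches the paper's proof essentially line for line: embed $\Delta$ into $G = \mathfrak{S}_n \wr^{\mathfrak{S}_{n-1}}_X \Gamma$ via Proposition~\ref{copcip}, transfer intermediate growth along the cocompact-lattice quasi-isometry, and invoke Corollary~\ref{wg0} for $\WW(G)=1$. One clarification on the bookkeeping you flag: in Bartholdi--Erschler the set $X$ is $\Gamma/\Lambda$ with $\Gamma$ the first Grigorchuk group and $\Lambda$ a subgroup of infinite index (not $X=\Gamma$, which would force exponential growth), so $X$ is a single infinite orbit and your conditions (ii) and (iii) are immediate --- the paper notes one may use either $\WW(\Gamma)=1$ or faithfulness of the $\Gamma$-action on $\Gamma/\Lambda$.
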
 
\begin{proof}
Bartholdi and Erschler \cite{BE} have shown that the first Grigorchuk group $\Gamma$, which has intermediate growth, has a subgroup $\Lambda$ of infinite index such that for every finite group $F$, the wreath product $\Delta=F\wr_{\Gamma/\Lambda}\Gamma$ has intermediate growth as well. 

Then assume that $n=|F|\ge 3$. Embed, by Proposition \ref{copcip} the latter as a cocompact lattice into the semirestricted wreath product $G=\mathfrak{S}_n\wr^{\mathfrak{S}_{n-1}}_{\Gamma/\Lambda}\Gamma$. Then the latter has $\WW(G)=1$ by the second case of Corollary \ref{wg0} (using either that $\WW(\Gamma)=1$ or that $\Gamma$ acts faithfully on $\Gamma/\Lambda$). Since $\Delta$ has intermediate growth, so does $G$.
\end{proof}
 
We leave the following three questions open.
 
\begin{que}\label{q_intg}
Does there exist a totally disconnected CGLC group of subexponential growth $G$ that satisfies one of the following
\begin{enumerate}
\item\label{nodi} $G$ is non-compact and has no infinite discrete quotient (as a topological group)?
\item\label{nocom} $G$ is not commable to any discrete group?
\item\label{rewe} (Caprace \cite[Question 3.9]{Capr1}) $G$ is topologically simple and non-discrete? (Wesolek \cite[Problem 20.9.6]{Capr}) $G$ is not Wesolek-elementary? (Wesolek-elementary \cite{We} means that it lies in the smallest isomorphism-closed class of locally compact groups containing the trivial groups and stable under taking directed unions and extensions with discrete or profinite quotients.)
\end{enumerate}
\end{que}

Recall that commable is the ``equivalence relation" between locally compact generated by the relations: $G_2$ is isomorphic to a the quotient of a normal compact subgroup of $G_1$, and: $G_2$ is isomorphic to a closed cocompact subgroup of $G_1$.

\begin{rem}
1) If $G$ has polynomial growth, it is compact-by-discrete \cite{Los} and hence cannot fulfill any of the requirements (this also follows as a consequence of Trofimov's results on graphs \cite{Tro3}).

2) If $G$ is a non-compact totally disconnected CGLC group, by \cite[Theorem A]{CM}, either it has an infinite discrete quotient, or it has a non-compact non-discrete topologically simple, compactly generated subquotient $S$. If $G$ has intermediate growth, then so does $S$; in particular a positive answer to (\ref{nodi}) is equivalent to the existence of a non-discrete CGLC, topologically simple group of intermediate growth; thus a positive answer to (\ref{nodi}) would also answer (\ref{rewe}).
\end{rem}

See \S\ref{otherw} for an alternative construction for Theorem \ref{bethm}, yielding possible candidates for Question \ref{q_intg}(\ref{nocom}).

\begin{rem}
It is well-known that a discrete-by-compact CGLC group $G$ is automatically compact-by-discrete. Indeed, let $D$ be a cocompact normal discrete subgroup. Being finitely generated, its centralizer $C$ is open. Since clearly $G^\circ$ is compact, $C$ admits an open compact subgroup $K$, and hence $KD$ is a cocompact open subgroup, thus has finite index. So the intersection of all conjugates of $K$ is also open, and thus $G$ is compact-by-discrete.

It is also well-known that this implication does not hold for arbitrary $\sigma$-compact locally compact groups: the group of Example \ref{wy_var} is typical counterexample.
\end{rem}

\subsection{Trofimov's conjecture}\label{trocon}

We say that a connected graph $X$ {\bf essentially includes a tree} if there exists an injective, Lipschitz map from the regular trivalent tree into $X$ (Trofimov calls this ``hyperbolic" but this differs from usual terminology).

In the following, a graph is identified with its vertex set (so the only graph structure that matters is the given adjacency relation between vertices); in particular a graph homomorphism is understood to be a map between vertex sets mapping adjacent vertices to adjacent or equal vertices; it also means a 1-Lipschitz map. We call it a graph-quotient homomorphism if every edge from $Y$ is image of an edge in $X$.

Let $G$ be a group acting on a connected graph $X$. We say that the action is {\bf block-discrete} if there exists a graph $Y$ and a continuous action of $G$ on $X$, a surjective $G$-equivariant graph-quotient homomorphism $X\to Y$ with finite fibers, such that, denoting by $G^Y$ the image of $G$ in $\Aut(Y)$, the vertex stabilizers of the $G^Y$-action on $Y$ are finite.

Trofimov's conjecture can now be stated: 

\begin{conj}[Trofimov \cite{Tro1,Tro2}]Let $G$ be a group acting vertex-transitively on a connected graph $X$ of finite valency. Then either the action is block-discrete, or $X$ essentially includes a tree.
\end{conj}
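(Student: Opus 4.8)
The statement above is a \emph{conjecture}, and the whole point of \S\ref{trocon} is that it is \emph{false}; so the plan is not to prove it but to exhibit a counterexample, using the group $G$ furnished by Theorem \ref{bethm}: a totally disconnected, compactly generated, locally compact group of intermediate growth with no compact open normal subgroup (in fact $\WW(G)=1$, by Corollary \ref{wg0}). Being compactly generated and totally disconnected, $G$ admits a Cayley--Abels graph $X$: fix a compact open subgroup $K\le G$ and a compact symmetric generating set $S$ with $KSK=S$, and let $X$ have vertex set $G/K$, with $gK$ adjacent to $gsK$ for $s\in S$. Then $X$ is connected and of finite valency (since $K$ is open and $S$ compact), and $G$ acts on $X$ continuously and vertex-transitively. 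I claim this action contradicts the conjecture.

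First I would check that $X$ does not essentially include a tree. Since $X$ is quasi-isometric to $G$, it has subexponential growth, whereas the regular trivalent tree $T$ has exponential growth. If $f\colon T\to X$ were injective and Lipschitz, say $L$-Lipschitz, then for a base vertex $o$ of $T$ we would have $f(B_T(o,n))\subseteq B_X(f(o),Ln)$ for all $n$, so injectivity of $f$ gives $\#B_X(f(o),Ln)\ge\#B_T(o,n)$, which grows exponentially in $n$; this contradicts the subexponential growth of $X$. Hence no such $f$ exists.

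Second, and this is the heart of the matter, I would show that the action of $G$ on $X$ is not block-discrete. Suppose it were: then there are a graph $Y$ with a continuous $G$-action and a surjective $G$-equivariant graph-quotient homomorphism $\varphi\colon X\to Y$ with finite fibres, such that the image $G^Y$ of $G$ in $\Aut(Y)$ has finite vertex stabilizers. Put $N=\ker(G\to G^Y)$, a closed normal subgroup of $G$, and let $v=K\in G/K$ be the base vertex, so that $\mathrm{Stab}_G(v)=K$ is compact and open. Since every element of $N$ acts trivially on $Y$, the $N$-orbit of $v$ lies inside the finite fibre $\varphi^{-1}(\varphi(v))$; hence $N\cap K=\mathrm{Stab}_N(v)$ has finite index in $N$. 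As $N\cap K$ is also compact and open in $N$, it follows that $N$ is compact. Now $G/N$ is locally compact, acts continuously on the discrete set $Y$ (the action of $G$ factors through $G/N$), and has finite vertex stabilizers (they coincide with those of $G^Y$, since $N$ acts trivially on $Y$); a locally compact group with a finite open subgroup is discrete, so $G/N$ is discrete and $N$ is open in $G$. Thus $N$ is a compact open normal subgroup of $G$, contradicting Theorem \ref{bethm}. Therefore the action of $G$ on $X$ is not block-discrete and, together with the previous paragraph, it refutes the conjecture.

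The only delicate point is the compactness of $N$ in the last paragraph: the finiteness of the fibres of $\varphi$ is exactly what bounds the index $[N:N\cap K]$, and this is what upgrades the compact open subgroup $N\cap K$ to compactness of $N$ itself. Everything else is either standard (existence and quasi-isometry type of the Cayley--Abels graph, the fact that a finite open subgroup forces discreteness) or already contained in Theorem \ref{bethm} and the computation of $\WW$ around Corollary \ref{wg0}.
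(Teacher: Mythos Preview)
Your proposal is correct and follows essentially the same route as the paper: take the group $G$ from Theorem \ref{bethm}, use its Cayley--Abels graph $X$, rule out ``essentially includes a tree'' via subexponential growth, and rule out block-discreteness by showing that it would force a compact open normal subgroup. Your argument for the last implication is a mild rearrangement of the paper's Fact: the paper first shows the $G$-action on $Y$ is proper (hence the $Y$-stabilizer $H$ is compact open) and then deduces that the kernel $W\subset H$ is compact and open, whereas you first get $N$ compact from $[N:N\cap K]<\infty$ and then get $N$ open by observing $G/N$ has a finite open subgroup; both are valid and use the same ingredients.
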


In particular, it predicts that in the case $X$ has subexponential growth, the action is block-discrete. On the other hand, we have the following easy observation:

\begin{fact}
Let $X$ be a connected graph of finite valency. Let $G$ be a locally compact group acting properly vertex-transitively on $X$. Then the $G$-action is block-discrete if and only if $G$ has a compact open normal subgroup.
\end{fact}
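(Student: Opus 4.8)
The plan is to identify, in both directions, the wanted compact open normal subgroup with the kernel of the $G$-action on the ``small'' graph $Y$, building $Y$ in the converse direction as a quotient of $X$. Throughout one uses that, since $G$ acts properly and continuously on the discrete connected graph $X$ of finite valency, every vertex stabiliser $G_x$ is a compact open subgroup of $G$.

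First I would treat the implication ``$G$ has a compact open normal subgroup $K$'' $\Rightarrow$ ``block-discrete''. Take $Y=K\backslash X$, the quotient graph (vertices $=$ $K$-orbits, two of them adjacent when they contain adjacent representatives). Since $K$ is compact and $X$ discrete, $K$-orbits are finite, so the projection $\phi\colon X\to Y$ has finite fibres; it is a surjective graph-quotient homomorphism by construction, and $G$-equivariant because $K$ is normal, while $Y$ is connected of finite valency. Let $K'=\ker(G\to\Aut(Y))$; then $K\subseteq K'$, the stabiliser of the vertex $Kx$ in $G$ is $\mathrm{Stab}_G(Kx)=KG_x$, and hence the stabiliser of $Kx$ in $G^Y=G/K'$ is a quotient of $KG_x/K\cong G_x/(G_x\cap K)$, which is finite because $K$ is open. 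So the $G^Y$-action on $Y$ has finite vertex stabilisers and the action is block-discrete (the $G^Y$-action on $Y$ being automatically continuous, with open stabilisers).

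For the converse, assume the action is block-discrete, witnessed by a $G$-equivariant graph-quotient homomorphism $\phi\colon X\to Y$ with finite fibres such that $G^Y\le\Aut(Y)$ has finite vertex stabilisers, and put $K'=\ker(G\to\Aut(Y))$, a normal subgroup. It suffices to show $K'$ is compact and open. For compactness, fix a vertex $x$: every $k\in K'$ sends $x$ into the finite fibre $\phi^{-1}(\phi(x))$ (because $\phi(kx)=k\cdot\phi(x)=\phi(x)$), so $K'_x:=K'\cap G_x$ has finite index in $K'$; and $K'_x$, being the kernel of the continuous $G_x$-action on the discrete graph $Y$, is a closed, hence compact, subgroup of the compact group $G_x$; therefore $K'$ is a finite union of cosets of $K'_x$ and is compact (hence closed). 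For openness, note that $G_{\phi(x)}:=\mathrm{Stab}_G(\phi(x))$ is a finite union of cosets of $G_x$ — since $g\phi(x)=\phi(x)$ forces $gx\in\phi^{-1}(\phi(x))$ — hence is open; moreover $K'\le G_{\phi(x)}$ with $[G_{\phi(x)}:K']=|\mathrm{Stab}_{G^Y}(\phi(x))|<\infty$ by block-discreteness; a finite-index closed subgroup of an open subgroup is open, so $K'$ is open in $G$. Thus $K'$ is compact open normal, which finishes the proof.

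I do not expect a genuine obstacle: the argument is an unwinding of the definitions. The only points deserving care are the bookkeeping that guarantees the relevant subgroups are closed (so that ``finite index $+$ closed $\Rightarrow$ open'' applies), and, in the first direction, checking that $K\backslash X$ is a legitimate graph and $\phi$ a graph-quotient homomorphism — which is immediate once one permits the adjacency relation on $Y$ to become reflexive on certain orbits.
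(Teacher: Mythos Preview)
Your proof is correct and, for the direction the paper actually proves (block-discrete $\Rightarrow$ compact open normal), follows the same route: the desired subgroup is the kernel $K'$ of the $G$-action on $Y$, shown to be open as a finite-index closed subgroup of the open $Y$-vertex-stabiliser $G_{\phi(x)}\supseteq G_x$, and compact via properness on $X$ together with finiteness of the fibre. The paper explicitly leaves the converse to the reader, and your quotient construction $Y=K\backslash X$ is exactly the intended argument.
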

\begin{proof}
We only prove the implication we need, leaving the converse to the reader. Suppose that the action is block-discrete, and let $Y$ be as in its definition. Since the vertex stabilizers in $Y$ include vertex stabilizers in $X$, the $G$-action on $Y$ is continuous. Since the $G$-action on $X$ is proper and fibers are finite (and since $X\to Y$ is a graph-quotient homomorphism), the $G$-action on $Y$ is proper.
Let $W$ be the kernel of the $G$-action on $Y$ and let $H$ be a vertex-stabilizer. Since $H/W$ is finite, $H$ is open and $W$ is a closed subgroup of $H$, the normal subgroup $W$ is open as well. By properness, $W$ is compact, proving the implication.
\end{proof}

We can conclude, as a Corollary of Theorem \ref{bethm}:

\begin{cor}
Trofimov's above conjecture is not true.
\end{cor}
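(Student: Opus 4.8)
The plan is to combine the three results that immediately precede this corollary: Theorem \ref{bethm}, the Fact above, and the statement of Trofimov's conjecture. The strategy is to exhibit a vertex-transitive graph of subexponential growth on which the conjecture fails, by using the CGLC group $G$ produced in Theorem \ref{bethm} as an automorphism group.

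First I would recall that by Theorem \ref{bethm} there is a totally disconnected CGLC group $G$ of intermediate growth (in particular subexponential) with $\WW(G)=1$, so $G$ has no compact open normal subgroup. Next, since $G$ is compactly generated, totally disconnected, and locally compact, pick a compact open subgroup $U$ (which exists by van Dantzig) and a compact generating set, and form the associated Cayley--Abels graph $X$: vertices are cosets $G/U$ and one joins $gU$ to $gsU$ for $s$ in the (symmetrized) generating set. This $X$ is a connected graph of finite valency, $G$ acts on it vertex-transitively, and the action is continuous and proper with compact (open) vertex stabilizers (conjugates of $U$). Moreover $X$ has subexponential growth because it is quasi-isometric to $G$, which has subexponential growth by Theorem \ref{bethm}; in particular $X$ does \emph{not} essentially include a tree (an injective Lipschitz image of the trivalent tree would force exponential growth of $X$).

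Now apply the Fact: since the $G$-action on $X$ is proper and vertex-transitive on a connected graph of finite valency, the action is block-discrete if and only if $G$ has a compact open normal subgroup. But $\WW(G)=1$, so $G$ has no such subgroup, hence the action is \emph{not} block-discrete. Thus $G$ acts vertex-transitively on the connected finite-valency graph $X$, the action is neither block-discrete nor does $X$ essentially include a tree, contradicting Trofimov's conjecture.

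The only point requiring a little care — and the main (mild) obstacle — is matching the hypotheses of the stated conjecture and of the Fact: the conjecture is phrased for an abstract group acting vertex-transitively, while the Fact and the growth statement need the acting group to be locally compact with a \emph{proper} action. This is handled by taking $G$ with its own (CGLC) topology and the Cayley--Abels graph, for which properness and continuity of the action are automatic; alternatively one can replace $G$ by the full automorphism group $\Aut(X)$ with the permutation topology, which is CGLC, acts properly and vertex-transitively, contains $G$ cocompactly (so still has subexponential growth), and still has trivial $\WW$ — the last point following because any compact open normal subgroup of $\Aut(X)$ would intersect $G$ in a compact open normal subgroup of $G$. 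Either way the contradiction with Trofimov's conjecture is reached, which proves the corollary.
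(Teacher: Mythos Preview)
Your proof is correct and follows essentially the same approach as the paper: take the group $G$ of Theorem~\ref{bethm}, form a Cayley--Abels graph $X$, use the quasi-isometry to conclude $X$ has subexponential growth (hence does not essentially include a tree), and invoke the Fact to see the action is not block-discrete. Your additional paragraph about passing to $\Aut(X)$ is unnecessary---the paper simply uses $G$ itself, and the Fact is stated precisely for a locally compact $G$ acting properly, which the Cayley--Abels action satisfies---but it does no harm.
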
 
\begin{proof}
Let $G$ be a totally disconnected CGLC group of intermediate growth and no compact open subgroup, as asserted in the theorem. Let $X$ be a Cayley-Abels graph for $G$ (see \cite[\S 2.E]{CH}): this is a connected graph of finite valency on which $G$ acts continuously, properly and vertex-transitively. Since $G$ is quasi-isometric to $X$, the graph $X$ has subexponential growth, and in particular does not essentially include a tree in the above sense. By the fact above, the action is not block-discrete. So it does not satisfy the conjecture.
\end{proof}

\begin{rem}
Part of the discussion in \cite{Tro2} is about when the above conjecture is specified when the action of the vertex stabilizer $G_v$ on the 1-sphere around one vertex $G_v$ is specified to be, modulo its kernel, a given finite permutation group. We have not tried to describe this permutation group in this construction. However, we can at least say something: we can arrange the counterexample so that the group in this finite permutation group is a 2-group. Indeed, it is enough to construct $G$ as a 2-group. Using the notation from the proof of Theorem \ref{bethm}, we can make a slight change in the construction and assume that $G=F\wr_{\Gamma/\Lambda}^L\Gamma$, where $F$ is a nontrivial finite 2-group and $L$ is a nontrivial subgroup of $F$ with trivial core (e.g., $F$ the dihedral group of order 8 and $L$ a non-central cyclic subgroup of order 2). Then $\mathsf{W}(G)=1$ by Corollary \ref{wg0}, and $G$ has intermediate growth for the same reason as in the proof of Theorem \ref{bethm}. (Alternatively, we can use the construction of Proposition \ref{autreinter}, which yields a 2-group.)

In \cite[Remark 2.3]{Tro2}, Trofimov says that he would be ``rather surprised if [his] conjecture were proved in general", but is optimistic about the case when the local permutation group is primitive. We do not even know if our example can be arranged to even yield transitive local permutation groups (beware that this depends on the choice of Cayley-Abels graph, and forces studying the structure of the Grigorchuk group action, rather than using it as a black box).
\end{rem}

\section{Presentability}\label{presen}

The material of this section is essentially borrowed from an expunged part, appearing in an earlier version of \cite{BCGS} (Arxiv v2), exclusively for discrete groups, in keeping with \cite{BCGS}.

We first introduce the following definition, which is \cite[Def.\ 5.9]{BCGS} in the discrete case.

\begin{defn}\label{dlr}
A CGLC group $H$ is {\bf largely related} if for every epimorphism $G\epi H$ of a compactly presented locally compact group $G$ onto $H$ with discrete kernel, the kernel admits a non-abelian free quotient.
\end{defn}

\begin{defn}
A family $(N_i)_{i\in I}$ of closed normal subgroups of a locally compact group $G$ is {\bf independent} if $N_i$ is not included in $\langle N_j:j\in I\smallsetminus\{i\}\rangle$ for any $i\in I$, or equivalently if the map from $2^I$ to the space $\mathcal{N}(G)$ of closed normal subgroups of $G$ mapping $J$ to $\langle N_j:j\in J\rangle$ is injective. 

(\cite[Def.\ 1.2]{BCGS}) A CGLC group $G$ is {\bf INIP} (infinitely independently presented) if for some/every compactly presented locally compact group $G_0$ with a quotient map $G_0\epi G$ with discrete kernel, the kernel is generated by an infinite independent family of $G_0$-normal subgroups.
\end{defn}

\begin{defn}\label{d_sim}Let $H$ be a group and $L\subset H$ a subgroup. Consider the equivalence relation on $H$: $g_1\sim g_2$ if $g_1$ belongs to the same $L$-double coset as $g_2$ or $g_2^{-1}$. Let $Q$ be the quotient of $H$ by this equivalence relation and $Q^*=Q\smallsetminus\{L\}$ (observe that $L$ is a single equivalence class).
\end{defn}

Now assume that $H$ is a locally compact group, $L$ an open subgroup, $B$ another locally compact group and $A$ a compact open subgroup of $B$. We need to define a locally compact group $G_0$ with a continuous quotient homomorphism with discrete kernel $G_0\to B\wr^A_{H/L}H$. 
In case $A=1$ (so $B$ is discrete and this is a usual wreath product), the definition of $G_0$ is given by the amalgamated product $(B\times L)\ast_L H$. 
As an amalgam of two locally compact groups over a common open subgroup, this is naturally a locally compact group. As an abstract group, this is the quotient of the free product $B\ast H$ by the normal subgroup generated by commutators $[B,L]$; however $B\ast H$ is not a locally compact group in a natural way unless $H$ is discrete.
In general, $G_0$ is defined as the locally compact group
\[G_0=(\Pi_{B,A,H/L})\ast_{A^{H/L}}(A\,\bar{\wr}\,H),\] where \[\Pi_{B,A,H/L}=\{f\in B^{H/L}:f(H/L\smallsetminus\{L\})\subset A\}\qquad \text{(so } \Pi_{B,A,H/L}\simeq B\times A^{H/L\smallsetminus\{L\}}).\] 

Note that $G_0$ is compactly generated as soon as $H,B$ are. From the universal property, there is a unique homomorphism $G_0\to B\wr^A_{H/L}H$ mapping both factors identically. It is continuous and surjective, and has discrete kernel $N$. (That it is continuous with discrete kernel follows from the fact that it restricts to the standard embedding of the open subgroup $A^{H/L}\rtimes L$.) 

Observe that in $G_0$, the normal subgroup $N_g$ generated by $[gBg^{-1},B]$, for $g\in H\smallsetminus L$, only depends on the class of $g$ in $Q^*$. It is easy to see that the $N_g$ generate the kernel $N$.

\begin{prop}\label{p_wr}
If $B\neq A$, then the family of normal subgroups $(N_g)$ of $G_0$ is independent, when $g$ ranges over $Q^*$. In particular, if the double coset space $L\backslash H/L$ is infinite and both $B$ and $H$ are compactly generated, then the semirestricted wreath product $G=B\wr^A_{H/L}H$ is INIP, and moreover it is largely related.
\end{prop}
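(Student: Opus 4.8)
The plan is to reduce the whole statement to two facts about the discrete group $G_0$: that the family $(N_g)_{g\in Q^*}$ is independent, and that each $N_g$ admits a non-abelian free quotient. Granting these, the ``INIP'' and ``largely related'' conclusions follow from the criteria of \cite{BCGS}, once one also knows that $Q^*$ is infinite and that $G_0$ is compactly presented. The first is immediate: $Q^*$ is the quotient of $L\backslash H/L\smallsetminus\{L\}$ by the involution $[g]\mapsto[g^{-1}]$, whose orbits have size at most two, so $Q^*$ is infinite whenever $L\backslash H/L$ is. For the second, one may assume $B$ and $H$ are compactly presented (the compactly generated case reducing to this); then $G_0$ is an amalgam of the compactly presented groups $\Pi_{B,A,H/L}\cong B\times A^{H/L\smallsetminus\{L\}}$ and $A\,\bar\wr\,H\cong A^{H/L}\rtimes H$ over the common compact open subgroup $A^{H/L}$, hence is compactly presented.

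Everything group-theoretic takes place inside the normal subgroup $\bar W:=\langle B_v:v\in X\rangle$ of $G_0$, where $X=H/L$ and, for $v=g_vL$, we write $B_v:=g_vBg_v^{-1}$ for the conjugate of the basepoint copy $B=B_L\subset\Pi_{B,A,H/L}$, with compact open subgroup $A_v:=g_vAg_v^{-1}$. One checks that $\bar W$ contains $N$, maps onto $B^{(X)}\subset G$ with kernel $N$, and that $\langle\!\langle N_g:g\in Q^*\rangle\!\rangle=N$ already inside $\bar W$. The key structural point — and what I expect to be the main work of the proof — is the following: \emph{$\bar W$ is the graph product of the copies $B_v$, relative to their common subgroups $A_v$, over the empty graph}, i.e.\ the quotient of $\ast_{v\in X}B_v$ by the relations $[A_v,A_w]=1$ $(v\neq w)$; and, more generally, for any $H$-invariant graph $\Phi$ on $X$, the quotient of $\bar W$ by the normal closure of $\{[B_v,B_w]:\{v,w\}\in E(\Phi)\}$ is the corresponding relative graph product $W_\Phi$ over $\Phi$, in which each $B_v$ embeds, in which the $A_v$ together generate $\bigoplus_{v\in X}A_v$, and in which $\langle B_v:v\in S\rangle$ is the relative graph product over the induced subgraph $\Phi[S]$ for every $S\subseteq X$. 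When $A=1$ this is just the classical assertion that $\langle B_v\rangle_{G_0}=\ast_{v\in X}B_v$, with $W_\Phi$ the ordinary graph product; the general case should follow from the same Bass--Serre analysis of the amalgam $G_0=\Pi_{B,A,H/L}\ast_{A^{H/L}}(A\,\bar\wr\,H)$ (reduced words in $\bar W$ remaining reduced in $G_0$), combined with the normal form theory for graph products of groups. The mild delicacy here is that the compact open subgroup $A^{H/L}$ of $G_0$ is the \emph{unrestricted} power, whereas inside $\bar W$ the subgroup generated by the $A_v$ is only the restricted sum.

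Granting the structural lemma, independence is short. Fix $g_0\in Q^*$, and let $\mathcal G$ be the orbital graph on $X$ whose edge set is the $H$-orbit of $\{L,g_0L\}$, so that the relations defining $R:=\langle\!\langle N_g:g\in Q^*\smallsetminus\{g_0\}\rangle\!\rangle$ inside $\bar W$ are exactly $[B_v,B_w]=1$ for $\{v,w\}\notin E(\mathcal G)$; thus $\bar W/R=W_{\overline{\mathcal G}}$, the relative graph product over the complement $\overline{\mathcal G}$. Since $\{L,g_0L\}$ is a non-edge of $\overline{\mathcal G}$, the subgroup $\langle B_L,B_{g_0L}\rangle$ of $W_{\overline{\mathcal G}}$ is the relative graph product over two non-adjacent vertices, namely $(B\times A)\ast_{A\times A}(A\times B)$; choosing $b,b'\in B\smallsetminus A$, the word $[b,b']=(b,1)(1,b')(b^{-1},1)(1,b'^{-1})$ is reduced of syllable length four in this amalgam, hence nontrivial. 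So the image of $N_{g_0}$ in $\bar W/R$ is nontrivial, i.e.\ $N_{g_0}\not\subseteq R$; since $\langle\!\langle N_g:g\in S\rangle\!\rangle$ is monotone in $S$, this is precisely independence of $(N_g)_{g\in Q^*}$.

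For the free quotient, fix $g_0$ again; then $N_{g_0}=\ker(\bar W\to W_{\mathcal G})$. The group $\bar W$, being a relative graph product over the empty graph, is the fundamental group of the star-shaped graph of groups with central vertex group $\bigoplus_{v\in X}A_v$ and one leg $B_v$ per $v\in X$, so it acts on the associated Bass--Serre tree $T$; the vertex stabilizers are the conjugates of $\bigoplus_{v\in X}A_v$ and of the $B_v$, and each of these embeds into $W_{\mathcal G}$ by the structural lemma, hence meets $N_{g_0}$ trivially. Therefore $N_{g_0}$ acts freely on $T$ and is a free group. It is nontrivial because $B\neq A$ makes $[B_L,B_{g_0L}]\neq1$ in $\bar W$ while it dies in $W_{\mathcal G}$; and it is non-abelian because $\mathcal G$, being an infinite vertex-transitive graph with at least one edge (recall $X$ is infinite), contains two disjoint edges $\{v,v'\},\{w,w'\}$, and the commutators $[b_v,b'_{v'}]$ and $[b_w,b'_{w'}]$ (with all group elements chosen outside the $A$'s) do not commute in $\bar W$, so generate a non-abelian subgroup of the free group $N_{g_0}$. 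Thus each $N_g$ surjects onto a non-abelian free group, and the cited results of \cite{BCGS} now yield that $G=B\wr^A_{H/L}H$ is INIP and largely related. The hard part is really the structural lemma of the second paragraph; with it in hand, the rest is bookkeeping together with standard Bass--Serre theory.
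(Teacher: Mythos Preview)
Your overall strategy aligns with the paper's: both hinge on the structural fact that $U=\bar W$ is the ``relative free product'' of the $B_v$ (the quotient of $\ast_v B_v$ by the relations $[A_v,B_w]=1$ for $v\neq w$), and both detect independence via the two-vertex amalgam $(B\times A)\ast_{A\times A}(A\times B)$. Your independence argument is correct and essentially equivalent to the paper's (the paper kills all other $B_x$'s and maps to $U/J_s$, while you identify the two-vertex parabolic inside $W_{\overline{\mathcal G}}$).

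There is, however, a genuine gap in your derivation of ``largely related''. You prove that each $N_{g_0}=\ker(\bar W\to W_{\mathcal G})$ is a non-abelian free group and then invoke \cite{BCGS}. But ``largely related'' asks that for every compactly presented $P\epi G$ the kernel has a non-abelian free quotient; unwinding the definitions, this amounts to showing that $N/N_I$ has a non-abelian free quotient for every \emph{finite} $I\subset Q^*$. Knowing that the \emph{subgroup} $N_{g_0}\subset N$ is free says nothing about these \emph{quotients} of $N$: the image $N_{g_0}N_I/N_I\cong N_{g_0}/(N_{g_0}\cap N_I)$ is merely a quotient of a free group, and need be neither free nor non-abelian. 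No criterion in \cite{BCGS} bridges this. What the paper does instead---and what your structural lemma would equally support---is: for $s\notin I$, map $\bar W/N_I$ onto the two-vertex quotient $U/J_s\cong(B\times A)\ast_{A\times A}(A\times B)$ by killing all $B_x$ with $x\notin\{\xi,s\xi\}$; since $N_{Q^*\smallsetminus\{s\}}\subset J_s$, this exhibits $N/N_{Q^*\smallsetminus\{s\}}$ (hence $N/N_I$) as surjecting onto the free kernel of $U/J_s\to B_\xi\times B_{s\xi}$. (When $|B/A|=2$ the paper uses two indices $s,t$ to force non-abelianness; your disjoint-edges trick is neater but, again, it is applied to $N_{g_0}$ rather than to $N/N_I$.)

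A smaller error: your star-shaped graph of groups for $\bar W$ is wrong. With central vertex $\bigoplus_v A_v$, leaf vertices $B_v$, and edge groups $A_v$, the resulting amalgam does \emph{not} satisfy $[A_w,B_v]=1$ for $w\neq v$: for $a_w\in A_w\smallsetminus\{1\}$ and $b_v\in B_v\smallsetminus A_v$, the word $a_w b_v a_w^{-1} b_v^{-1}$ is reduced of length four. The correct leaf groups are $B_v\times\bigoplus_{w\neq v}A_w$ with edge groups $\bigoplus_v A_v$; with this fix your Bass--Serre argument for freeness of $N_{g_0}$ goes through, but as noted above that is not the statement you actually need.
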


This extends a result of \cite{Cgd}, where it was shown under the same assumptions (and in the discrete case) that the wreath product $B\wr_{G/H}G$ is infinitely presented.

\begin{proof}
In $G_0$, denote by $H$ and $B$ the obvious copies of these groups. For $h\in H$ and $x=hL\in H/L$, define $B_x=hBh^{-1}$, and $A_x=hAh^{-1}\subset B_x$. Note that in $G_0$, the subgroup $U$ generated by all $B_x$ is the quotient of their free product by the relations $[A_x,B_y]$ for $x\neq y\in H/L$.

Observe that $Q^*$ is obtained from $L\backslash H/L$ by removing one point and modding out an action of the cyclic group of order two (by inversion). So if $L\backslash H/L$ is infinite, so is $Q^*$. So once we will have proved that $(N_g)_{g\in Q^*}$ is independent, it will follow that the quotient $G$ is INIP.

Lift $Q^*$ to a subset of $H$. If $I$ is a subset of $Q^*$, let $N_I$ be the normal subgroup of $H$ generated by $\bigcup_{g\in I}N_g$. Then $N_I$ is generated, as a normal subgroup of $H$, by $\bigcup_{g\in I}[B_{gH},B]$. That $(N_g)$ is independent means that for every $s\in Q^*$, the group $N_{Q^*\smallsetminus\{s\}}$ is not equal to $N_{Q^*}$. We will actually show that for every $s\in Q^*$, the group $N_{Q^*}/N_{Q^*\smallsetminus\{s\}}$ is non-trivial.

By a straightforward verification, $N_{Q^*\smallsetminus\{s\}}$ is generated, as a normal subgroup of $U$, by \[\bigcup_{g\in Q^*\smallsetminus\{s\},\gamma\in H}\gamma[B_{g\xi},B_\xi]\gamma^{-1}\]
 (where $\xi$ denotes the base-point in $H/L$), or equivalently by
\begin{equation}\label{gga}\bigcup_{g\in Q^*\smallsetminus\{s\}}\bigcup_{\gamma\in H}[B_{\gamma g\xi},B_{\gamma\xi}].\end{equation}
Let $J_s$ be the normal subgroup of $U$ generated by $$\bigcup_{x\in H/L\setminus\{\xi,s\xi\}}B_x,$$
so $U/J_s$ is naturally identified with
\[\Xi=(B_\xi\ast B_{s\xi})/\langle [A_\xi,B_{s\xi}],[B_\xi,A_{s\xi}]\rangle\simeq (B_\xi\times A_{s\xi})\ast_{A_\xi\times A_{s\xi}}(A_\xi\times B_{s\xi}).\] 

The kernel $K$ of $U/J_s\to B_\xi\times B_{s\xi}$ is discrete and free: indeed it acts trivially on the Bass-Serre tree for this amalgam decomposition. If $|B/A|\ge 3$, it is non-abelian.

Observe that $N_{Q^*\smallsetminus\{s\}}$ is included in $J_s$: indeed it is generated by commutators $[B_x,B_y]$ with $\{x,y\}\neq\{\xi,s\xi\}$, so each of these commutators is contained in $J_s$. Thus there is a natural epimorphism $U/N_{Q^*\smallsetminus\{s\}}\epi B_\xi\ast B_{s\xi}$. It restricts to an epimorphism
$$N_{Q^*}/N_{Q^*\smallsetminus\{s\}}\epi K.$$

If $|B/A|=2$, we can fix the argument as follows: first in this case $A$ is normal and hence we can mod it out and hence suppose that $B$ is discrete. Then we show that if $s,t$ are distinct in $Q^*$, the group $N_{Q^*}/N_{Q^*\smallsetminus\{s,t\}}$ surjects onto a non-abelian free group, namely the kernel of the projection $B_\xi\ast (B_{s\xi}\times B_{t\xi})\epi B_\xi\times B_{s\xi}\times B_{t\xi}$.

This shows that, whenever $B\neq A$, for every subset $I\subset Q^*$ whose complement contains at least two elements, $N_{Q^*}/N_I$ has a non-abelian free quotient. 

Thus if $Q^*$ is infinite, and if $P$ is compactly presented group with an epimorphism $\pi$ onto $B\wr^A_{G/H}G$, this epimorphism factors through the projection $G_0/N_I$ for some finite $I\subset Q^{*}$. So, the kernel of $\pi$ admits $N_{Q^*}/N_I$ as a quotient and therefore possesses a non-abelian free group as a quotient. This shows that $B\wr_{G/H}G$ is largely related.
\end{proof}
 
We now turn to another similar example based on Coxeter groups. 

\begin{defn} 
Consider a {\bf Coxeter matrix} on $V$  i.e.\ a symmetric matrix $\mu:V\times V\to \{1,2,3,\dots,\infty\}$ with with diagonal entries equal to 1 and non-diagonal entries in $\{2,3,\dots,\infty\}$. It defines the Coxeter group with Coxeter presentation
$$W(V,\mu)=\left\langle (w_v)_{v\in V}\mid \left((w_sw_t)^{\mu(s,t)}\right)_{(s,t)\in V^2}\right\rangle.$$ 

Let a group $H$ act on $V$.
Now assume that $\mu$ is $H$-invariant, in the sense that $\mu(gs,gt)=\mu(s,t)$ for all $g\in H$ and $(s,t)\in V^2$. This induces a natural action of $H$ by automorphisms on $W(V,\mu)$, so that $g\cdot w_s=w_{gs}$. The corresponding semidirect product
$$W(V,\mu)\rtimes H$$
is called a {\bf wreathed Coxeter group}. 
\end{defn}

When $H$ is locally compact and acts continuously on $V$ (which is discrete), that is, with open stabilizers, then the wreathed Coxeter group $W(V,\mu)\rtimes H$ is a topological group ($W(V,\mu)$ being discrete and normal). 

This group was already considered, from a different perspective, in \cite{CSV2}, when $H$ is discrete. 

If $H$ acts with finitely many orbits on $V$ and is finitely generated discrete (resp.\ compactly generated), then the wreathed Coxeter group $W(V,\mu)\rtimes H$ is discrete finitely generated (resp.\ compactly generated).

\begin{thm}
Assume $V\neq\emptyset$. The wreathed Coxeter group $G=W(V,\mu)\rtimes H$ is compactly presented if and only if $V$ has finitely many $H$-orbits with compactly generated stabilizers, $H$ is compactly presented, and the set of pairs $\{(v,w)\in V^2:\mu(v,w)<\infty\}$ consists of finitely many $H$-orbits.

In particular, if $V=H/L$ and $\mu$ has no $\infty$ entry, then this holds if and only if $L\backslash H/L$ is finite.

If the set of pairs $\{(v,w)\in V^2:\mu(v,w)<\infty\}$ consists of infinitely many $H$-orbits, then $G$ is INIP.
\end{thm}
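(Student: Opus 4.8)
The plan is to reduce all three assertions to a single packaging of the kernel $W(V,\mu)$ as an ``$H$-group''. Say a discrete group $N$ with a continuous $H$-action by automorphisms (i.e.\ with open stabilizers) is \emph{finitely presented as an $H$-group} if it admits a presentation whose generating set is $H$-stable and splits into finitely many $H$-orbits with compactly generated stabilizers, and whose set of relators is $H$-stable and splits into finitely many $H$-orbits of uniformly bounded length. I would first record the sufficient condition that, \emph{if $H$ is compactly presented and $N$ is finitely presented as an $H$-group, then $N\rtimes H$ is compactly presented}; this parallels the fact recalled before the theorem that $N\rtimes H$ is compactly generated as soon as $H$ is and $N$ is finitely $H$-generated. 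The proof is a van Kampen argument on a Cayley--Abels complex: one writes the evident presentation of $N\rtimes H$ (a compact presentation of $H$; one generator $w$ per $N$-generator-orbit; the $N$-relator orbits; the relators expressing that each such $w$ is centralized by a compact generating set of its stabilizer; and the bounded action relators $t\,w\,t^{-1}=w'$ for $t$ in a compact generating set of $H$) and checks that every relation has boundedly-computable area, using that $H$ is compactly presented to fill loops lying in $H$.

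Sufficiency of the theorem is then immediate: $W(V,\mu)$ is finitely presented as an $H$-group. Its Coxeter presentation has generators $(w_v)_{v\in V}$, whose $H$-orbits are the $V$-orbits (finitely many, by hypothesis) with stabilizers the point stabilizers $H_v$ (compactly generated, by hypothesis); and relators $w_v^2$ together with $(w_sw_t)^{\mu(s,t)}$ for $\mu(s,t)<\infty$, whose $H$-orbits are exactly the orbits of finite-label pairs (finitely many, by hypothesis). As $\mu$ is $H$-invariant it is constant on each such orbit, so these relators have uniformly bounded length. With $H$ compactly presented the sufficient condition applies and $G$ is compactly presented.

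For necessity I would proceed in four steps. Compact presentability forces compact generation, which by the remark preceding the theorem yields that $H$ is compactly generated and $V$ has finitely many $H$-orbits. Next, the semidirect splitting $H\hookrightarrow G\twoheadrightarrow H$ exhibits $H$ as a retract of $G$, and a retract of a compactly presented group is compactly presented, so $H$ is compactly presented. Finiteness of the set of finite-label orbits is the contrapositive of the third assertion proved below: infinitely many such orbits would make $G$ INIP, hence not compactly presented. This leaves the necessity of compactly generated stabilizers, which is the main obstacle. Here I would adapt \cite{Cgd}: if some stabilizer $L=H_v$ is not compactly generated, write it as a strictly increasing union $\bigcup_n L_n$ of open compactly generated subgroups and pick $c_n\in L_{n+1}\smallsetminus L_n$; the commutation relations $[w_v,c_n]=1$ then form an infinite family, and by the Bass--Serre/amalgam mechanism of Proposition~\ref{p_wr} one shows no $[w_v,c_n]=1$ follows from the others together with the remaining (bounded) relators, so no compact presentation can exist. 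Establishing this independence in the presence of the Coxeter relators is the delicate point.

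Finally the INIP assertion and the special case. For INIP I would mirror Proposition~\ref{p_wr}: form a compactly presented cover $G_0\twoheadrightarrow G$ with discrete kernel by taking the same data but \emph{omitting} the off-diagonal Coxeter relators, i.e.\ the free product $\ast_{v\in V}\langle w_v\mid w_v^2\rangle$ amalgamated with the $H$-piece carrying the action and stabilizer relators. The kernel of $G_0\twoheadrightarrow G$ is the normal closure of the relators $(w_sw_t)^{\mu(s,t)}$ ($s\neq t$, $\mu(s,t)<\infty$), which, since such a relator depends only on the unordered pair, group into normal subgroups $R_\omega$ indexed by the $H$-orbits $\omega$ of off-diagonal finite-label pairs. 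Passing to the quotient that imposes all but one orbit's Coxeter relations and acting on the Bass--Serre tree of the resulting amalgam, exactly as in Proposition~\ref{p_wr}, one checks $R_\omega\not\subset\langle R_{\omega'}:\omega'\neq\omega\rangle$; infinitely many orbits then yield an infinite independent family and $G$ is INIP. For the displayed special case $V=H/L$ with $\mu$ finite-valued, the three conditions specialize directly: $V$ is a single orbit with stabilizer $L$, so ``finitely many orbits with compactly generated stabilizers'' becomes ``$L$ compactly generated''; and the finite-label pairs fill $(H/L)^2$, whose $H$-orbits correspond bijectively to the double cosets $L\backslash H/L$, so ``finitely many finite-label orbits'' becomes finiteness of $L\backslash H/L$. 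Under the standing hypotheses that $H$ is compactly presented and $L$ is compactly generated, the operative condition is thus precisely finiteness of $L\backslash H/L$.
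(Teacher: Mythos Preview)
Your sufficiency argument and the reduction of necessity to the INIP statement are fine and match the paper's approach (the paper builds $G_0$ as an iterated amalgam $(\cdots(H\ast_{L_1}(C_1\times L_1))\cdots)\ast_{L_k}(C_k\times L_k)$, which is just a concrete instance of your ``finitely presented as an $H$-group'' packaging).

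The gap is in your INIP argument. You propose to prove independence of the $R_\omega$ ``exactly as in Proposition~\ref{p_wr}'', by passing to a quotient and using a Bass--Serre tree. But the mechanism of Proposition~\ref{p_wr} does not transfer: there the relators are commutators $[B_x,B_y]$, and one kills all $B_z$ for $z\notin\{\xi,s\xi\}$ to land in $B_\xi\ast B_{s\xi}$, where the other $N_g$ automatically die because each of their generating commutators involves some killed $B_z$. In the Coxeter case the relators are $(w_sw_t)^{\mu(s,t)}$, and if you kill all $w_v$ for $v\notin\{s,t\}$ then a relator $(w_sw_u)^{\mu(s,u)}$ from another orbit maps to $w_s^{\mu(s,u)}$, which is $w_s$ itself whenever $\mu(s,u)$ is odd. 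So the projection does not send the other $R_{\omega'}$ to $1$, and you cannot conclude. Nor is there an obvious amalgam structure on $G_0/\langle R_{\omega'}:\omega'\neq\omega\rangle$ to act on.

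The paper's proof uses a different key lemma: Tits' theorem (Theorem~\ref{titsth}) that in any Coxeter group the element $\sigma_s\sigma_t$ has order exactly $\mu(s,t)$. Modding out all relator-orbits except $\omega$ yields $W(V,\mu')\rtimes H$ where $\mu'$ is obtained from $\mu$ by replacing the $\omega$-entries by $\infty$; this is again a (wreathed) Coxeter group, and Tits' theorem gives directly that $w_sw_t$ has infinite order there, so $(w_sw_t)^{\mu(s,t)}\neq 1$ and $R_\omega$ survives. This is the missing ingredient in your sketch; once you invoke Tits, the independence (and hence INIP) follows as the paper indicates, e.g.\ in the worked case $H=\Z$, $V=\Z$.
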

\begin{proof}
We only sketch the proof. Fix $v_1,\dots,v_k$ representative of the $H$-orbits in $V$, with stabilizers $L_1,\dots,L_k$. Let for $i=1,\dots,k$, let $C_i=\langle t_i\rangle$ be a copy of the cyclic group of order 2. Consider the amalgam $G_0$ of all $C_i\times L_i$ and $G$ over their intersections: it can be constructed iteratively:
\[G_0=((\cdots(G\ast_{L_1}(C_1\times L_1))\ast_{L_2}(C_2\times L_2)\cdots )\ast_{L_k}(C_k\times L_k))\]
Then $G_0$ is compactly presented as soon as $H$ is compactly presented and all $L_i$ are compactly generated. Then the wreathed Coxeter group is the quotient by the relations
is the quotient by $r_{g,h}=(gt_ig^{-1}ht_jh^{-1})^{\mu(gv_i,hv_j)}$ whenever $\mu(gv_i,hv_j)<\infty$. Actually two such relations $r_{g,h}$ and $r_{g',h'}$ are equivalent as soon as $(gv_i,hv_j)=(g'v_i,h'v_j)$. Hence if there are finitely many such orbits of pairs, then $G$ is compactly presented. 

The converse follows the same lines as the case of wreath products, relying on a well-known result of Tits, Theorem \ref{titsth}.

Because of the similarity with the proof of Proposition \ref{p_wr}, we will prove the result in a particular case that is enough to encompass all the differences, namely the case when $H=\Z=\langle t\rangle=X$ (simply transitive action). The reader is invited to prove the general case as an exercise.

So we have to prove that in the free product $\Gamma=\langle t,w|w^2=1\rangle$, the family of relators $r_n=(wt^nwt^{-n})^{\mu(0,n)}$, for $n\in\Z^1=\{n\ge 1:\;\mu(0,n)<\infty\}$, is independent.

If $p\in\Z^1$, let $\Gamma_{[p]}$ be the group obtained by modding out $\Gamma$ by all relators $r_n$ for $n\neq p$, and let $\mu'$ be the matrix obtained from $\mu$ by replacing all entries $\mu(n,n+p)$ by $\infty$.
We see that in $\Gamma_{[p]}=W(V,\mu')\rtimes\Z$. By Tits' theorem, $wt^pwt^{-p}$ has infinite order in $\Gamma_{[p]}$, so $r_p\neq 1$ in $\Gamma_{[p]}$. This proves independency of the family of relators. 
\end{proof}

\begin{thm}[Tits]\label{titsth}
Given a Coxeter group generated by involutions $(\sigma_s)_{s\in S}$, subject to relators $(\sigma_s\sigma_t)^{\mu(s,t)}$ for all $s,t$ (where $\mu(s,t)_{(s,t)\in S\times S}$ is a Coxeter matrix), the element $\sigma_s\sigma_t$ has order exactly $\mu(s,t)$, and every subgroup generated by a subset $(\sigma_s)_{s\in T}$ is a Coxeter group over this system of generators.
\end{thm}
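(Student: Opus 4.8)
The plan is to prove this via the geometric (Tits) representation, following the classical treatment. First I would fix a real vector space $V$ with basis $(e_s)_{s\in S}$ and let $B$ be the symmetric bilinear form determined by $B(e_s,e_t)=-\cos(\pi/\mu(s,t))$ (read as $-1$ when $\mu(s,t)=\infty$; note $B(e_s,e_s)=1$). For each $s\in S$ put $\rho_s(v)=v-2B(e_s,v)e_s$, a $B$-isometry of order $2$ fixing the hyperplane $e_s^{\perp}$ and negating $e_s$. The one genuine computation is the order of $\rho_s\rho_t$: this transformation is the identity on $e_s^{\perp}\cap e_t^{\perp}$ and stabilizes the plane $P=\R e_s+\R e_t$, on which $B$ has Gram matrix $\left(\begin{smallmatrix}1&-c\\-c&1\end{smallmatrix}\right)$ with $c=\cos(\pi/m)$ and $m=\mu(s,t)$; if $m<\infty$ this form is positive definite, $\rho_s,\rho_t$ restrict to orthogonal line-reflections making angle $\pi/m$, and $\rho_s\rho_t$ is a rotation of $P$ by $2\pi/m$, of order exactly $m$; if $m=\infty$ the form is positive semidefinite with one-dimensional radical and $\rho_s\rho_t$ restricts to a nontrivial unipotent of $P$, of infinite order. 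Hence $s\mapsto\rho_s$ extends to a homomorphism $\rho\colon W(S,\mu)\to\GL(V)$, and since $\sigma_s\sigma_t$ has order dividing $m$ in $W(S,\mu)$ by the defining relator while $\rho(\sigma_s\sigma_t)$ has order exactly $m$, the order of $\sigma_s\sigma_t$ in $W(S,\mu)$ is exactly $\mu(s,t)$; in particular $\rho$ is faithful on $\langle\sigma_s,\sigma_t\rangle$, which is dihedral of order $2\mu(s,t)$. This settles the first assertion, and is already all that the application above needs (there $\mu'(0,p)=\infty$, so $w_0w_p=wt^pwt^{-p}$ has infinite order in $\Gamma_{[p]}$).

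For the statement on standard parabolic subgroups $W_T=\langle\sigma_t:t\in T\rangle$, $T\subseteq S$, I would run the standard root-system argument. Put $\Phi=\{w\cdot e_s:w\in W(S,\mu),\,s\in S\}\subseteq V$, writing $w\cdot v$ for $\rho(w)v$, and prove by a simultaneous induction on the word length: (i) every $\alpha\in\Phi$ has either all coordinates in the basis $(e_s)$ nonnegative --- giving the set $\Phi^{+}$ --- or all nonpositive --- giving $\Phi^{-}$ --- and $\Phi=\Phi^{+}\sqcup\Phi^{-}$; (ii) $\ell(w)=\#\{\alpha\in\Phi^{+}:w\cdot\alpha\in\Phi^{-}\}$ for every $w$. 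From (i)--(ii) one extracts the sign lemma $\ell(sw)<\ell(w)\Leftrightarrow w^{-1}\cdot e_s\in\Phi^{-}$ and then the Exchange Condition for $(W(S,\mu),S)$. Now if $w\in W_T$ and $s_1\cdots s_k$ is a reduced expression, then $w^{-1}\cdot e_{s_1}\in\Phi^{-}$; but $W_T$ preserves the subspace $V_T=\bigoplus_{t\in T}\R e_t$ and each $\rho_t$ ($t\in T$) sends $e_{s_1}$ into $e_{s_1}+V_T$, so for $s_1\notin T$ the vector $w^{-1}\cdot e_{s_1}$ has $e_{s_1}$-coordinate $1>0$, a contradiction; hence $s_1\in T$, and replacing $w$ by $s_1w\in W_T$ and inducting on $\ell(w)$ shows that every reduced $S$-expression of an element of $W_T$ uses only letters of $T$. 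Consequently the $S$-length and the $T$-length agree on $W_T$, so $(W_T,T)$ inherits the Exchange Condition and is therefore itself a Coxeter system; its Coxeter matrix is $\mu|_{T\times T}$ because $\mathrm{ord}(\sigma_s\sigma_t)=\mu(s,t)$ by the first part.

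The main obstacle is step (i)--(ii): establishing the dichotomy $\Phi=\Phi^{+}\sqcup\Phi^{-}$ together with the formula $\ell(w)=\#\{\alpha\in\Phi^{+}:w\cdot\alpha\in\Phi^{-}\}$ requires a somewhat delicate double induction, and this is where all the content of the Exchange Condition --- hence of the parabolic-subgroup assertion --- is concentrated; by contrast the rank-two order computation of the first paragraph is elementary. An alternative to this purely combinatorial route is to let $W(S,\mu)$ act on the Tits cone $\bigcup_{w\in W(S,\mu)}w\cdot\overline{C}\subseteq V^{*}$ (where $C$ is the open fundamental chamber), identify $W_T$ with the stabilizer of the face of $\overline{C}$ cut out by the walls indexed by $S\smallsetminus T$, and run a fundamental-domain / ping-pong argument; this replaces the induction by a geometric argument of comparable length. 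In either approach, once $(W_T,T)$ is known to be a Coxeter system the identification of its Coxeter matrix with the restriction of $\mu$ is immediate from the first part.
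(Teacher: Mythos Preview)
The paper does not actually prove this statement: it merely records the two Bourbaki references \cite[V.\S 4.3 Prop.~4]{bki} and \cite[IV.\S 1.8 Th.~2]{bki} and moves on. Your proposal is a correct and complete sketch of the classical proof via the geometric (Tits) representation --- the rank-two computation giving the exact order of $\sigma_s\sigma_t$, followed by the root-system/Exchange-Condition argument for standard parabolic subgroups --- which is essentially the argument Bourbaki carries out in those very sections. So there is no divergence of approach to discuss: you have supplied in outline precisely what the paper defers to a citation, and your identification of the inductive dichotomy $\Phi=\Phi^{+}\sqcup\Phi^{-}$ as the nontrivial core is accurate.
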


This follows from \cite[V.\S 4.3 Prop.~4]{bki} and \cite[IV.\S 1.8 Th.~2]{bki}.

\begin{ex}
The group $\Gamma$ of permutations of $\Z$ generated by the transposition $0\leftrightarrow 1$ and the shift $n\mapsto n+1$, which is isomorphic to wreathed Coxeter group $\textnormal{Sym}_0(\Z)\rtimes\Z$, is INIP; here $\textnormal{Sym}_0(\Z)$ denotes the group of finitely supported permutations of $\Z$. That the group $\Gamma$ is infinitely presented is implicit in B.H.\ Neumann \cite{Neu}, who expressed it as quotient of a finitely generated group by a properly increasing union of finite normal subgroups. Half a century later, it was mentioned by St\"epin \cite{stepin1983} as an example of a finitely generated group that is {\bf locally embeddable in finite groups} in the sense of Maltsev (this also means: approximable by finite groups for the topology of the space of marked groups) but is not residually finite, a combination that cannot be achieved by finitely presented groups.
\end{ex}

\section{Variants using commensurating actions}\label{otherw}

Let $X$ be a set and  $\mathcal{M}=(M_i)_{i\in I}$ be partition of $X$ (i.e., pairwise disjoint and covering $X$). 
Let $B$ be a locally compact group and $\mathcal{A}=(A_i)_{i\in I}$ a family of compact open subgroups. Consider the subgroup of $B^X$ generated by its subgroups $\prod_{i\in I}{A_i}^{M_i}$ and $B^{(X)}$. Denote it by $B^{X,\mathcal{M},\mathcal{A}}$. 

For instance, if $I$ is a singleton $\{1\}$ and $M_1=X$, then this is precisely $B^{X,A}$. The main motivating case is when $I$ has two elements.

The $B^{X,\mathcal{M},\mathcal{A}}$ is endowed with the group topology making $\prod_{i\in I}{A_i}^{M_i}$ a compact open subgroup. It is standard (e.g., follows from Lemma \ref{extop}) that this is well-defined.

Let now $H$ be a locally compact group and assume that $X$ is a continuous discrete $H$-set. 

Assume that the family $\mathcal{M}$ is {\bf uniformly commensurated} by $H$, in the sense that for every $h\in H$, we have ($\bigtriangleup$ denoting symmetric difference):
\[\sum_i\#(M_i\bigtriangleup hM_i)<\infty.\]

Note that if $H$ is finitely generated, then this forces all but finitely many of the $M_i$ to be $H$-invariant; this can be extended to the case when $H$ is compactly generated (see Proposition \ref{cg_fin}), but not in general. For instance, for $H=\Q_2$ and $X=\Q_2/\Z_2$, one can consider the family (modulo $\Z_2$) indexed by $\N$: $M_0=\Z_2/\Z_2=\{0\}$, $M_i=(2^{-i}\Z_2\smallsetminus 2^{-i+1}\Z_2)/\Z_2$ for $i\ge 1$.

Under the above assumptions, the action of $H$ on $B^X$ preserves $B^{X,\mathcal{M},\mathcal{A}}$, and is continuous. Therefore the semidirect product 
\[B\wr_X^{\mathcal{M},\mathcal{A}}H=B^{X,\mathcal{M},\mathcal{A}}\rtimes H\]
 is a locally compact group.

Let us focus on a specified case $I=\{1,2\}$, $B$ is a finite group, $A_1=\{1\}$ and $A_2=B$; we can only specify $M=M_2$ since $M_1$ is its complement. Then denote $B^{X,\mathcal{M},\mathcal{A}}=B^{(X,M]}$: it is just the direct product $B^{(X\smallsetminus M)}\times B^M$. The assumption of commensuration reduces to the requirement that $M$ is commensurated by the $H$-action: $M\bigtriangleup hM$ is finite for all $h\in H$. Denote $B\wr_{(X,M]}H=B^{(X,M]}\rtimes H$ and call it {\bf half-restricted wreath product}. This important particular case was introduced by Kepert and Willis \cite{KW}. It was used by Bhattacharjee and Macpherson \cite{BM} to exhibit a compactly generated totally disconnected locally compact group that is uniscalar but has no open compact normal subgroup.

\begin{ex}
Here is one particular case where a group naturally occurs and actually turns out to be a half-restricted wreath product. Let $\K$ be the field $\mathbf{F}_q(\!(t)\!)$ of Laurent series over the finite field $\mathbf{F}_q$. Then the affine group $\K\rtimes\K^*$ over $\K$ can naturally be identified with
\[\mathbf{F}_q\wr_{(X,M]}\K^*,\]
where $X=\K^*/\K^*_1\simeq\Z$ (quotient by the subgroups of elements of modulus 1) and $M$ is the image of the closed ball of radius 1 (corresponding in $\Z$ to the set $\N$ of non-negative integers).

This shows that this construction can produce non-unimodular groups, in contrast with Proposition \ref{unimod}.
\end{ex}

\begin{ex}\label{envw2}
Consider $X=\Z\times\{1,2\}$, $M_1=(\N\times\{1\})\sqcup((\Z\smallsetminus\N)\times\{2\}$ and $M_2$ its complement. Choose $B=\mathbf{F}_q$ (finite field), $A_1=B$, $A_2=\{0\}$; finally let $\Z$ act on $X$ by $n\cdot (m,i)=(m+n,i)$. 
 Then $B\wr_{(X,M]}H$ can be identified with the semidirect product $(\mathbf{F}_q(\!(t)\!)\times\mathbf{F}_q(\!(t^{-1})\!))\rtimes\Z$, where the positive generator of $\Z$ acts by multiplication by $t$ on both sides. This group naturally includes a cocompact lattice isomorphic to the lamplighter group $\mathbf{F}_q\wr\Z$.
\end{ex}

We now address the description of $\WW(G)$ and $\FC(G)$. For simplicity, let us reduce the study of $\WW(G)$ and $\FC(G)$ to the case with no finite orbit.

\begin{prop}\label{w_h_w}
Suppose that $H$ has a single infinite orbit on $X$. Let $N$ be the kernel of the $H$-action on $X$. Define $C=\bigcap_{i\in B,b\in B}gA_ig^{-1}=\bigcap_{i\in I}\mathrm{Core}_B(A_i)$.

Then, for $G=B\wr_X^{\mathcal{M},\mathcal{A}}H$, we have $\WW(G)=C^X\rtimes (N\cap\WW(H))$ and $\FC(G)=C^X\rtimes (N\cap\FC(H))$.
\end{prop}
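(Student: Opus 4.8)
The plan is to reduce the statement to the already-proved Proposition \ref{wfc_wr}, by recognizing that the group $G=B\wr_X^{\mathcal{M},\mathcal{A}}H$, under the hypothesis that there is a single infinite orbit (and in particular no finite orbit), is sandwiched between two semirestricted wreath products of the form treated there. Concretely, since $\mathcal{M}$ is uniformly commensurated and $X$ consists of a single $H$-orbit, the uniform commensuration forces all but finitely many of the $M_i$ to be $H$-invariant (this is the $H$-orbit version of the remark preceding Proposition \ref{cg_fin}; here $X$ itself is an $H$-orbit, so an $H$-invariant $M_i$ must be either empty or all of $X$). Because $X$ is a single orbit, in fact only one $M_i$ can be nonempty once we discard empty ones, unless finitely many boundary points are shuffled; so the subgroup $B^{X,\mathcal{M},\mathcal{A}}$ differs from the plain semirestricted power $B^{X,A}$, where $A$ is a suitable one of the $A_i$'s, only by a finitely-supported modification. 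More usefully: set $A_{\min}=\bigcap_i A_i$ (a compact open subgroup since $I$ is at worst countable but the $M_i$ are eventually $H$-invariant so only finitely many matter) and $A_{\max}=\overline{\langle A_i:i\in I\rangle}$; then we have containments of closed subgroups $B^{X,A_{\min}}\supset B^{X,\mathcal{M},\mathcal{A}}\supset$ a finite-index-type comparison, inducing $B\wr_X^{A_{\min}}H\supseteq G\supseteq$ cocompact/normal-compact comparisons. The first step is to make this sandwich precise and note that $C=\bigcap_i\mathrm{Core}_B(A_i)=\mathrm{Core}_B(A_{\min})$ up to the usual care.

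Next I would invoke Proposition \ref{wfc_wr} for the flanking groups. For $B\wr_X^{A}H$ with $X$ having no finite orbit, that proposition gives $\WW=\mathrm{Core}_B(A)^X\rtimes(\WW(H)\cap N)$ and similarly for $\FC$. Applying this to $A=A_{\min}$ and to $A=A_{\max}$ pins down the two ``extremes''. Since $\mathrm{Core}_B(A_{\min})=C$ (because $\mathrm{Core}_B$ of an intersection equals the intersection of the $\mathrm{Core}_B$'s when the cores are normal in $B$ — they are, being normal subgroups of $B$), both extremes already have the same $C^X$ part, so the answer for $G$ is forced to be $C^X\rtimes(\text{something between }\WW(H)\cap N)$, and the same on top. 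What remains is to show the $H$-part is exactly $\WW(H)\cap N$ (resp.\ $\FC(H)\cap N$), not smaller and not larger.

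For the $H$-part I would argue directly, mimicking the structure of the proof of Proposition \ref{wfc_wr} but in the simpler single-orbit setting. For $\supseteq$: if $h\in N$, then $h$ centralizes $B^{X,\mathcal{M},\mathcal{A}}$ (since $h$ fixes $X$ pointwise), so its $G$-conjugacy class equals its $H$-conjugacy class; hence $\FC(H)\cap N\subset\FC(G)$, and for $\WW$ one uses that $K\cap N$ is normal in $G$ whenever $K\trianglelefteq H$ is compact, and $\WW(H)\cap N=\bigcup_K(K\cap N)$. Also $C^X$ is a compact normal subgroup of $G$ (it's $H$-invariant since $C$ is characteristic-enough and $X$ is $H$-stable, and it's normalized by $B^X$ because $C$ is normal in $B$), giving $C^X\subset\WW(G)$. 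For $\subseteq$: project $\pi:G\to H$; clearly $\WW(G)\subset\pi^{-1}(\WW(H))$ and $\FC(G)\subset\pi^{-1}(\FC(H))$. To get into $\pi^{-1}(N)$, suppose $fh\in\FC(G)$ with $h\notin N$, pick $x$ with $hx\neq x$, choose $b\in B$ with $b\notin A_i$ for the index $i$ with $x\in M_i$ (possible because $A_i\neq B$ on the relevant infinite block — here is where the single-infinite-orbit hypothesis and $A_i$ proper are used; if some $A_i=B$ on an infinite block we would be outside the interesting case, or rather $C$ and the statement absorb it), and compute the commutator $[fh,\delta_x(b)]$ as in Proposition \ref{wfc_wr} to see its $x$-coordinate is $b^{-1}\notin A_i$; letting $b$ range shows the conjugacy class of $fh$ has non-relatively-compact image in the $x$-coordinate (using that $B/A_i$ is infinite, i.e.\ $A_i$ has infinite index — again the ``proper, infinite block'' case; when $B/A_i$ is finite the block contributes to $C$ and one passes to the quotient by $C^X$), contradiction. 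Finally, for $f\in\WW(G)\cap B^{X,\mathcal{M},\mathcal{A}}$ (resp.\ $\FC(G)\cap\ldots$), conjugating by $\delta_x(b)$ as above forces $f(x)\in\bigcap_{b\in B}bA_ib^{-1}\cdot(\text{stuff})$, and running $x$ over all of $X$ (all in the one infinite orbit) together with the no-finite-orbit conclusion of Proposition \ref{wfc_wr} forces $f\in C^X$; combine with the known $H$-part to finish.

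The main obstacle I anticipate is the bookkeeping around the partition $\mathcal{M}$: making the reduction ``eventually $H$-invariant $M_i$'' genuinely legitimate in the single-infinite-orbit case (so that $\bigcap_i A_i$ is a compact open subgroup and the sandwich $B\wr_X^{A_{\min}}H\supseteq G$ is by a closed subgroup with the right cocompactness/normal-compact-quotient features), and then checking that the flanking computations really do squeeze out exactly $C^X\rtimes(\WW(H)\cap N)$ rather than something with a larger $H$-part. In particular I want to be careful that an $M_i$ with $A_i=B$ and $M_i$ infinite does not sneak in extra compact normal subgroups beyond $C^X$; the resolution is that such a block contributes $\mathrm{Core}_B(B)=B$ to some $\mathrm{Core}_B(A_i)$ but $C$ is the \emph{intersection} over all $i$, so as long as at least one block is a proper $A_i$ on an infinite part the constraint bites, and if \emph{every} block has $A_i$ with $B/A_i$ finite the whole thing is compact-by-discrete and the formula still reads correctly with $C=\bigcap_i\mathrm{Core}_B(A_i)$. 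I would handle this by first modding out by the compact normal subgroup $C^X$ (as in the proof of Proposition \ref{wfc_wr}), reducing to $C=1$, i.e.\ $\bigcap_i\mathrm{Core}_B(A_i)=1$, and proving $\WW(G)=N\cap\WW(H)$, $\FC(G)=N\cap\FC(H)$ in that normalized case.
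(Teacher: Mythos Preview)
Your sandwich strategy does not get off the ground. The containment you write, $B^{X,A_{\min}}\supset B^{X,\mathcal{M},\mathcal{A}}$, goes the wrong way: since $A_{\min}\subset A_i$ for every $i$, any function that is eventually in $A_{\min}$ is a fortiori in $B^{X,\mathcal{M},\mathcal{A}}$, so in fact $B^{X,A_{\min}}\subset B^{X,\mathcal{M},\mathcal{A}}$. There is in general no compact open $A\subset B$ with $B^{X,\mathcal{M},\mathcal{A}}\subset B^{X,A}$. More seriously, even a correct pair of inclusions would not pin down $\WW(G)$ or $\FC(G)$: these are not monotone under passage to closed subgroups or to overgroups, so one cannot ``squeeze'' them. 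The paper does not attempt any such reduction to Proposition~\ref{wfc_wr}; it argues directly.

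In your direct argument the two decisive steps are mishandled. For the inclusion $\FC(G)\cap B^{X,\mathcal{M},\mathcal{A}}\subset C^X$, conjugating $f$ by $\delta_x(b)$ only replaces $f(x)$ by $bf(x)b^{-1}$, which stays in the relatively compact $B$-conjugacy class of $f(x)$; this cannot produce an unbounded family of conjugates. The paper uses instead the transitivity of $H$ on $X$: after arranging (by a single $B$-conjugation) that $c=f(x)\notin A_i$ for a suitable $i$, one moves $x$ by elements of $H$ over infinitely many points of $M_i$, and these $H$-conjugates of $f$ leave every compact set. For the inclusion $\pi(\FC(G))\subset N$, your argument needs $B/A_i$ infinite, which is not assumed, and your proposed fix is wrong (finite index $[B:A_i]$ does not make $\mathrm{Core}_B(A_i)$ large; take $B$ non-abelian of order~$6$ and $A_i$ of order~$2$). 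The paper avoids this issue by bootstrapping from the previous step: if $fh\in\FC(G)$ with $hx\neq x$, then for any $b\neq 1$ the commutator $(fh)^{-1}\delta_x(b)(fh)\delta_x(b)^{-1}=\delta_{h^{-1}x}(c^{-1}bc)\delta_x(b^{-1})$ is a \emph{nontrivial} element of $\FC(G)\cap B^{X,\mathcal{M},\mathcal{A}}$, contradicting what was just shown (after the reduction to $C=1$). No hypothesis on $[B:A_i]$ is needed.
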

\begin{proof}
Since $C$ is compact normal in $B$, $C^X$ is compact normal in $G$, and is included in all the terms considered; hence we can suppose that $C=1$. 

The subgroup $N$ of $H$ is normal, and on $N$ the $H$- and $G$-conjugacy classes coincide. It immediately follows that $N\cap\WW(H)\subset\WW(G)$ and $N\cap\FC(H)\subset\FC(G)$. 

Let $\pi$ be the projection $G\to H$. Clearly $\pi(\WW(G))\subset\WW(H)$ and $\pi(\FC(G))\subset\WW(FC)$.

Let us show that $\FC(G)\cap B^{X,\mathcal{M},\mathcal{A}}=1$. By contradiction, let $f$ be a nontrivial element. Then $c=f(x)\neq 1$ for some $x$. Then there exists $i$ such that $c\notin\mathrm{Core}_B(A_i)$. Conjugating if necessary, we can suppose $c\notin A_i$. Then by transitivity, the $H$-conjugates of $f$ do not remain in a compact subset, and this is a contradiction.  

Let us show that $\pi(\FC(G))\subset N$. Let $fh$ be an element of $\FC(G)$ with $h\in H$ and $f\in B^{X,\mathcal{M},\mathcal{A}}$. If $\pi(h)\notin N$, there exists $x\in X$ such that $h(x)\neq x$. For any $b\in B$, let $\delta_x(b)$ be defined as in the proof of Proposition \ref{wfc_wr}. Then, writing $c=f(x)$ 
\[(fh)^{-1}\delta_x(b)fh\delta_x(b)^{-1}=h^{-1}\delta_x(c^{-1}bc)h\delta_{x}(b^{-1})=\delta_{h^{-1}x}(c^{-1}bc)\delta_{x}(b^{-1});\]
this is a nontrivial element of $\FC(G)\cap B^{X,\mathcal{M},\mathcal{A}}$, which is a contradiction.

Gathering everything, $\WW(G)$ is a subgroup whose projection to $H$ is equal to $\WW(H)\cap N$, it includes $\WW(H)\cap N$, and its intersection with $B^{X,\mathcal{M},\mathcal{A}}$ is trivial. Hence $\WW(G)=\WW(H)\cap N$. Similarly $\FC(G)=\FC(H)\cap N$.
\end{proof}

Let $\Gamma$ be the first Grigorchuk group and $\Lambda$ its subgroup as in Theorem \ref{bethm}. Then the Schreier graph $\Gamma/\Lambda$ is known to be 2-ended. Pick one half $M$. Denote by $C_2$ the cyclic group on 2 generators.

\begin{prop}\label{autreinter}
The embedding $C_2\wr_{\Gamma/\Lambda}\Gamma\to C_2\wr_{(\Gamma/\Lambda,M]}\Gamma$ has a dense image. In particular, $G=C_2\wr_{(\Gamma/\Lambda,M]}\Gamma$ has intermediate growth. It has $\WW(G)=\FC(G)=1$, and in particular is not compact-by-discrete.
\end{prop}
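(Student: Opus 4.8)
The plan is to establish the three assertions in sequence, each reducing to something already available. For the density of the embedding $C_2\wr_{\Gamma/\Lambda}\Gamma\to C_2\wr_{(\Gamma/\Lambda,M]}\Gamma$: since both groups contain $\Gamma$ with the same (discrete) topology and the semidirect product structure is respected, it suffices to show that $C_2^{(\Gamma/\Lambda)}$ is dense in $C_2^{(\Gamma/\Lambda,M]}=C_2^{(\Gamma/\Lambda\smallsetminus M)}\times C_2^{M}$. The compact open subgroup defining the topology on the target is $\{1\}^{\Gamma/\Lambda\smallsetminus M}\times C_2^{M}$ (taking $A_1=\{1\}$, $A_2=C_2$ with $M=M_2$), so a basic open neighborhood of a point is obtained by fixing finitely many coordinates. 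Any such coordinate-finite condition is met by a finitely supported function, so $C_2^{(\Gamma/\Lambda)}$ is dense; this is essentially the observation that finitely supported functions are dense in the product $\prod_M C_2$ for the product topology.

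Next, intermediate growth of $G=C_2\wr_{(\Gamma/\Lambda,M]}\Gamma$. The group $C_2\wr_{\Gamma/\Lambda}\Gamma=\Delta$ is the Bartholdi--Erschler group of intermediate growth (invoked in the proof of Theorem \ref{bethm}, with $F=C_2$), and it embeds as a \emph{dense} subgroup of $G$ by the first step. Since $C_2$ is finite, the compact open subgroup $\{1\}^{\Gamma/\Lambda\smallsetminus M}\times C_2^{M}$ is, as a set, $C_2^M$, i.e.\ it is the closure of a point-stabilizer-type subgroup already contained in $\Delta$'s completion; more concretely, $\Delta$ is cocompact in $G$ (its image together with the compact subgroup $C_2^M$ covers $G$, because every element of $B^{(X,M]}$ differs from a finitely supported function by an element of $C_2^M$). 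A cocompact closed — here dense with cocompact ``complement'' — embedding, more precisely the fact that $\Delta$ is a cocompact lattice-like dense subgroup, gives a quasi-isometry between $\Delta$ and $G$ via a Cayley--Abels graph argument: $G$ acts properly cocompactly on a locally finite graph on which $\Delta$ also acts cocompactly, so $G$ and $\Delta$ have the same growth type. Hence $G$ has intermediate growth. (One should double-check that $\Delta\hookrightarrow G$ is proper and cocompact rather than merely dense; properness is automatic since $\Delta$ is discrete and the topology on $G$ restricts to the discrete one on $\Delta\cap(C_2^M\rtimes\text{stab})$, and cocompactness follows from $G=\Delta\cdot C_2^M$.)

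Finally, $\WW(G)=\FC(G)=1$. The Schreier graph of $\Gamma$ on $\Gamma/\Lambda$ is $2$-ended, so $\Gamma/\Lambda$ is a single infinite $\Gamma$-orbit; thus Proposition \ref{w_h_w} applies with $H=\Gamma$, $X=\Gamma/\Lambda$, $B=C_2$. It gives $\WW(G)=C^X\rtimes(N\cap\WW(\Gamma))$ and $\FC(G)=C^X\rtimes(N\cap\FC(\Gamma))$, where $N$ is the kernel of the $\Gamma$-action on $\Gamma/\Lambda$ and $C=\bigcap_{i\in I}\mathrm{Core}_{C_2}(A_i)$. Since $A_1=\{1\}$ we have $\mathrm{Core}_{C_2}(A_1)=\{1\}$, so $C=1$ and $C^X=1$. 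And $\Gamma$ acts faithfully on $\Gamma/\Lambda$ — because $\Lambda$ has trivial core in $\Gamma$, the Grigorchuk group being just infinite (every nontrivial normal subgroup has finite index, while $\Lambda$ has infinite index), so $N=1$. Therefore $\WW(G)=\FC(G)=1$; in particular $G$ has no nontrivial compact normal subgroup, hence no compact open normal subgroup, so $G$ is not compact-by-discrete.

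\emph{Main obstacle.} The delicate point is the growth step: one must argue carefully that the dense embedding $\Delta\hookrightarrow G$ is simultaneously proper (so $\Delta$ is discrete in $G$) and cocompact, in order to transfer the growth type, since ``dense'' and ``cocompact'' sound contradictory and the resolution uses that the cocompact piece $C_2^M$ is compact. Verifying $G=\Delta\cdot C_2^M$ and that the $\Delta$-action on a Cayley--Abels graph of $G$ is cocompact with finite stabilizers is where the real content lies; the remaining two assertions are essentially bookkeeping with Proposition \ref{w_h_w} and just-infiniteness of $\Gamma$.
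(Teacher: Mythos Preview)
Your density argument and your computation of $\WW(G)=\FC(G)=1$ via Proposition~\ref{w_h_w} are fine (and in fact more detailed than the paper's one-line citation).

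The growth step, however, contains a genuine error. You try to argue that $\Delta=C_2\wr_{\Gamma/\Lambda}\Gamma$ sits in $G$ as a proper cocompact subgroup, i.e.\ as a cocompact lattice, and then transfer growth by quasi-isometry. This is impossible: you have already shown that the image of $\Delta$ is \emph{dense}, and $G$ is non-discrete, so a proper (hence closed) embedding with dense image would force $\Delta=G$. Concretely, the intersection of $\Delta$ with the compact open subgroup $K=C_2^{M}$ is $C_2^{(M)}$, the finitely supported functions on the infinite set $M$; this is an infinite group, so the induced topology on $\Delta$ is not discrete. (The paper itself remarks, just after this proposition, that unlike the construction in Theorem~\ref{bethm} these groups are \emph{not} built with a cocompact lattice; that is precisely what makes them candidates for Question~\ref{q_intg}(\ref{nocom}).) So the Cayley--Abels/quasi-isometry mechanism you invoke does not apply in the form you state it.

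The paper's route is different and does not use any lattice. From density alone one gets an \emph{upper} bound on the growth of $G$: since $\Delta K=G$, the surjection $\Delta\to G/K$, $\delta\mapsto\delta K$, is $1$-Lipschitz from the Cayley graph of $\Delta$ onto a Cayley--Abels graph of $G$, so the volume growth of $G$ is dominated by that of $\Delta$, hence subexponential by Bartholdi--Erschler. The \emph{lower} bound (non-polynomial) comes not from $\Delta$ but from the quotient map $G\twoheadrightarrow\Gamma$: since $\Gamma$ has superpolynomial growth, so does $G$. Combining the two gives intermediate growth. Your write-up would be repaired by replacing the lattice/quasi-isometry paragraph with this two-sided estimate.
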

\begin{proof}
The density is clear and the only nontrivial point is Bartholdi-Erschler's theorem that the left-hand discrete group has intermediate growth. It follows that the right-hand group has subexponential growth (bounded above by that of the discrete one). It does not have polynomial growth, since its quotient $\Gamma$ does not. It has $\FC(G)=1$, by Proposition \ref{w_h_w}.\end{proof}

This example is motivated by Question \ref{q_intg}(\ref{nocom}): unlike the examples in the proof of Theorem \ref{bethm}, which by construction have a cocompact lattice, these ones do not a priori (note that this is not a single group: several examples are provided in \cite{BE} and also the choice of $M$ rather or its complement could a priori matter).

\begin{prop}\label{cg_fin}
In the setting above (beginning of the section), if $H$ is compactly generated (or more generally, has {\bf uncountable cofinality}, in the sense that it is not the union of a properly increasing sequence of subgroups), then all but finitely many of the $M_i$ are $H$-invariant.
\end{prop}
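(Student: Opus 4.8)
The plan is to prove directly that the set $J=\{i\in I: M_i\text{ is not }H\text{-invariant}\}$ is finite. The starting point is that uniform commensuration forces the function $h\mapsto I_h:=\{i\in I: hM_i\neq M_i\}$ to take finite values, since $\#I_h\le\sum_i\#(M_i\bigtriangleup hM_i)<\infty$. I would then argue by contradiction: assume $J$ is infinite and fix pairwise distinct elements $j_1,j_2,\dots$ of $J$.

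Next I would build an exhausting chain of proper subgroups of $H$. For $n\ge 1$ put
\[K_n=\bigcap_{k>n}\mathrm{Stab}_H(M_{j_k}),\]
a subgroup with $K_n\subseteq K_{n+1}$. Each $K_n$ is proper: since $j_{n+1}\in J$, the set $M_{j_{n+1}}$ is not $H$-invariant, so $\mathrm{Stab}_H(M_{j_{n+1}})\neq H$, hence $K_n\neq H$. The key point is that $\bigcup_n K_n=H$: given $h\in H$, the set $I_h$ is finite and the $j_k$ are distinct, so $\{k: hM_{j_k}\neq M_{j_k}\}=\{k: j_k\in I_h\}$ is finite, hence bounded by some $N$, and then $h\in K_N$. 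I would also record that each $\mathrm{Stab}_H(M_{j_k})$, and therefore each $K_n$, is \emph{closed}: if $h_\alpha\to h$ with $h_\alpha M_{j_k}=M_{j_k}$ for all $\alpha$, then for each $x\in X$ we have $h_\alpha x=hx$ for $\alpha$ large (the action is continuous and $X$ discrete), so $x\in M_{j_k}\iff hx\in M_{j_k}$, whence $hM_{j_k}=M_{j_k}$.

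It then remains to contradict the existence of a weakly increasing sequence $(K_n)$ of proper closed subgroups with $\bigcup_n K_n=H$. If $H$ has uncountable cofinality this is immediate: $(K_n)$ cannot be eventually constant (otherwise $H=K_N$ for some $N$, absurd), so it admits a strictly increasing subsequence whose union is still $H$, contradicting the definition. If instead $H$ is (only) assumed compactly generated, I would use that it is then a $\sigma$-compact, hence Baire, locally compact group: from $H=\bigcup_n K_n$ with the $K_n$ closed, Baire's theorem yields some $n_0$ with $K_{n_0}$ non-meager, hence with nonempty interior, hence open; then $K_n$ is open for all $n\ge n_0$, and covering a compact generating set $Q$ of $H$ by the nested open family $(K_n)_{n\ge n_0}$ gives $Q\subseteq K_N$ for some $N$, so $H=\langle Q\rangle=K_N$ — again absurd. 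In either case $J$ is finite.

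The verifications that $I_h$ is finite and that $\bigcup_n K_n=H$ are routine; the one point requiring care is the compactly generated case, where one genuinely needs the $K_n$ to be closed (so that Baire produces an \emph{open} subgroup) before invoking compactness of a generating set — this is the step where ``compactly generated'' is used rather than the formally weaker ``uncountable cofinality''.
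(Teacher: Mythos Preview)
Your argument is correct. The one cosmetic slip is the phrase ``$\sigma$-compact, hence Baire'': the Baire property of $H$ comes from local compactness, not from $\sigma$-compactness; this does not affect the proof.

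Your route differs from the paper's. The paper does not argue directly: it encodes the whole family $(M_i)_{i\in I}$ as a single commensurated subset $M=\bigcup_{i\in I} M_i\times\{i\}$ of the $H$-set $X\times I$ (with $H$ acting trivially on the $I$-factor), observes that uniform commensuration of $(M_i)$ is exactly the statement that $M$ is commensurated, and then invokes \cite[Prop.~4.B.2]{CFW}, which says that for a group that is compactly generated (or of uncountable cofinality) a commensurated subset meets only finitely many orbits in a non-invariant set; since the orbits in $X\times I$ are the $Y\times\{i\}$ with $Y$ an $H$-orbit in $X$, this gives the conclusion. Your proof, by contrast, is self-contained: you essentially unwind what that cited proposition amounts to here, exhibiting the exhausting chain of closed stabilizer intersections and appealing to cofinality directly, or in the compactly generated case to Baire category plus compactness of a generating set. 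The paper's version is shorter on the page but outsources the substance; yours makes the mechanism visible and avoids the external reference.
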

\begin{proof}
Define $X'=X\times\{i\}$, with the component-wise action of $H$. Define $M=\bigcup_{i\in I}M_i\times\{i\}$. Then $M$ is commensurated by the $H$-action. Since $G$ is compactly generated, $M$ intersects only finitely many orbits in a non-invariant subset \cite[Prop.\ 4.B.2]{CFW}. Hence $M\cap (X\times\{i\})$ is $H$-invariant for all but finitely many $i$, and this precisely means that $M_i$ is $H$-invariant.
\end{proof}

\end{document}